\newtheorem{thm}{Theorem}[section]
\newtheorem{pro}[thm]{Proposition}
\newtheorem{cor}[thm]{Corollary}
\newtheorem{lem}[thm]{Lemma}
\newtheorem{conj}[thm]{Conjecture}
\newtheorem{rem}[thm]{Remark}
\newtheorem{ques}[thm]{Question}
\newcommand{\comp}{\, {}_\circ \,}
\newcommand{\noin}{\noindent}
\newcommand{\FF}{{\mathbb{F}}}
\newcommand{\openbin}{\left( \!\! \begin{array}{c}}
\newcommand{\closebin}{\end{array} \!\! \right)}
\newcommand{\openvec}{\left[ \!\! \begin{array}{c}}
\newcommand{\closevec}{\end{array} \!\! \right]}
\newcommand{\openmat}{\left[ \!\! \begin{array}{cc}}
\newcommand{\closemat}{\end{array} \!\! \right]}
\newcommand{\opentri}{\left[ \!\! \begin{array}{ccc}}
\newcommand{\closetri}{\end{array} \!\! \right]}
\newcommand{\openquad}{\left[ \!\! \begin{array}{cccc}}
\newcommand{\closequad}{\end{array} \!\! \right]}
\newcommand{\openquin}{\left[ \!\! \begin{array}{ccccc}}
\newcommand{\closequin}{\end{array} \!\! \right]}
\newcommand{\opensex}{\left[ \!\! \begin{array}{cccccc}}
\newcommand{\closesex}{\end{array} \!\! \right]}
\newcommand{\dash}{\mbox{\rm{--}}}
\newcommand{\End}{\mbox{\rm{End}}}
\newcommand{\Ind}{\mbox{\rm{Ind}}}
\newcommand{\Res}{\mbox{\rm{Res}}}
\newcommand{\br}{\mbox{\rm{br}}}
\newcommand{\cod}{\mbox{\rm{cod}}}
\newcommand{\dom}{\mbox{\rm{dom}}}
\newcommand{\id}{{\rm{id}}}
\newcommand{\tr}{\mbox{\rm{tr}}}
\newcommand{\oo}{\overline}
\newcommand{\uu}{\underline}
\newcommand{\cF}{{\cal F}}
\newcommand{\cI}{{\cal I}}
\newcommand{\cO}{{\cal O}}
\newcommand{\cU}{{\cal U}}
\newcommand{\cV}{{\cal V}}
\begin{document}


\title{Conjectural invariance with respect to \\
the fusion system of an almost-source algebra}

\author{\large Laurence
Barker\footnote{e-mail: barker@fen.bilkent.edu.tr.
Some of this work was done while this author was
on sabbatical leave, visiting the Department of
Mathematics at City, University of London.}
\hspace{1in} Matthew Gelvin\footnote{e-mail:
mgelvin@gmail.com.} \\
\mbox{} \\
Department of Mathematics \\
Bilkent University \\
06800 Bilkent, Ankara, Turkey \\
\mbox{}}

\maketitle

\small
\begin{abstract}
\noin We show that, given an almost-source
algebra $A$ of a $p$-block of a finite group
$G$, then the unit group of $A$ contains a basis
stabilized by the left and right multiplicative
action of the defect group if and only if, in a
sense to be made precise, certain relative
multiplicities of local pointed groups are
invariant with respect to the fusion system.
We also show that, when $G$ is $p$-solvable,
those two equivalent conditions hold for some
almost-source algebra of the given $p$-block.
One motive lies in the fact that, by a
theorem of Linckelmann, if the two equivalent
conditions hold for $A$, then any stable
basis for $A$ is semicharacteristic for the
fusion system.

\smallskip
\noin {\it 2020 Mathematics Subject Classification:}
20C20.

\smallskip
\noin {\it Keywords:} characteristic biset, source
algebra, fusion system, $p$-solvable group

\end{abstract}

\section{Introduction}
\label{1}

In $p$-local representation theory of
finite groups, when dealing with an
object $P$ of a fusion system, we
often consider attributes of $P$ that
are not invariants of $P$. That is, we
often consider conditions on $P$,
constructions determined by $P$,
numbers associated with $P$ that
are not isomorphism invariants with
respect to the fusion system. The
two conjectures below imply that
certain attributes of $P$ are, despite
appearances, invariants after all.

To better explain what we mean, we
must first set the scene. Throughout,
we let $\cO$ be a complete local
Noetherian commutative unital ring with
an algebraically closed residue field
$\FF$ of prime characteristic $p$. The
hypothesis on $\cO$ implies that either
$\cO = \FF$ or else $\cO$ is a
complete discrete valuation ring. We
understand any $\cO$-module, and in
particular any algebra over $\cO$, to be
finitely generated and free over $\cO$
or over $\FF$. We understand a
{\bf basis} for an $\cO$-module to
be an $\cO$-basis or an $\FF$-basis.
A basis $\Omega$ for an algebra over
$\cO$ is said to be {\bf unital} provided
every element of $\cO$ is a unit.

Throughout, we let $G$ be a finite group.
We deem $G$-algebras to be over $\cO$.
We presume familiarity with the theory of
$G$-algebras, Brauer pairs, almost-source
algebras, fusion systems, as discussed in
Linckelmann \cite[Chapters 5, 6, 8]{Lin18}.
We shall freely make use of much notation
and terminology from there. However,
adopting two definitions in Craven
\cite[4.1, 4.11]{Cra11}, we understand the
term {\it fusion system} in the broad sense,
as distinct from {\it saturated fusion system}.

For a finite $G {\times} G$-set $\Gamma$,
the image of $c \in \Gamma$, under the
action of $(f, g) \in G {\times} G$, can be
written as $(f, g) c$ or as $f c g^{-1}$.
When the latter notation prevails, we call
$\Gamma$ a {\bf $(G, G)$-biset}. Regard
$G$ as a $(G, G)$-biset by left and right
multiplication. Extending $\cO$-linearly,
$\cO G$ becomes a permutation
$\cO(G {\times} G)$-module.

The scenario of main concern in this paper
is as follows. Let $b$ be a block of $\cO G$.
Let $D$ be a defect group of the block algebra
$\cO G b$. The notion of an almost-source
algebra and its associated fusion system will
be recalled in Section \ref{7}. Let $A$ be an
almost-source $D$-algebra of $\cO G b$.
Let $\cF$ be the fusion system on $D$
associated with $A$. By Green's
Indecomposability Criterion, $A$ is a
permutation $\cO(D {\times} D)$-submodule
of $\cO G$. Let $\Omega$ be a
$D {\times} D$-stable basis for $A$. By the
Krull--Schmidt Theorem, the $(D, D)$-biset
$\Omega$ is well-defined up to isomorphism.

A result of Linckelmann, Theorem
\ref{7.2} below, describes how $\cF$
is determined by $\Omega$. One
clause of the description involves a
condition that is not $\cF$-isomorphism
invariant. Theorem \ref{7.5} asserts
that the condition can be omitted when
$b$ is the principal block of $\cO G$.
Most of this paper, though, is concerned
with some apparently stronger
properties of $A$ and $\Omega$.

The notion of an $\cF$-semicharacteristic
biset is defined in Gelvin--Reeh \cite{GR15}.
We shall recall the definition in Section
\ref{6}. The next result is presented in
\cite[8.7.11]{Lin18} for source algebras,
but the proof carries over immediately to
almost-source algebras.

\begin{thm}
\label{1.1}
{\rm (Linckelmann.)}
Let $b$ be a block of $\cO G$. Let $D$
be a defect group of the block algebra
$\cO G b$. Let $A$ be an almost-source
$D$-algebra of $\cO G b$. Write $\cF$
for the fusion system on $D$ associated
with $A$. If $A$ has a unital
$D {\times} D$-stable basis $\Omega$,
then the $(D, D)$-biset $\Omega$ is
$\cF$-semicharacteristic. If, furthermore,
$A$ is a source algebra, then $\Omega$
is $\cF$-characteristic.
\end{thm}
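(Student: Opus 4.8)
Recall from Section~\ref{6} that for a $(D,D)$-biset $\Omega$ to be $\cF$-semicharacteristic amounts to three conditions: that $\Omega$ be $\cF$-generated, that $\Omega$ be $\cF$-stable, and that $|\Omega|/|D|$ be prime to $p$; being $\cF$-characteristic strengthens the last condition to the statement that the ``regular'' orbit $(D{\times}D)/\Delta(D,\id)$ occurs in $\Omega$ with multiplicity $1$ modulo $p$. The plan is to verify these in turn, using the description of the fusion system of an almost-source algebra recalled in Section~\ref{7} and, decisively, the hypothesis that every member of $\Omega$ is a unit of $A$.

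First, $\Omega$ is $\cF$-generated; this is where unitality enters. For $\omega\in\Omega$ the stabilizer in $D{\times}D$ is $\{(u,v) : u\omega=\omega v\}$, which, $\omega$ being invertible, equals $\Delta(Q_\omega,\gamma_\omega)=\{({}^\omega v,v) : v\in Q_\omega\}$ with $Q_\omega=D\cap{}^{\omega^{-1}}\!D$ and $\gamma_\omega\colon Q_\omega\to D$ the injective map $v\mapsto{}^\omega v$. Since conjugation by a unit of $A$ carrying a subgroup of $D$ into $D$ is, by the description of the fusion system of an almost-source algebra, a morphism of $\cF$, we get $\gamma_\omega\in\Hom_\cF(Q_\omega,D)$; hence every $D{\times}D$-orbit of $\Omega$ is isomorphic to $(D{\times}D)/\Delta(Q_\omega,\gamma_\omega)$ with twist in $\cF$. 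Next, $\Omega$ is $\cF$-stable: given $\varphi\in\Hom_\cF(Q,D)$, the same description produces $a\in A^\times$ with ${}^aQ\le D$ and $\varphi=c_a|_Q$; then left multiplication by $a$ is a right $\cO D$-linear bijection of $A$ intertwining the left $Q$-action along the inclusion with the left $Q$-action along $\varphi$, since $a(qx)=({}^aq)(ax)$. As both sides are permutation $\cO(Q{\times}D)$-modules on the basis $\Omega$, and isomorphic permutation modules have isomorphic underlying bisets, the two restrictions of $\Omega$ to a $(Q,D)$-biset are isomorphic; right multiplication by $a^{-1}$ handles the right action symmetrically.

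It remains to treat the numerical condition. Writing $\Omega=\coprod_i(D{\times}D)/\Delta(Q_i,\gamma_i)$, the number of left $D$-orbits, $|\Omega|/|D|=\sum_i[D:Q_i]$, is modulo $p$ the number of $i$ with $Q_i=D$, every proper index being divisible by $p$. By the $\cF$-stability just proved, applied to $\Aut_\cF(D)$, the number of such $i$ whose twist lies in a prescribed coset of $\Inn(D)$ does not depend on the coset; hence that number equals $|\Out_\cF(D)|$ times the multiplicity $m$ of the regular orbit, and $m=|Z(D)|^{-1}\dim_\FF A(\Delta D)$, where $A(\Delta D)$ is the Brauer quotient of the interior $D$-algebra $A$: indeed the $\Delta D$-fixed elements of $\Omega$ are the units of $\Omega$ centralizing $D$, each orbit of $\Omega$ isomorphic to the regular one contains exactly $|Z(D)|$ of them, and these account for all of $\Omega^{\Delta D}$, whose size is $\dim_\FF A(\Delta D)$. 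Since the block fusion system $\cF$ is saturated, $|\Out_\cF(D)|$ is prime to $p$, so the numerical condition reduces entirely to understanding $m$, that is $\dim_\FF\bigl(\mathrm{Br}_D(i)\cdot\FF C_G(D)\mathrm{Br}_D(b)\cdot\mathrm{Br}_D(i)\bigr)$.

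The main obstacle is exactly this point. When $A$ is a source algebra, $\mathrm{Br}_D(i)$ is conjugate to a primitive idempotent of the block $\FF C_G(D)\mathrm{Br}_D(b)$ of $\FF C_G(D)$, whose defect group $Z(D)$ is central and hence which is a matrix algebra over $\FF Z(D)$; consequently $A(\Delta D)\cong\FF Z(D)$, the regular orbit has multiplicity $m=1$, and $\Omega$ is $\cF$-characteristic. For a general almost-source algebra $m$ may exceed $1$, and the content of the theorem is that the unital hypothesis forces $m$, equivalently $|\Omega|/|D|$, to be prime to $p$: this is where one must exploit the existence of a basis of units against the idempotent $i$, and is the step I expect to require genuine work rather than formal manipulation. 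Finally, when $\cO$ is a discrete valuation ring rather than $\FF$, one checks that reduction modulo the maximal ideal preserves all the permutation-biset data in play, so the counting above may be carried out over $\FF$ throughout.
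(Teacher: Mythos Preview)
Your proposal rests on a misreading of the definitions in Section~\ref{6}. In this paper, following Gelvin--Reeh, an $\cF$-\emph{semicharacteristic} bifree $(D,D)$-biset is one for which (i) $\Omega^{\Delta(\phi)}\neq\emptyset$ if and only if $\phi$ is an $\cF$-isomorphism, and (ii) $|\Omega^{\Delta(\cod(\phi))}|=|\Omega^{\Delta(\phi)}|=|\Omega^{\Delta(\dom(\phi))}|$ whenever $\phi$ is an $\cF$-isomorphism. There is \emph{no} coprimality condition on $|\Omega|/|D|$; that condition is precisely what distinguishes $\cF$-\emph{characteristic} from $\cF$-semicharacteristic. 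So the ``main obstacle'' you flag---forcing $|\Omega|/|D|$ to be prime to $p$ in the almost-source case---is not part of what must be proved there. Your arguments for $\cF$-generation and for the fixed-point equalities already cover the semicharacteristic assertion, and your computation that $A(\Delta D)\cong\FF Z(D)$, whence $m=1$, is exactly what is needed for the characteristic assertion in the source-algebra case. (Your stated definition of characteristic as ``multiplicity of the regular orbit is $1$ modulo $p$'' is also stronger than what the paper uses, but since you prove $m=1$ on the nose this does no harm.)

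There is, however, a genuine gap in your $\cF$-stability step. You write that ``the same description produces $a\in A^\times$ with ${}^aQ\le D$ and $\varphi=c_a|_Q$''. The description of $\cF$ recalled in Section~\ref{7} realizes $\cF$-isomorphisms as conjugations by elements of $G$, not by units of $A$; passing from $g\in G$ to a unit of $A=\ell\,\cO G\,\ell$ is not automatic. Theorem~\ref{7.2} together with the unitality of $\Omega$ does produce such a unit, but only when $\dom(\varphi)$ or $\cod(\varphi)$ is fully $\cF$-centralized. To handle an arbitrary $\cF$-isomorphism one must invoke Alperin's Fusion Theorem to factor $\varphi$ through automorphisms of $\cF$-centric subgroups and then multiply the resulting units; this is precisely the content of Proposition~\ref{8.1}. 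Once that is in place, your left-multiplication argument and the paper's proof of Theorem~\ref{6.4} are essentially the same idea: a unit $u\in A^\times\cap A^\phi$ gives a bijection $\Omega\to u\Omega\cong\Omega$ carrying $\Omega^{\Delta(Q)}$ to $\Omega^{\Delta(\phi)}$, yielding the required equalities of fixed-point counts.
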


The weaker of the two conjectures below
asserts that some almost-source algebra
of $\cO G b$ does have a stable basis.
The stronger asserts that the source
algebras of $\cO G b$ have stable bases.
The next theorem gives two necessary
and sufficient criteria for the stability
hypothesis.

We understand an {\bf idempotent
decomposition} in a ring to be a sum
of finitely many mutually orthogonal
idempotents. When the summands are
primitive idempotents, we call the sum a
{\bf primitive idempotent decomposition}.
Given a pointed group $U_\mu$ on any
$G$-algebra $A$, we define the
{\bf multiplicity} of $U_\mu$, denoted
$m_A(U_\mu)$, to be the number of
elements of $\mu$ appearing in a
primitive idempotent decomposition
of the unity element of the $U$-fixed
subalgebra $A^U$. Given a pointed group
$T_\tau$ on $A$ with $T \leq U$, we
define the {\bf relative multiplicity} of
$T_\tau$ in $U_\mu$, denoted
$m_A(T_\tau, U_\mu)$, to be the
number of elements of $\tau$ that appear
in a primitive idempotent decomposition
of $i$ in the algebra $A^T$, where $i$ is
any element of $\mu$. Observe that, given
a pointed group $U_\mu$ on $A$ and
$U \leq V \leq G$ then, by considering
a primitive idempotent decomposition in
$A^V$ and refining to a primitive
idempotent decomposition in $A^U$,
we obtain
$$m_A(U_\mu) = \sum_\nu
  m_A(U_\mu, V_\nu) \, m_A(V_\nu)$$
where $\nu$ runs over the points of $V$
on $A$. Similarly, given pointed groups
$U_\mu$ and $W_\omega$ on $A$ and
$U \leq V \leq W$, then
$$m_A(U_\mu, W_\omega) =
  \sum_\nu m_A(U_\mu, V_\nu)
  \, m_A(V_\nu, W_\omega)$$
with $\nu$ running as before.

Now suppose $A$ is an interior
$G$-algebra. Let $U_\mu$ and
$V_\nu$ be pointed groups on
$A$. Suppose $U \cong V$ and let
$\phi : V \rightarrow U$ be a group
isomorphism. We call $\phi$ an
{\bf isofusion} $V_\nu \rightarrow U_\mu$
provided, in the sense of
Puig \cite[Section 2]{Pui86}, $\phi$ is an
``$A$-fusion'' $V_\nu \rightarrow U_\mu$.
In Section \ref{4}
we shall reviewing that notion, and
we shall also be noting some further
characterizations of isofusions.

\begin{thm}
\label{1.2}
Let $b$ be a block of $\cO G$ with defect
group $D$ and almost-source $D$-algebra
$A$. Write $\cF$ for the fusion system on
$D$ associated with $A$. Then the following
three conditions are equivalent.

\noin {\bf (a)} For every $\cF$-isomorphism
$\phi : Q \rightarrow P$, there is a bijective
correspondence between the local points
$\gamma$ of $P$ on $A$ and the local
points $\delta$ of $Q$ on $A$ whereby
$\gamma \leftrightarrow \delta$ if and only
if $\phi$ is an isofusion $Q_\delta \rightarrow
P_\gamma$. Furthermore,
when $\gamma \leftrightarrow \delta$, we
have $m_A(P_\gamma) = m_A(Q_\delta)$.

\noin {\bf (b)} For every $\cF$-isomorphism
$\phi : Q \rightarrow P$, there is a bijective
correspondence between the points $\alpha$
of $P$ on $A$ and the points $\beta$ of $Q$
on $A$ whereby $\alpha \leftrightarrow
\beta$ if and only if $\phi$ is an isofusion
$Q_\beta \rightarrow P_\alpha$. Furthermore,
when $\alpha \leftrightarrow \beta$, we
have $m_A(P_\alpha) = m_A(Q_\beta)$

\noin {\bf (c)} There exists a unital
$D {\times} D$-stable basis for $A$.
\end{thm}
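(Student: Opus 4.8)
The plan is to establish the cycle of implications (c) $\Rightarrow$ (b) $\Rightarrow$ (a) $\Rightarrow$ (c); the first two are comparatively soft and the third carries all the weight.

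For (c) $\Rightarrow$ (b) I would take a unital $D\times D$-stable basis $\Omega$ and apply Theorem \ref{1.1} to conclude that $\Omega$ is $\cF$-semicharacteristic. From the definition reviewed in Section \ref{6} and Linckelmann's description (Theorem \ref{7.2}) of $\cF$ through the transitive summands of $\Omega$, one extracts that $\Omega^{\Delta_\phi}$ is non-empty for every $\cF$-isomorphism $\phi : Q \to P$, that is, there is $\omega \in \Omega$ with $\omega q = \phi(q)\,\omega$ for all $q \in Q$, reading $Q, P$ inside $A^\times$ via the interior structure. Because $\Omega$ is unital, $\omega$ is a \emph{global} unit of $A$, so conjugation by $\omega$ is an $\cO$-algebra automorphism of $A$ fixing $1_A$ and carrying $A^Q$ onto $A^P$; since $\omega$ conjugates $Q$ onto $P$ and each $R \leq Q$ onto $\phi(R)$, it commutes with the relevant relative traces and Brauer homomorphisms. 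Hence it restricts to a bijection between the points of $Q$ on $A$ and the points of $P$ on $A$, taking local points to local points and preserving $m_A$; writing $\beta \leftrightarrow \alpha$ for paired points, $\omega$ witnesses that $\phi$ is an isofusion $Q_\beta \to P_\alpha$. Conversely, any unit realising $\phi$ differs from $\omega$ by an element of $C_{A^\times}(Q) = (A^Q)^\times$, which stabilises each point of $Q$ setwise, so the correspondence coincides with the isofusion relation; applying the same argument to $\phi^{-1}$ shows it is bijective. That is (b). The implication (b) $\Rightarrow$ (a) is then just the restriction to local points: an isofusion $Q_\delta \to P_\gamma$ is realised by a unit of $A$ which, as above, commutes with Brauer homomorphisms, so $\gamma$ is local precisely when $\delta$ is; the bijection of (b) thus restricts, and the multiplicity equality is inherited.

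The work lies in (a) $\Rightarrow$ (c), which I would carry out in two stages. First, upgrade (a) to (b): every point $\alpha$ of $P$ has a local defect pointed group $R_\delta$ with $R \leq P$, unique up to $P$-conjugacy, at which $m_A(R_\delta, P_\alpha)$ is invertible in $\FF$, and conversely the relative multiplicities from local pointed groups, together with the two displayed multiplicity identities above, recover all of the points; transporting the defect data of $\alpha$ through $\phi$ by means of the local-point correspondence of (a) at the subgroup $R$ (where $\phi$ restricts to an $\cF$-isomorphism of smaller groups) then reconstitutes the full point correspondence and the equality $m_A(P_\alpha) = m_A(Q_\beta)$. Next, I would note that (b) — combined with the standard fact that each $\cF$-isomorphism is realised as an isofusion of \emph{some} pair of local pointed groups — lets one glue the resulting local intertwining elements along a primitive decomposition of $1_A$ in $A^Q$ into a \emph{global} unit of $A$ realising any prescribed $\cF$-isomorphism.

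It remains, and this is the main obstacle, to assemble these units into a unital $D\times D$-stable basis. By the Krull--Schmidt theorem the $(D,D)$-biset underlying any stable basis of $A$ is determined by the ranks of the Brauer quotients $A(\Delta)$ as $\Delta$ runs over the twisted diagonal $p$-subgroups $\{(\psi(r),r) : r \in R\}$ of $D \times D$, and Puig's local theory rewrites these ranks in terms of the multiplicities $m_A$ of local pointed groups and the isofusions among them. The plan is to choose, for each transitive summand $D\times_{(\psi,R)}D$ that occurs, a $D\times D$-orbit of global units of $A$ realising $\psi$ — built from the intertwining elements supplied above together with source idempotents and their $D\times D$-translates — and to arrange these choices $D\times D$-equivariantly so that the resulting stable set of units is $\cO$-linearly independent and totals $A$. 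Condition (b)/(a) is exactly the symmetry that should make such a coherent choice possible, and I expect the proof to run by induction on the index $|D:R|$, peeling off first the summands with $R = D$ (which record the local points of $D$ on $A$ and the automiser $\Aut_\cF(D)$) and then descending while maintaining the stability constraint; getting this descent to close, matching multiplicities summand by summand against the $D\times D$-orbit structure of $A^\times$, is where the real difficulty will be.
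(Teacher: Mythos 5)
Your implications (c) $\Rightarrow$ (b) $\Rightarrow$ (a) are in the right spirit, but (b) $\Rightarrow$ (a) as written does not go through: the unit $r$ supplied by the definition of an isofusion (as in Lemma~\ref{4.1}(a)) satisfies $\phi(v)\,i = {}^{r}(vj)$, which is weaker than $r \in A^\phi$. Conjugation by such an $r$ does not intertwine $A^Q$ with $A^P$ globally, only the corners $jA^Qj$ and $iA^Pi$, so the claim that it ``commutes with the Brauer homomorphisms'' is unjustified. To see that local points correspond to local points you need a separate argument---either via the localized multiplication on Brauer quotients (one can show $\br_P(i) \neq 0 \Leftrightarrow \br_Q(j) \neq 0$ directly from a witness, as in the proof of Lemma~\ref{4.5}), or by first establishing the unit condition $A^\times \cap A^\phi \neq \emptyset$ (Theorem~\ref{5.2}) and then running your (c)$\Rightarrow$(b)-type conjugation. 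The paper instead derives the equivalence (a)$\Leftrightarrow$(b) abstractly from the equivalences in Theorems~\ref{5.1} and~\ref{5.2} combined with the closure of $\cF$-isomorphisms under restriction, which circumvents the issue entirely.

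The more serious gap is that you do not complete (a) $\Rightarrow$ (c). Your first two stages---upgrading from local points to all points by transporting defect data, and gluing witnesses along a primitive decomposition of $1$ in $A^Q$ to produce, for each $\cF$-isomorphism $\phi$, a global unit in $A^\phi$---are essentially the content of Theorem~\ref{5.2}, and the ingredients you name (Green/Puig indecomposability, relative multiplicities, Lemma~\ref{4.4}) are the right ones, though the inductive argument is substantial and your sketch is far from a proof. But your final stage, where you try to assemble a unital stable basis by choosing a $D\times D$-orbit of units for each transitive summand of $\Omega$ and descending on $|D:R|$, is not how the paper proceeds and, as you concede, you cannot close it. The paper's decisive tool is Theorem~\ref{2.4}, proved by a generic-perturbation argument (Lemmas~\ref{2.2} and~\ref{2.3}): starting from \emph{any} $D\times D$-stable basis $\Omega$ and, for each orbit representative $w$, a unit $u(w)$ fixed by $N_{D\times D}(w)$, one shows that for all but finitely many $\lambda$ the equivariantly-defined set $\{w + \lambda u(w)\}$ is simultaneously a basis and consists of units. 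This sidesteps any need to match orbit structures of $A^\times$ against the biset decomposition of $\Omega$, which is the genuinely hard combinatorics you were attempting. Without Theorem~\ref{2.4} or a substitute, your proof of (a) $\Rightarrow$ (c) does not close, and the rest is already known to you to be incomplete. You should also record that the paper's (c) $\Rightarrow$ (a),(b) direction invokes Alperin's Fusion Theorem to reach $\cF$-isomorphisms with $A(\phi) = 0$; your alternative of citing Theorem~\ref{1.1} to get $\Omega^{\Delta(\phi)} \neq \emptyset$ directly is a legitimate shortcut for that direction, but it does not help with (a) $\Rightarrow$ (c).
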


We shall prove that theorem in Section
\ref{8}. When the equivalent conditions
(a), (b), (c) hold, we call $A$ a {\bf uniform
almost-source $D$-algebra} of $\cO G b$.

Condition (b) can be mnemonically
summarized as: multiplicities of pointed
groups are $\cF$-invariant, condition (a)
likewise. Let us explain another way of
interpreting condition (a) as an assertion
of $\cF$-invariance. Recall that, for each
$\cF$-centric subgroup $P$ of $D$, there
is a unique local point $\lambda_P^A$ of
$P$ on $A$. Given $\cF$-centric
subgroups $P$ and $Q$ of $D$ and an
$\cF$-isomorphism $\phi : Q \rightarrow P$,
then $\phi$ is an isofusion $Q_{\lambda_Q^A}
\rightarrow P_{\lambda_P^A}$. Condition (a),
confined to the centric objects of $\cF$,
says that the function $P \mapsto
m_A(P_{\lambda_P^A})$ is constant on
$\cF$-isomorphism classes.

\begin{conj}
\label{1.3} Let $b$ be a block of $\cO G$.
Then the block algebra $\cO G b$ has a
uniform almost-source algebra.
\end{conj}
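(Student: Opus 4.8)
The plan is to reduce, via Theorem~\ref{1.2}, to the problem of exhibiting an almost-source algebra $A$ of $\cO G b$ that carries a \emph{unital} $D \times D$-stable basis; since $A$ is in any case a permutation $\cO(D \times D)$-module, only the unitality of the stabilized basis is genuinely at issue. I would establish this unconditionally when $G$ is $p$-solvable --- obtaining there even a \emph{source} algebra with the required basis --- and I expect the passage to arbitrary $G$ to be the genuine difficulty, that is, precisely the content of the conjecture.

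\emph{The $p$-solvable case.} First apply the Fong--Reynolds reduction together with the Fong reductions to pass to a block of a group with a central $p'$-core, and then invoke the K\"ulshammer--Puig description of source algebras of blocks of $p$-solvable groups (see, e.g., \cite{Lin18}): one obtains, as interior $D$-algebras, an isomorphism $A \cong \End_\cO(V) \otimes_\cO \cO_\gamma(D \rtimes E)$ in which $E$ is a finite $p'$-group acting on $D$, the interior structure comes from the canonical embedding $D \hookrightarrow D \rtimes E$, and the cocycle $\gamma$ is the inflation of a $2$-cocycle $\bar\gamma$ of $E$. That $\gamma$ restricts trivially to $D$ is already forced, since $D$ must embed as an honest subgroup of the unit group of a source algebra; that $\gamma$ can moreover be taken inflated from $E$ is what the structure theory delivers (and is the one delicate point in this part of the argument), the twist arising from a $p'$-central extension of $E$.

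\emph{Constructing the basis.} Choose lifts $\hat e \in \cO_\gamma(D \rtimes E)^\times$ of the elements $e \in E$ with $\hat e_1 \hat e_2 = \bar\gamma(e_1,e_2)\,\widehat{e_1 e_2}$, and put $\widehat{(d,e)} := d\,\hat e$ for $d \in D$; then $\{\widehat{(d,e)} : d \in D,\ e \in E\}$ is an $\cO$-basis of $\cO_\gamma(D \rtimes E)$ consisting of units. Because $\gamma$ is inflated from $E$ one has $\hat e\, d' = ({}^e d')\,\hat e$ with no scalar correction for $d' \in D$, and a direct computation then gives $d_1\,\widehat{(d,e)}\,d_2 = \widehat{(d_1 d\,{}^e d_2,\,e)}$, so left and right multiplication by $D$ permute this basis. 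On the matrix factor, use that $\FF$ is infinite: a full matrix algebra over $\FF$ has a basis of invertible matrices, and reducing $\End_\cO(V)$ modulo the maximal ideal of $\cO$ and lifting such a basis (lifts of units are units, and lifts of a basis form a basis by Nakayama) produces an $\cO$-basis $\Lambda$ of $\End_\cO(V)$ consisting of units. Since $D$ acts trivially on $\End_\cO(V)$, the set $\{\lambda \otimes \widehat{(d,e)} : \lambda \in \Lambda,\ d \in D,\ e \in E\}$ is a unital $D \times D$-stable basis of $A$; by Theorem~\ref{1.2}, $A$ is a uniform source algebra, which proves the conjecture for $p$-solvable $G$.

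\emph{The general case.} Here the argument stalls: for an arbitrary block there is no comparable presentation of the source algebra. The property in condition~(c) --- equivalently, the $\cF$-invariance of the multiplicities of pointed groups in (a) and (b) --- is not known to be preserved under the splendid Morita or derived equivalences through which blocks are usually compared, so it cannot simply be transported from known cases; nor is a reduction to quasi-simple groups, in the style of other local--global conjectures, visibly available. I expect the crux to be exactly this: one wants either a transfer principle propagating the existence of a unital stable basis along such equivalences, or a direct argument that the function $P_\gamma \mapsto m_A(P_\gamma)$ on local pointed groups factors through the orbit category of $\cF$ --- presumably through a careful analysis of the Brauer constructions $A(P)$ --- with no explicit model of $A$ in hand.
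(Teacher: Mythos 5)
You correctly identify that only the $p$-solvable case is accessible (this is Theorem~\ref{1.4}) and that the general statement is genuinely open; but your argument for the $p$-solvable case has a gap. In the K\"ulshammer--Puig description of the source algebra of a block of a $p$-solvable group (after the Fong reductions), the interior $D$-algebra structure on $\End_\cO(V) \otimes_\cO \cO_\gamma(D \rtimes E)$ is \emph{diagonal}: $V$ is an endo-permutation $\cO D$-module, generally non-trivial, and $d \in D$ acts as (the action of $d$ on $V$) tensored with the image of $d$ in $\cO_\gamma(D\rtimes E)$, not merely through the embedding into the second factor. Hence $D$ does \emph{not} act trivially by conjugation on $\End_\cO(V)$, and your set $\{\lambda \otimes \widehat{(d,e)}\}$ is not $D\times D$-stable. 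What is available on the matrix factor is only a $D$-stable (permutation) basis that need not consist of units; making such a basis unital is exactly the original difficulty, now relocated to $\End_\cO(V)$. This gap is also why the paper proves Conjecture~\ref{1.3} (almost-source) rather than Conjecture~\ref{1.6} (source) in the $p$-solvable case: your proposal, if it worked, would settle the stronger statement, which the paper does not claim.

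The paper's route to Theorem~\ref{1.4} avoids the twisted group algebra picture entirely. Harris--Linckelmann (Theorem~\ref{8.3}) provides, for $p$-solvable $G$, a subgroup $H \leq G$ and a block $c$ of $\cO H$ of \emph{principal type} with the same defect group $D$ and with $\cO G b \cong {}_G\Ind_H(\cO H c)$ as interior $G$-algebras. Setting $A = c\,\cO G\,c = \cO H c$, this $A$ is simultaneously an almost-source $D$-algebra of $\cO G b$ (Lemma~\ref{8.4}) and a full block algebra of $\cO H$; the latter always admits a unital $D\times D$-stable basis (Corollary~\ref{2.5}, obtained by perturbing the group-element basis via Theorem~\ref{2.4}), so $A$ is uniform. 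The trade-off is that one obtains only an almost-source algebra, never a source algebra, which is precisely what the statement of Theorem~\ref{1.4} allows. If you want to pursue your line, the missing ingredient is a proof that $\End_\cO(V)$, as a $D$-algebra with $V$ an endo-permutation source of a block bimodule, admits a unital $D$-stable basis; that is not automatic and appears to be where the real content of Conjecture~\ref{1.6} for $p$-solvable groups would lie.
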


Part of our evidence for the conjecture
is the following result, proved in
Section \ref{8}.

\begin{thm}
\label{1.4}
Suppose $G$ is $p$-solvable. Given a block
$b$ of $\cO G$, then the block algebra
$\cO G b$ has a uniform almost-source
algebra.
\end{thm}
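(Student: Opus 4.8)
The plan is to reduce to the case where $b$ has a normal defect group, where one can build an explicit stable basis, and then push the conclusion back up along the block-theoretic machinery for $p$-solvable groups. By Theorem~\ref{1.2}, it suffices to produce a single almost-source $D$-algebra $A$ of $\cO G b$ admitting a unital $D {\times} D$-stable basis; equivalently (condition (a) or (b)), one for which relative multiplicities of pointed groups are $\cF$-invariant. First I would invoke the structure theory of blocks of $p$-solvable groups: by the Fong--Reynolds reduction and the Fong--K\"ulshammer--Puig theorem (see Linckelmann \cite[Chapter~10]{Lin18}), a source algebra of $\cO G b$ is isomorphic, as an interior $D$-algebra, to a source algebra of a block $\bar b$ of a group $\bar G$ in which the defect group $D$ is normal, tensored up in a way that does not disturb the fusion system or the pointed-group multiplicities. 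So the heart of the matter is the case $D \trianglelefteq G$.

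When $D$ is normal in $G$, the block $\cO G b$ is (Morita, indeed interior-$D$-algebra) equivalent to a twisted group algebra $\cO_\alpha[D \rtimes E]$ for a $p'$-group $E \leq \Out(D)$ realizing $\cF = \cF_D(D \rtimes E)$, the twist $\alpha$ being a $2$-cocycle inflated from $E$; this is again Fong--K\"ulshammer. The source algebra $A$ in this situation can be taken to be $\cO_\alpha[D \rtimes E]$ itself (or a matrix-algebra amplification thereof, which only scales all multiplicities uniformly and hence preserves $\cF$-invariance). The key point is then that $\cO_\alpha[D \rtimes E]$ has an obvious $D {\times} D$-stable basis: the set $\{ x \, \tilde e : x \in D,\ e \in E \}$, where $\tilde e$ is a chosen lift of $e$. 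Left and right multiplication by $D$ permutes this set (using that $D$ is normal, so $\tilde e \, x = {}^{e}x \, \tilde e$ up to a unit in $\cO$, which we can absorb by adjusting the lifts $\tilde e$ appropriately — or, more carefully, by noting the basis $D \tilde e D = \{ x \tilde e y : x,y \in D\}$ is a single $(D,D)$-orbit-union that is manifestly stable). Each basis element is a unit since $x$ and $\tilde e$ are units. This establishes condition (c) in the normal-defect case.

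The remaining work is bookkeeping: check that the Fong--Reynolds and Fong--K\"ulshammer--Puig reductions are compatible with the notion of ``almost-source algebra'' and its fusion system, and that they carry a unital stable basis on the reduced algebra to one on $A$. For Fong--Reynolds this is immediate because the source algebras literally coincide. For the Fong--K\"ulshammer step, the source algebra of $\cO G b$ is of the form $\Mat_n(\cO) \otimes_{\cO} \cO_\alpha[D \rtimes E]$ as an interior $D$-algebra (with $D$ embedded via the second factor), and tensoring a unital $D {\times} D$-stable basis of the second factor with any basis of $\Mat_n(\cO)$ consisting of units — e.g. obtained by conjugating the standard matrix units, or simply observing $\Mat_n(\cO)$ has a basis of invertible matrices — yields a unital $D {\times} D$-stable basis of the whole. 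I expect the main obstacle to be precisely this last compatibility check: making sure the cocycle twist $\alpha$ does not obstruct unitality of the basis (it does not, since $\alpha$ takes values in $\cO^\times$) and that the identification of fusion systems under the reduction is the one in Theorem~\ref{1.2}, so that conditions (a)--(c) transport correctly. Once that is in place, Theorem~\ref{1.2} gives that $A$ is uniform, and hence $\cO G b$ has a uniform almost-source algebra.
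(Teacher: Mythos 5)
The central reduction in your proposal does not hold. You claim that, for $p$-solvable $G$, a source algebra of $\cO G b$ is isomorphic as an interior $D$-algebra to a twisted group algebra $\cO_\alpha[D \rtimes E]$ with $E$ a $p'$-group, and you attribute this to a ``Fong--K\"ulshammer--Puig'' reduction to the normal-defect-group case. That reduction does not exist in the generality you need. K\"ulshammer's structure theorem applies to blocks whose defect group is already normal in $G$; Fong reduction for $p$-solvable groups reduces to the case where $O_{p'}(G)$ is central (or, after a further step, to $O_{p'}(G) = 1$), but the defect group of the resulting block is a Sylow $p$-subgroup of a $p$-solvable group, which need not be normal. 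Concretely: take $G = S_4$ and $p = 2$. Then $G$ is $p$-solvable, $O_{2'}(G) = 1$, and the principal block has defect group $D \cong D_8$, a non-normal Sylow $2$-subgroup. The associated fusion system $\cF_D(S_4)$ has $O_2(\cF) = V_4 \neq D$, so $D$ is not normal in $\cF$. Were the source algebra of the form $\cO_\alpha[D \rtimes E]$ with $E$ a $p'$-group, its fusion system would be $\cF_D(D \rtimes E)$, in which $D$ is normal. Since an interior-$D$-algebra isomorphism of source algebras preserves the fusion system, your claimed isomorphism cannot hold. So the second and third paragraphs of your argument --- the explicit unital stable basis and the tensor-bookkeeping --- are only reaching the special case $\cF = N_\cF(D)$, which is exactly Proposition~\ref{1.7} (and is handled there by a one-line argument), not the general $p$-solvable case.

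The paper avoids this trap by using a weaker but correct structural input: Theorem~\ref{8.3} (Harris--Linckelmann) produces a subgroup $H \leq G$ and a block $c$ of $\cO H$ such that $D$ is a Sylow $p$-subgroup of $H$ (not necessarily normal), $c$ is of \emph{principal type}, and $\cO G b \cong {}_G\Ind{}_H(\cO H c)$. Because $c$ is of principal type, the restriction ${}_D\Res{}_H(\cO H c)$ is itself an almost-source $D$-algebra of $\cO H c$, and Corollary~\ref{2.5} --- a soft counting argument, needing only that $\cO H c$ is a direct summand of the permutation $\cO(D{\times}D)$-module $\cO H$ --- supplies a unital $D{\times}D$-stable basis. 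The paper then identifies $c\,\cO G\,c$ with $\cO H c$ via part~(3) of Theorem~\ref{8.3} and uses Lemma~\ref{8.4} to verify that this algebra is also an almost-source $D$-algebra of $\cO G b$. The moral: the right invariant to push through the $p$-solvable reduction is ``principal type'', not ``normal defect group'', and once you have principal type you do not need any explicit twisted-group-algebra model --- the generic stable-basis perturbation argument of Section~\ref{2} suffices. If you want to salvage your approach, replace the appeal to K\"ulshammer--Puig by Harris--Linckelmann and the principal-type condition; the explicit basis construction is then unnecessary.
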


For source algebras, the multiplicities
appearing in conditions (a) and (b) of
Theorem \ref{1.2} can be realized as
relative multiplicities. In Section \ref{8}
we shall explain how, in the case of a
source algebra of $\cO G b$,
Theorem \ref{1.2} specializes as follows.

\begin{thm}
\label{1.5}
Let $b$ be a block of $\cO G$ with defect
group $D$ and source $D$-algebra $A$.
Write $\cF$ for the fusion system on $D$
associated with $A$. Let $\lambda_D$
be the unique local point of $D$ on $\cO G$
such that $\lambda_D \cap A \neq
\emptyset$. Then the following three
conditions are equivalent:

\noin {\bf (a)} For every $\cF$-isomorphism
$\phi : Q \rightarrow P$, there is a bijective
correspondence between: the local points
$\gamma$ of $P$ on $\cO G$ satisfying
$P_\gamma \leq D_{\lambda_D}$; the local
points $\delta$ of $Q$ on $\cO G$
satisfying $Q_\delta \leq D_{\lambda_D}$.
The correspondence is such that $\gamma
\leftrightarrow \delta$ if and only if $\phi$
is an isofusion $Q_\delta \rightarrow
P_\gamma$. Furthermore, when $\gamma
\leftrightarrow \delta$, we have
$m_{\cO G}(P_\gamma, D_{\lambda_D})
= m_{\cO G}(Q_\delta, D_{\lambda_D})$.

\noin {\bf (b)} Upon removing the term
{\it local} from (a), the condition still holds.

\noin {\bf (c)} There exists a unital
$D {\times} D$-stable basis for $A$.
\end{thm}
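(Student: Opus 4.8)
The plan is to derive Theorem~\ref{1.5} from Theorem~\ref{1.2} via the classical correspondence between pointed groups on a source algebra and pointed groups on the group algebra lying below the defect pointed group. Set $i = 1_A$; this is a source idempotent, so $i \in \lambda_D$, the interior $D$-algebra $A$ equals $i \cO G i$, and $D_{\lambda_D}$ is a defect pointed group of $\cO G b$. For every $P \leq D$ we have $i \in (\cO G)^D \subseteq (\cO G)^P$, hence $A^P = i (\cO G)^P i$ is the corner algebra of $(\cO G)^P$ cut out by $i$. A primitive idempotent of $(\cO G)^P$ lying under $i$ is the same as a primitive idempotent of $A^P$, and a primitive idempotent decomposition of the unity $i$ of $A^P$ is literally a primitive idempotent decomposition of the source idempotent $i$ in $(\cO G)^P$. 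From this one obtains the standard bijection $\gamma \leftrightarrow \gamma'$ between the points $\gamma$ of $P$ on $A$ and the points $\gamma'$ of $P$ on $\cO G$ with $P_{\gamma'} \leq D_{\lambda_D}$ (in the notation of the statement of Theorem~\ref{1.5}, this $\gamma'$ is what is there called $\gamma$), and with it the identity $m_A(P_\gamma) = m_{\cO G}(P_{\gamma'}, D_{\lambda_D})$.

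The first step is to isolate the four properties of this bijection that the argument uses: (i) it restricts to a bijection between the local points of $P$ on $A$ and the local points $\gamma'$ of $P$ on $\cO G$ with $P_{\gamma'} \leq D_{\lambda_D}$; (ii) it is compatible with the containment order of pointed groups; (iii) it satisfies $m_A(P_\gamma) = m_{\cO G}(P_{\gamma'}, D_{\lambda_D})$; and (iv) for subgroups $P, Q \leq D$ and a group isomorphism $\phi : Q \rightarrow P$, the map $\phi$ is an isofusion $Q_\delta \rightarrow P_\gamma$ of pointed groups on $A$ if and only if it is an isofusion $Q_{\delta'} \rightarrow P_{\gamma'}$ of pointed groups on $\cO G$. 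Properties (i)--(iii) fall straight out of the corner description $A^P = i(\cO G)^P i$ together with the behaviour of Brauer quotients under corners; property (iv) is the one with content, to be obtained by unwinding the definition of $A$-fusion recalled in Section~\ref{4} and checking that it transports along the embedding $A = i\cO G i \hookrightarrow \cO G$.

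The second step is the translation. The fusion system on $D$ associated with $A$ is, by definition, the same system $\cF$ whether it is read off $A$ or off the pair $(\cO G, D_{\lambda_D})$, so the $\cF$-isomorphisms $\phi : Q \rightarrow P$ appearing in Theorems~\ref{1.2} and~\ref{1.5} are literally the same maps. Feeding the dictionary of the first step into condition~(a) of Theorem~\ref{1.2}: the bijective correspondence there asserted between the local points of $P$ and of $Q$ on $A$ --- the correspondence being the relation ``$\phi$ is an isofusion'' and carrying the equality $m_A(P_\gamma) = m_A(Q_\delta)$ --- passes, via (i) and (iv) for the correspondence and (iii) for the multiplicities, to exactly the bijective correspondence asserted in condition~(a) of Theorem~\ref{1.5}, now carrying $m_{\cO G}(P_{\gamma'}, D_{\lambda_D}) = m_{\cO G}(Q_{\delta'}, D_{\lambda_D})$. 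Deleting the word ``local'' throughout and using the full bijection $\gamma \leftrightarrow \gamma'$ in place of its restriction turns condition~(b) of Theorem~\ref{1.2} into condition~(b) of Theorem~\ref{1.5} in the same way, and condition~(c) is verbatim identical in the two theorems. Hence the equivalence of (a), (b), (c) in Theorem~\ref{1.5} follows from that in Theorem~\ref{1.2}.

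The main obstacle is property~(iv): reconciling the pointed-group correspondence with Puig's notion of $A$-fusion. The remaining ingredients are routine manipulations with the corner algebra $A^P = i(\cO G)^P i$ and its Brauer quotient.
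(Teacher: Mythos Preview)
Your proposal is correct and follows essentially the same route as the paper: deduce Theorem~\ref{1.5} from Theorem~\ref{1.2} via the standard bijection $\beta' = A \cap \beta$ between points of $P$ on $A$ and points $\beta$ of $P$ on $\cO G$ with $P_\beta \leq D_{\lambda_D}$, together with the identity $m_A(P_{\beta'}) = m_{\cO G}(P_\beta, D_{\lambda_D})$. The paper's argument is a terse paragraph that asserts this dictionary and then declares the result ``now clear''; you are more explicit in isolating property~(iv), the compatibility of isofusions under the embedding $A = \ell\cO G\ell \hookrightarrow \cO G$, which the paper leaves to the reader but which indeed follows immediately from Lemma~\ref{4.1}(b) since $iA^\phi j = i(\cO G)^\phi j$ for $i,j \leq \ell$.
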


By the uniqueness of source algebras up
to conjugation, if some source algebra of
$\cO G b$ is uniform, then every source
algebra of $\cO G b$ is uniform.

For any source $D$-algebra $A$, each
$\cF$-centric $P \leq D$ can be
associated with a relative multiplicity, as
follows. For a reason recalled in Section
\ref{8}, the $\cF$-centricity of $P$ implies
the existence of a unique local point
$\lambda_P$ of $P$ on $\cO G$ such that
$P_{\lambda_P} \leq D_{\lambda_D}$. We
define $m_A(P) = m_{\cO G}(P_{\lambda_P},
D_{\lambda_D})$. Uniformity implies that
the relative multiplicity $m_A(P)$ is
$\cF$-invariant. We mean to say, if $A$
is uniform, then $m_A(P)$ depends only
on the $\cF$-isomorphism class of $P$.

\begin{conj}
\label{1.6}
For any block $b$ of $\cO G$, the source
algebras of $\cO G b$ are uniform.
\end{conj}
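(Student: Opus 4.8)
We propose the following line of attack. A source $D$-algebra is in particular an almost-source $D$-algebra, so Theorem \ref{1.5} applies to it and its clauses (a), (b), (c) are equivalent; moreover, by the uniqueness of source algebras up to conjugacy noted above, it suffices to verify one of those conditions for a single source $D$-algebra $A$ of $\cO G b$. We would target condition (c): the existence of a unital $D \times D$-stable basis for $A$. By Green's Indecomposability Criterion $A$ is already a permutation $\cO(D \times D)$-module, hence carries \emph{some} $D \times D$-stable basis $\Omega_0$, well-defined up to isomorphism of $(D,D)$-bisets; if $\Omega_0$ could be chosen to consist of units then, by Theorem \ref{1.1}, it would be an $\cF$-characteristic biset. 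The whole problem is therefore to upgrade $\Omega_0$ to a basis by units without changing its biset isomorphism type --- equivalently, to realize inside the unit group $A^\times$ a $(D,D)$-subset whose $\cO$-span is all of $A$ and whose biset type is that of $\Omega_0$.

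The first concrete step would be to decompose $\Omega_0$ into $D \times D$-orbits, each isomorphic to $(D \times D)/\Delta_\phi(R)$ for some $R \le D$ and some $\cF$-morphism $\phi$ with source $R$, the orbit multiplicities being read off from the Brauer quotients $A(\Delta_\phi(R))$ and matched against Linckelmann's description of how $\cF$ is encoded in $\Omega$ (Theorem \ref{7.2}). The natural supply of units of $A$ is the set of nonzero elements $i g i$, where $i$ is a source idempotent and $g$ runs over $G$; one would try, orbit by orbit, to replace the chosen representatives of $\Omega_0$ by suitably normalized combinations of such elements that remain $\cO$-linearly independent, still span $A$, and realize the same twisted-diagonal orbit. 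The bookkeeping should be governed by the local structure of the block --- the Brauer pairs $(R, e_R) \le (D, e_D)$ and the corresponding pointed groups on $\cO G$ --- together with the counting identities for relative multiplicities recalled before Theorem \ref{1.2} and the saturation of $\cF$. An equivalent route is to prove clause (a) or (b) of Theorem \ref{1.5} head-on, that is, to show that the relative multiplicities $m_{\cO G}(P_\gamma, D_{\lambda_D})$ are constant on $\cF$-isomorphism classes, again by combining those multiplicity identities with the local theory of the block.

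In the $p$-solvable case one can hope to be explicit, sharpening Theorem \ref{1.4} to the source-algebra statement: the K\"ulshammer--Puig structure theorem presents a source algebra of a block of a $p$-solvable group as a tensor product of a matrix algebra with a twisted group algebra of a $p'$-extension of a fusion-controlling group, and for such an explicitly given $A$ one ought to be able to write down a unital $D \times D$-stable basis by hand. The main obstacle is the general case. The basis furnished by the Krull--Schmidt Theorem is non-constructive, and there is at present no general mechanism for promoting an abstract $\cF$-characteristic biset to a basis that lies in $A^\times$ and is compatible with the bimodule structure of $A$; this gap is precisely what keeps Conjecture \ref{1.6}, and even the weaker Conjecture \ref{1.3}, open. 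A realistic intermediate target would be to settle the conjecture for every block whose source algebra is already understood explicitly --- nilpotent blocks, where $A \cong \cO D$ and $\Omega_0 = D$ already works, blocks with cyclic or Klein-four defect group, and blocks of $p$-solvable groups --- thereby adding to the evidence provided by Theorem \ref{1.4}.
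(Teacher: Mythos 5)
You have correctly identified that this statement is a \emph{conjecture}: the paper offers no proof of it, and indeed you are right that no proof is currently known. Your write-up is not a proof attempt in the strict sense but a survey of reductions and a strategy, and as such it is accurate and thoughtful: the reduction to a single source algebra via conjugacy, the reduction via Theorem \ref{1.5} to exhibiting a unital $D\times D$-stable basis, the role of Green's Indecomposability and Krull--Schmidt in supplying a stable basis $\Omega_0$ whose biset type is forced, and the identification of the real obstruction (promoting $\Omega_0$ to a basis of units without disturbing the biset type) all match what the paper's own framework makes visible. Your closing observation that this is exactly what keeps Conjectures \ref{1.3} and \ref{1.6} open is correct.

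A few adjustments would sharpen the discussion. First, your proposed ``natural supply of units'' $\ell g \ell$ (for $\ell$ the source idempotent and $g \in G$) is misleading: such elements are units of $A = \ell\,\cO G\,\ell$ only in special circumstances (e.g.\ $g \in D$, or more generally when $\ell$ and ${}^g\ell$ are associate via an element of $\ell\,\cO G\,\ell$); for general $g$ the element $\ell g \ell$ is not invertible. The mechanism the paper actually uses to produce stable bases of units is Theorem \ref{2.4} together with Lemma \ref{2.3}: one needs, for each orbit representative $w$, a unit of $A$ fixed by $N_{D\times D}(w)$, and then perturbs $w$ by a generic scalar multiple of that unit. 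Second, you do not mention Proposition \ref{1.7}, which is the paper's strongest direct evidence for Conjecture \ref{1.6}: it settles the conjecture whenever $\cF = N_\cF(D)$, in particular for abelian defect groups and for normal defect groups, and its proof runs exactly through the unit-producing mechanism just described. Third, be careful to keep Theorem \ref{1.4} in its proper place: it settles the $p$-solvable case of Conjecture \ref{1.3} (some almost-source algebra is uniform), not of Conjecture \ref{1.6}, by working with the almost-source algebra $\ell\,\cO G\,\ell$ with $\ell$ the Harris--Linckelmann idempotent, which is generally not primitive; sharpening this to the source algebra, as you propose via the K\"ulshammer--Puig description, is a genuinely open and reasonable intermediate target but is not accomplished in the paper.
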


In Section \ref{8}, we shall show that
Conjecture \ref{1.6} holds in the following
special case. Recall, a fusion system
$\cF$ on a finite $p$-group $D$ satisfies
$\cF = N_\cF(D)$ if and only if every
$\cF$-isomorphism is a restriction of
an $\cF$-automorphism of $D$.

\begin{pro}
\label{1.7}
In the notation of Theorem \ref{1.5},
if $\cF = N_\cF(D)$, then the
source $D$-algebras of $\cO G b$ are
uniform. In particular, the conclusion
holds whenever $D$ is abelian or
$D \unlhd G$.
\end{pro}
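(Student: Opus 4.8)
The plan is to verify condition (a) of Theorem~\ref{1.2} for a source $D$-algebra $A$ of $\cO G b$ (the same argument will simultaneously yield (b)); by Theorem~\ref{1.2} this shows $A$ is uniform, hence by the conjugacy of source algebras every source $D$-algebra of $\cO G b$ is uniform, which by Theorem~\ref{1.5} is the proposition. Write $A = i\,\cO G\,i$ for a source idempotent $i$, let $\delta$ be the local point of $D$ on $\cO G$ with $i\in\delta$, and recall the standard facts $\cF = \cF_{D_\delta}(G)$ and $\Aut_\cF(D)\cong N_G(D_\delta)/C_G(D)$. The hypothesis $\cF = N_\cF(D)$ says precisely that every $\cF$-isomorphism $\phi:Q\rightarrow P$ with $P,Q\leq D$ is the restriction of some $\psi\in\Aut_\cF(D)$ with $\psi(Q)=P$, so it suffices to treat $\cF$-automorphisms of $D$.

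The crucial step is to realize each $\psi\in\Aut_\cF(D)$ by conjugation by a unit of $A$. Given $\psi$, lift it to $g\in N_G(D_\delta)$. Since ${}^g i\in\delta$, there is a unit $c$ of $(\cO G b)^D$ with $c\,({}^g i)\,c^{-1}=i$; put $a_\psi := (cg)\,i$. A routine computation — using $c\,({}^g u)\,c^{-1} = {}^g u$ for $u\in D$ (valid because $c$ is $D$-fixed) — shows that $a_\psi$ is a unit of $A$ with inverse $(g^{-1}c^{-1})\,i$ and that $a_\psi\,(ui)\,a_\psi^{-1} = \psi(u)\,i$ for all $u\in D$. (This is essentially the well-known fact that $i\,\cO N_G(D_\delta)\,i$ is a crossed product of $A^D = i\,(\cO G b)^D\,i$ by $\Out_\cF(D)$, and could instead be quoted from the source-algebra literature.)

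Now fix an $\cF$-isomorphism $\phi:Q\rightarrow P$, extend it to $\psi\in\Aut_\cF(D)$ as above, and set $a := a_\psi$. Conjugation $\mathrm{conj}_a$ is then an $\cO$-algebra automorphism of $A$ fixing the unity $i$, and since $a\,(ui)\,a^{-1} = \phi(u)\,i$ for $u\in Q$ it restricts to an isomorphism $A^Q\rightiso A^P$ intertwining the $Q$- and $P$-conjugation actions via $\phi$; in particular it induces an isomorphism of Brauer quotients $\overline{A}(Q)\rightiso\overline{A}(P)$. Consequently $\mathrm{conj}_a$ maps a primitive idempotent decomposition of $i$ in $A^Q$ to one in $A^P$, and the induced assignment $\beta\mapsto\alpha := \mathrm{conj}_a(\beta)$ is a bijection from the points of $Q$ on $A$ to the points of $P$ on $A$ which restricts to a bijection on local points and satisfies $m_A(Q_\beta) = m_A(P_\alpha)$. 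It remains to identify this bijection with the isofusion relation. By the characterizations of isofusions established in Section~\ref{4} — in particular, that $\phi$ is an isofusion $Q_\beta\rightarrow P_\alpha$ precisely when some unit of $A$ both induces $\phi$ on $Qi$ and carries $\beta$ to $\alpha$ — the unit $a$ itself witnesses that $\phi$ is an isofusion $Q_\beta\rightarrow P_{\mathrm{conj}_a(\beta)}$, which gives one implication; conversely, if $\phi$ is an isofusion $Q_\beta\rightarrow P_\alpha$ via some unit $a'$ of $A$, then $a' a^{-1}$ commutes with $Pi$, hence lies in $(A^P)^\times$ and fixes the point $\alpha$, so $\alpha = \mathrm{conj}_{a'}(\beta) = \mathrm{conj}_a(\beta)$. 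Thus conditions (a) and (b) of Theorem~\ref{1.2} hold, and $A$ is uniform. For the stated special cases: when $D$ is abelian, saturation of the block fusion system $\cF$ forces $\cF = N_\cF(D)$, since the extension axiom lets every $\cF$-morphism extend all the way to $D$; when $D\unlhd G$, the block fusion system again satisfies $\cF = N_\cF(D)$, for instance because, by Külshammer's structure theorem for blocks with normal defect group, the source algebra is a twisted group algebra $\cO_\alpha[D\rtimes\Out_\cF(D)]$, whose fusion system plainly has this property.

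I expect the main obstacle to be the second paragraph together with the matching in the third: one must pin down the unit $a_\psi$ and, crucially, confirm that the notion of isofusion reviewed in Section~\ref{4} is equivalent to the form ``conjugation by a unit of $A$ compatible with a bijection of points'' used above, so that the bijection induced by $\mathrm{conj}_{a_\psi}$ is literally the isofusion correspondence. Once that equivalence is in hand the remainder is bookkeeping; and if preferred, one can instead verify condition (a) of Theorem~\ref{1.5} directly after translating the multiplicities $m_A$ into the relative multiplicities $m_{\cO G}(-,D_{\lambda_D})$ as in the discussion following that theorem.
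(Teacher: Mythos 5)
Your proposal rests on the same core construction as the paper's proof: for each $\cF$-automorphism $\psi$ of $D$, one exhibits a unit of $A$ lying in $A^{\Delta(\psi)}$ (your $a_\psi = cgi$ is exactly the paper's $r$, realized explicitly). After that, however, you take a needlessly long route. The unit $a_\psi \in A^\times \cap A^{\Delta(\phi)}$ already gives condition (c) of Theorem~\ref{5.2} for each $\cF$-isomorphism $\phi$; and since the stabilizers occurring in any $D \times D$-stable basis of $A$ are of the form $\Delta(\phi)$ for $\cF$-isomorphisms $\phi$ (bifreeness together with the forward direction of Theorem~\ref{7.2}), Theorem~\ref{2.4} -- which is what the paper invokes -- directly yields a unital $D \times D$-stable basis, i.e.\ condition (c) of Theorem~\ref{1.2}. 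Your third paragraph instead re-derives the point bijection and multiplicity equalities by hand, which amounts to re-proving the implication (c) $\Rightarrow$ (b) of Theorem~\ref{5.2}, already established in the paper.

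There is also a small gap in that third paragraph. Its ``converse'' step tacitly upgrades the isofusion criterion of Lemma~\ref{4.1}(a) -- which only demands a unit $r$ with $\phi(v) i' = {}^{r}(vj)$ -- to a unit conjugating all of $Qi$ onto $Pi$; without that upgrade, the claim that $a'a^{-1}$ commutes with $Pi$ does not follow from $a'$ being a $\phi$-witness in the paper's sense. The clean repair is to cite the uniqueness clause of Lemma~\ref{4.2}: once $\phi$ is known to be an isofusion $Q_\beta \rightarrow P_{\mathrm{conj}_a(\beta)}$, uniqueness forces any $\alpha$ with $\phi$ an isofusion $Q_\beta \rightarrow P_\alpha$ to equal $\mathrm{conj}_a(\beta)$, so the bijection $\beta \mapsto \mathrm{conj}_a(\beta)$ is the isofusion correspondence. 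Your remarks on the abelian and normal-defect special cases are fine.
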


Let us outline the organization of the
paper. The next three sections may be
of use in applications quite different from
our main concerns here. Section \ref{2}
supplies a necessary and sufficient criterion
for an interior $G$-algebra to have a
$G {\times} G$-stable basis. In
Section \ref{3}, we show that, for any
$G$-algebra, distinct points of a
$p$-subgroup cannot share a defect
pointed subgroup. In Section \ref{4},
we present a new approach to fusions
in interior $G$-algebras. The remaining
four sections are focused on proofs of
the new results stated in this introduction.
Sections \ref{5} and \ref{6} establish the
crux of Theorem \ref{1.2} but, for simplicity,
in a more abstract setting.

A $(D, D)$-biset $\Omega$ is said to be
{\bf $\cF$-divisible} provided $D {\times} 1$
and $1 {\times} D$ act freely on $\Omega$
and, given any isomorphism $\phi$
between subgroups of $D$, then $\phi$
is an $\cF$-isomorphism if and only if
there exists $w \in \Omega$ satisfying
$\phi(y) w y^{-1} = w$ for all $y$ in the
domain of $\phi$. In Section 7, we shall
prove the following theorem.

\begin{thm}
\label{1.8}
Let $b$, $D$, $A$, $\cF$ be as in
Theorem \ref{1.1}. Let $\Omega$ be
a $D {\times} D$-stable basis for $A$.
If $\cF$ is $p$-constrained or $G$ is
$p$-solvable or $b$ is the principal
block of $\cO G$, then $\Omega$ is
$\cF$-divisible.
\end{thm}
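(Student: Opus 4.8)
The plan is to reduce the statement to a property of the associated fusion system $\cF$ that can be checked using the structure of $\Omega$ recorded in Linckelmann's Theorem \ref{7.2}. The biset $\Omega$ is automatically $D \times 1$- and $1 \times D$-free, since $A$ is a permutation $\cO(D \times D)$-submodule of $\cO G$ on which $D \times 1$ and $1 \times D$ act freely (both embed into $G$ via the interior structure). So the content is the biconditional: for an isomorphism $\phi$ between subgroups of $D$, $\phi$ is an $\cF$-isomorphism precisely when $\phi$ is realized by a point-fixing element of $\Omega$, i.e. some $w \in \Omega$ with $\phi(y)\,w\,y^{-1} = w$ for all $y$ in the domain. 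One direction is general: if such a $w$ exists, then $w$ lies in some transitive summand of $\Omega$ whose stabilizer encodes $\phi$, and by the description in Theorem \ref{7.2} the corresponding subgroup fusion is an $\cF$-isomorphism; this uses only that $\cF$ is the fusion system associated with $A$ and does not need any of the three hypotheses. The work is in the reverse direction: every $\cF$-isomorphism must be realized by such a $w$.

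For the reverse direction, I would first reduce to the case of $\cF$-automorphisms of subgroups, then to $\cF$-isomorphisms between fully normalized (or, better, suitably chosen) subgroups, using that $\cF$-isomorphisms factor through conjugations in $D$ together with automorphisms of well-chosen representatives — and conjugations in $D$ are visibly realized inside $\Omega$ since $D$ itself acts. So it suffices to realize each $\phi \in \Aut_\cF(P)$ by a fixed point of $\Omega$ in the appropriate twisted sense. Here the three hypotheses enter. When $b$ is the principal block, Theorem \ref{7.5} (the principal-block case of the invariance we are proving elsewhere) lets us drop the problematic non-invariant clause in Theorem \ref{7.2}, and the fixed-point realization follows directly from the remaining, purely biset-theoretic part of that description. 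When $\cF$ is $p$-constrained, $\cF = \cF_D(L)$ for a model $L$ (a finite group with $O_{p'}(L)=1$, $D \in \mathrm{Syl}_p(L)$, $\cF = \cF_D(L)$), and one can compare $\Omega$ with the biset $D \backslash L / D$-type object coming from $L$: the $p$-constrained structure forces the local points on $A$ to be "as large as possible" in the relevant sense, which is exactly what makes every $\Aut_\cF(P)$ visible in $\Omega$. When $G$ is $p$-solvable, I would run the Fong–Reynolds and Külshammer–Puig reductions to peel off the $p'$-part and land in a $p$-constrained situation (or invoke that the fusion system of a block of a $p$-solvable group is constrained), thereby reducing the third case to the second.

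The main obstacle I expect is the reverse direction in the $p$-constrained case: controlling the local points of $P$ on $A$ precisely enough to guarantee that the $\cF$-automorphism group of $P$ is fully realized by stabilizers of elements of $\Omega$, rather than only a subgroup of it. Concretely, the danger is that $\Omega$, as a $(D,D)$-biset, could "see" fewer automorphisms at $P$ than $\cF$ actually has, if the local point structure on $A$ over $P$ were coarser than expected. Ruling this out is where one must use the hypothesis genuinely — via the model $L$ and the comparison of $A$ with (a twisted form of) a source algebra of the principal block of $L$, or via an explicit analysis of the almost-source algebra of a $p$-solvable block along the lines of the Külshammer–Puig description. The other cases should then be comparatively routine: the principal-block case is essentially immediate from the already-cited Theorem \ref{7.5}, and the general $p$-solvable case reduces to the $p$-constrained case by standard block-theoretic reductions.
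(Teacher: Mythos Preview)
Your overall architecture is right --- bifreeness is automatic, one direction of divisibility is Theorem~\ref{7.2}, and the work is the converse --- but the route you propose for the $p$-constrained case is much heavier than needed and, as written, is not yet a proof. You aim to invoke a model $L$ for $\cF$ and compare $A$ with (a twist of) a source algebra for the principal block of $L$; but there is no Morita-type comparison available in this generality between $A$ and anything built from $L$, so the ``comparison'' step is where your argument currently has a genuine gap, exactly where you yourself flag the obstacle. Your preliminary reduction to $\cF$-automorphisms via ``$D$-conjugations plus automorphisms of chosen representatives'' is also not valid as stated: $\cF$-conjugate subgroups need not be $D$-conjugate, so an arbitrary $\cF$-isomorphism does not factor that way.

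The paper's argument for the $p$-constrained case avoids all of this with a one-line extension trick. Let $R = O_p(\cF)$. Normality of $R$ in $\cF$ means any $\cF$-isomorphism $\phi : Q \to P$ extends to an $\cF$-isomorphism $\psi : QR \to PR$. Since $\cF$ is constrained, $R$ is $\cF$-centric, hence so are $PR$ and $QR$; centric subgroups are fully $\cF$-centralized, so Theorem~\ref{7.2} gives $\Omega^{\Delta(\psi)} \neq \emptyset$, and a fortiori $\Omega^{\Delta(\phi)} \neq \emptyset$. For the $p$-solvable case the paper does not run Fong--Reynolds or K\"ulshammer--Puig at all: it simply quotes that the fusion system of a $p$-solvable group is solvable and that solvable saturated fusion systems are constrained (Aschbacher), reducing immediately to the previous case. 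Your principal-block sketch is closest to the paper's: there one uses the trivial-module local points $\sigma_P^G$ and Theorem~\ref{7.5} to produce, for any $\cF$-isomorphism $\phi$, an isofusion between local pointed groups on $A$, whence $A(\phi) \neq 0$ via Lemma~\ref{4.5}, i.e.\ $\Omega^{\Delta(\phi)} \neq \emptyset$.
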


In the final section, we shall complete
the proofs of the results 1.2, 1.4, 1.5, 1.7.

\section{Unital stable bases}
\label{2}

Our hypothesis on $\cO$ implies that
every algebra over $\cO$ has a unital
basis. More generally, for any finite
$p$-group $D$, any permutation
$D$-algebra has a unital $D$-stable
basis, indeed, a further generalization,
with a weaker condition on the coefficient
ring, can be found in Linckelmann
\cite[5.8.13]{Lin18}. The purpose of
this section is to give a criterion for
an interior $G$-algebra to have a
unital $G {\times} G$-stable basis.

As a reminder of a convention
announced in Section 1, let us repeat
that any $\cO$-module $X$ is deemed
to have a finite basis, we mean, a finite
subset $\Omega$ such that $\Omega$
is an $\cO$-basis when $X$ is free,
an $\FF$-basis when $X$ is annihilated
by $J(\cO)$. We understand the {\bf rank}
of $X$ to be the well-defined natural
number $|\Omega|$. We define $\uu{X}
= X / J(\cO)X$ as an $\FF$-module. We
write $\uu{w}$ and $\uu{\Omega}$ for
the images in $\uu{X}$ of an element
$w$ of $X$ and a subset $\Omega$
of $X$.

\begin{lem}
\label{2.1}
Given a free $\cO$-module $X$ and
$\Omega \subseteq X$ such that
the reduction map $X \rightarrow \uu{X}$
restricts to a bijection $\Omega
\rightarrow \uu{\Omega}$, then $\Omega$
is an $\cO$-basis for $X$ if and only
if $\uu{\Omega}$ is an $\FF$-basis
for $\uu{X}$.
\end{lem}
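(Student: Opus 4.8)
The plan is to reduce everything to elementary linear algebra over $\cO$ by exploiting Nakayama's lemma. First I would observe that since the reduction map restricts to a bijection $\Omega \to \uu{\Omega}$, the set $\uu{\Omega}$ has exactly $|\Omega|$ elements, so $\Omega$ is an $\cO$-basis of $X$ if and only if $|\Omega| = \rank(X)$ together with $\Omega$ spanning $X$ over $\cO$; similarly $\uu{\Omega}$ is an $\FF$-basis of $\uu{X}$ if and only if $|\uu\Omega| = \rank(\uu X)$ and $\uu\Omega$ spans $\uu X$. Since $X$ is free of finite rank, $\rank(X) = \rank(\uu X)$, so the cardinality conditions match automatically and the whole statement comes down to: the $\cO$-submodule $\cO\Omega \subseteq X$ equals $X$ if and only if its image $\uu\Omega = \FF\uu\Omega$ equals $\uu X$.

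The forward direction is trivial: if $\cO\Omega = X$ then applying the (surjective) reduction map gives $\FF\uu\Omega = \uu X$. For the converse, suppose $\FF\uu\Omega = \uu X$, i.e. the image of $\cO\Omega$ in $X/J(\cO)X$ is all of $X/J(\cO)X$. Then $X = \cO\Omega + J(\cO)X$. Now $X$ is a finitely generated module over the local ring $\cO$ (here I use the standing hypothesis that $\cO$ is Noetherian local, so $J(\cO)$ is its maximal ideal and Nakayama applies), hence by Nakayama's lemma applied to the finitely generated module $X/\cO\Omega$ — which satisfies $J(\cO)\cdot(X/\cO\Omega) = X/\cO\Omega$ — we conclude $X/\cO\Omega = 0$, that is $\cO\Omega = X$. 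This gives the spanning statement, and combined with the cardinality match from the first paragraph, $\Omega$ is an $\cO$-basis.

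One subtlety to handle carefully is the edge case $\cO = \FF$ (allowed by the hypothesis on $\cO$), where $J(\cO) = 0$, $\uu X = X$, and the statement is a tautology; the argument above degenerates correctly but it is worth noting so the reader isn't puzzled. The only genuine point requiring thought — and the step I'd flag as the ``main obstacle,'' though it is quite mild — is making sure that the bijectivity hypothesis on $\Omega \to \uu\Omega$ is actually needed and used: it is what guarantees $|\Omega| = |\uu\Omega|$, so that once we know $\cO\Omega = X$ (a generating set of the right size) we may upgrade ``generating'' to ``basis'' via freeness of $X$ and equality of ranks. Without that hypothesis $\Omega$ could be a proper multiset-like overcount. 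So the write-up would be: (i) cardinality bookkeeping using the bijection and $\rank X = \rank \uu X$; (ii) the easy forward implication; (iii) Nakayama for the converse; (iv) a one-line remark on the $\cO = \FF$ case.
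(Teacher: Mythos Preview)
Your proof is correct but takes a different route from the paper. The paper's hint is a determinant argument: fix an $\cO$-basis of $X$, write the elements of $\Omega$ as $\cO$-linear combinations of that basis, and observe that the resulting square matrix has invertible determinant in $\cO$ precisely because its reduction modulo $J(\cO)$ has nonzero determinant in $\FF$ (using that $\cO$ is local). This yields linear independence and spanning in one stroke. Your argument instead uses Nakayama's lemma to lift spanning from $\uu{X}$ to $X$, and then appeals to the fact that a generating set of size $\rank(X)$ in a free module over a commutative ring is automatically a basis. Both approaches rely on the locality of $\cO$ at the same point (a lift of a unit is a unit), and both are entirely standard; the determinant version is slightly more self-contained, while yours makes explicit the role of the bijectivity hypothesis $\Omega \to \uu{\Omega}$ in matching cardinalities, which the paper leaves implicit.
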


\begin{proof}
This easy exercise demands, at most,
just a hint: when $\uu{\Omega}$ is an
$\FF$-basis, express each element
of $\Omega$ as an $\cO$-linear
combination of elements of an
$\cO$-basis for $X$, then consider
the determinant of the square
matrix formed by the coefficients.
\end{proof}

\begin{lem}
\label{2.2}
Let $\Omega$ be a basis for an
$\cO$-module $X$. For each
$w \in \Omega$, let $v_w \in X$. Taking
$\lambda$ to run over the elements of
$\cO$ then, for all except at most
$|\Omega|$ values of the reduction
$\uu{\lambda} \in \FF$, the set
$\{ w + \lambda v_w : w \in \Omega\}$
is a basis for $X$.
\end{lem}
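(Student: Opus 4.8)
The plan is to turn the condition ``$\Omega_\lambda := \{w + \lambda v_w : w \in \Omega\}$ is a basis for $X$'' into the non-vanishing of a single polynomial in $\uu{\lambda}$ over $\FF$ whose constant term is $1$.

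First I would record coordinates. Since $\Omega$ is a basis for $X$, each $v_w$ has a unique expansion $v_w = \sum_{w' \in \Omega} a_{w,w'} w'$, with $a_{w,w'} \in \cO$ when $X$ is free over $\cO$, and $a_{w,w'} \in \FF$ when $X$ is annihilated by $J(\cO)$; put $M = (a_{w,w'})_{w, w' \in \Omega}$. Expanding each element $w + \lambda v_w$ in terms of $\Omega$ shows that the matrix expressing $\Omega_\lambda$ in terms of $\Omega$ is $I + \lambda M$, so $\Omega_\lambda$ is a basis for $X$ if and only if $I + \lambda M$ is invertible --- over $\cO$ in the free case, over $\FF$ in the torsion case, where $\lambda$ acts through $\uu{\lambda}$. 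Since $\cO$ is local with residue field $\FF$, a square matrix over $\cO$ is invertible if and only if its determinant lies outside $J(\cO)$, equivalently if and only if the reduction of its determinant is nonzero in $\FF$; over $\FF$ the analogous statement is trivial. Because reduction modulo $J(\cO)$ is a ring homomorphism, $\det(I + \lambda M)$ reduces to $\det(I + \uu{\lambda}\,\uu{M})$, so in both cases $\Omega_\lambda$ is a basis for $X$ if and only if $\det(I + \uu{\lambda}\,\uu{M}) \neq 0$ in $\FF$.

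Next I would analyse the polynomial $f(t) := \det(I + t\,\uu{M}) \in \FF[t]$. Expanding the determinant (for instance as the sum of $t^k$ times the sum of the principal $k \times k$ minors of $\uu{M}$) shows that $f$ has degree at most $|\Omega|$ and constant term $\det(I) = 1$; in particular $f$ is not the zero polynomial, so it has at most $|\Omega|$ roots in $\FF$. For every $\lambda \in \cO$ whose reduction $\uu{\lambda}$ avoids this root set, $\det(I + \uu{\lambda}\,\uu{M}) \neq 0$ and hence $\Omega_\lambda$ is a basis for $X$. As the excluded set of values of $\uu{\lambda}$ has size at most $|\Omega|$, this is precisely the asserted statement.

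The argument is essentially routine; the only points needing a little care are the uniform treatment of the two possibilities for $\cO$ allowed by our standing hypothesis, and the observation --- valid because $\cO$ is local --- that invertibility of $I + \lambda M$ over $\cO$ is detected after reduction modulo $J(\cO)$, so that ``$\Omega_\lambda$ is a basis'' really is a condition on $\uu{\lambda} \in \FF$ alone. One could instead route the reduction step through Lemma \ref{2.1}, but passing directly to $\det(I + \lambda M)$ avoids having to check separately that the reduction map $X \to \uu{X}$ is injective on $\Omega_\lambda$.
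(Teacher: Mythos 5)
Your proposal is correct and follows essentially the same approach as the paper: write the change-of-basis matrix for $\Omega_\lambda$ in terms of $\Omega$, observe that its determinant is a polynomial in $\uu{\lambda}$ of degree at most $|\Omega|$ with constant term $1$, and bound the number of roots. The only cosmetic difference is that you detect invertibility over $\cO$ directly by reducing the determinant, whereas the paper first invokes Lemma \ref{2.1} to pass to the residue field before introducing the determinant; both handle the two cases for $\cO$ cleanly.
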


\begin{proof}
By the previous lemma, we may assume
that $X$ is annihilated by $J(\cO)$. Hence,
in fact, we may assume that $\cO = \FF$.
Let $w(\lambda) = w + \lambda v_w$. Let
$M(\lambda)$ be the matrix, with rows and
columns indexed by $\Omega$, such that
the $(w', w)$-entry $M_{w', w}(\lambda)$
of $M(\lambda)$ is given by $w(\lambda)
= \sum_{w'} M_{w', w}(\lambda) w'$. The
set $\{ w(\lambda) : w \in \Omega \}$ is
an $\FF$-basis for $M$ if and only if
$M(\lambda)$ is invertible. The function
$\lambda \mapsto \det(M(\lambda))$
is a polynomial function over $\FF$ with
degree at most $\Omega$ and with
nonzero constant term. Therefore,
$\det(M(\lambda)) = 0$ for at most
$|\Omega|$ values of $\lambda$.
\end{proof}

\begin{lem}
\label{2.3}
Let $A$ be an algebra over $\cO$, let
$n$ be the rank of $A$ and let $a \in A$
and $u \in A^\times$. Taking $\lambda$
to run over the elements of $\cO$ then,
for all except at most $n$ values
of the reduction $\uu{\lambda} \in \FF$,
we have $a + \lambda u \in A^\times$.
\end{lem}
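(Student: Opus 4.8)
The plan is to reduce the statement about units in the algebra $A$ to the statement about invertible matrices handled in Lemma 2.2, by exploiting the regular representation of $A$. First I would fix an $\cO$-basis $\Omega$ of $A$ with $|\Omega| = n$, and for each $w \in \Omega$ consider the left multiplication operator $\ell_x : A \rightarrow A$, $y \mapsto xy$, for $x \in A$. Writing $L(x)$ for the matrix of $\ell_x$ with respect to $\Omega$, the map $x \mapsto L(x)$ is an $\cO$-algebra homomorphism $A \rightarrow \Mat_n(\cO)$, and it is injective because $A$ is unital (if $\ell_x = 0$ then $x = x \cdot 1 = 0$). The key observation is then that $x \in A^\times$ if and only if $\det L(x) \in \cO^\times$: indeed, if $x$ is a unit then $L(x) L(x^{-1}) = L(1) = I_n$ forces $\det L(x)$ to be a unit; conversely, if $\det L(x) \in \cO^\times$ then $\ell_x$ is a bijection of $A$, so there is $y \in A$ with $xy = 1$, and a symmetric argument using right multiplication (or the finite-dimensionality of $A$ over $\cO$, noting $\cO$ is local) gives a two-sided inverse.

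With this in hand, consider the polynomial $f(\lambda) = \det L(a + \lambda u) = \det\bigl(L(a) + \lambda L(u)\bigr) \in \cO[\lambda]$. Since $L$ is linear, the entries of $L(a) + \lambda L(u)$ are affine in $\lambda$, so $f$ has degree at most $n$ in $\lambda$. Because $u \in A^\times$, the constant term $f(0) = \det L(a+0\cdot u)$—wait, more precisely I want the top-degree behavior controlled by $u$: the coefficient of $\lambda^n$ in $f$ is $\det L(u)$, which is a \emph{unit} in $\cO$ by the equivalence just established. Hence the reduction $\uu{f} \in \FF[\lambda]$ is a nonzero polynomial of degree exactly $n$. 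A nonzero polynomial of degree $n$ over the field $\FF$ has at most $n$ roots, so there are at most $n$ values of $\uu{\lambda} \in \FF$ for which $\uu{f}(\uu\lambda) = 0$. For every other value of $\uu\lambda$ we have $\uu{f}(\uu\lambda) \neq 0$, i.e. $f(\lambda) = \det L(a+\lambda u) \notin J(\cO)$, i.e. $\det L(a+\lambda u) \in \cO^\times$ (as $\cO$ is local), i.e. $a + \lambda u \in A^\times$. That is the claim.

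I do not expect any serious obstacle here; the only point requiring a little care is the equivalence $x \in A^\times \iff \det L(x) \in \cO^\times$, which uses that $\cO$ is local (so that a matrix over $\cO$ is invertible iff its determinant is not in $J(\cO)$) and that $A$ is unital and finitely generated free over $\cO$. One could alternatively phrase the whole argument after reducing mod $J(\cO)$ via Lemma 2.1, working directly over $\FF$, where ``$\det L(x) \neq 0$'' characterizes units outright; but it is cleaner to keep track of the unit coefficient of $\lambda^n$ before reducing, since that is precisely what guarantees $\uu{f}$ is nonzero. Either way, the heart of the matter is simply that the regular representation turns the unit condition into the nonvanishing of a degree-$n$ polynomial whose leading coefficient is invertible.
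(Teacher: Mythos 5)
Your proposal is correct and takes essentially the same approach as the paper: both proofs run through the regular representation of $A$ and reduce the matter to counting roots of a degree-$n$ polynomial over $\FF$ (you phrase it as $\uu{\lambda}$ being a non-root of $\det(L(a)+\lambda L(u))$; the paper phrases it as $\lambda$ not being an eigenvalue of $\rho(-u^{-1}a)$, which is the same polynomial up to the unit factor $\det L(u)$). The only cosmetic difference is that the paper reduces to $\cO=\FF$ at the outset, whereas you carry $\cO$ along and appeal to locality at the end; the argument is identical in substance.
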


\begin{proof}
Since units of $\uu{A}$ lift only to units
in $A$, we may assume that $\cO = \FF$.
Let $\rho : A \rightarrow \End_\cO(A)$ be
the regular representation. We have
$a + \lambda u \in A^\times$ if and
only if $\rho(a + \lambda u) \in
\End_\cO(A)^\times$, equivalently,
$\rho(- u^{-1} a) - \lambda . \id_A \in
\End_\cO(A)^\times$, in other words,
$\lambda$ is not an eigenvalue of
$\rho(- u^{-1} a)$.
\end{proof}

When discussing interior $G$-algebras, we
shall freely use notation and terminology
from Linckelmann \cite[Chapter 5]{Lin18},
but let us reiterate a few conventions. Let
$A$ be an interior $G$-algebra. For
$g \in G$ and $a \in A$, we write
$ga = \sigma_A(g)a$, similarly for $ag$,
where $\sigma_A : G \rightarrow A^\times$
is the structural homomorphism of $A$.
As usual, we regard $A$ as an
$\cO G$-module via $\sigma_A$. That is,
$g$ sends $a$ to ${}^g a = g a g^{-1}$.
We also regard $A$ as an
$\cO(G {\times} G)$-module, with
$(f, g) \in G {\times} G$ sending $a$ to
$f a g^{-1}$.

\begin{thm}
\label{2.4} Let $A$ be an interior
$G$-algebra. Then $A$ has a unital
$G {\times} G$-stable basis if and only
if $A$ has a $G {\times} G$-stable basis
$\Omega$ such that, for all
$w \in \Omega$, the stabilizer
$N_{G \times G}(w)$ fixes a unit of $A$.
\end{thm}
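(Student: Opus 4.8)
The forward implication is trivial: if $\Omega$ is a unital $G\times G$-stable basis, then every $w\in\Omega$ is itself a unit, and $N_{G\times G}(w)$ certainly fixes the unit $w$. So the content is in the converse. The plan is to start from a $G\times G$-stable basis $\Omega$ in which each orbit representative $w$ has its stabilizer $N_{G\times G}(w)$ fixing \emph{some} unit $u_w\in A^\times$, and to perturb $\Omega$ orbit-by-orbit into a genuinely unital stable basis without destroying stability.

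First I would pick one representative $w_i$ from each $G\times G$-orbit $\cO_i$ on $\Omega$, together with a unit $u_i$ fixed by $N_{G\times G}(w_i)$. The natural candidate for the replacement is $w_i(\lambda) = w_i + \lambda u_i$ for a scalar $\lambda\in\cO$ to be chosen, and then to propagate this choice across the orbit by the $G\times G$-action: if $x\in\cO_i$ is written as $x=(f,g)w_i$ then set $x(\lambda) = (f,g)\bigl(w_i+\lambda u_i\bigr) = f w_i g^{-1} + \lambda\, f u_i g^{-1}$. The key point is that this is \emph{well-defined}: if $(f,g)w_i = (f',g')w_i$, then $(f^{-1}f', g^{-1}g')\in N_{G\times G}(w_i)$, hence it fixes $u_i$ as well, so $f u_i g^{-1} = f' u_i g'^{-1}$. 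Thus we get a well-defined $G\times G$-stable set $\Omega(\lambda) = \{x(\lambda): x\in\Omega\}$, and the reduction map $A\to\uu A$ restricts to a bijection $\Omega(\lambda)\to\uu{\Omega(\lambda)}$ provided $\uu\lambda$ avoids finitely many values (using that $\uu\Omega$ is an $\FF$-basis).

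Next I need two things simultaneously for a suitable $\lambda$: that $\Omega(\lambda)$ is an $\cO$-basis of $A$, and that every element of $\Omega(\lambda)$ is a unit. For the basis property, I would apply Lemma~\ref{2.2} with $v_w$ chosen as the appropriate translate of the relevant $u_i$ — this rules out at most $|\Omega|$ bad values of $\uu\lambda$. For unitality, Lemma~\ref{2.3} applied to $a=w_i$, $u=u_i$ (after checking each translate $f u_i g^{-1}$ is again a unit, which is automatic since $\sigma_A(f),\sigma_A(g)\in A^\times$) shows that $x(\lambda)=f w_i g^{-1}+\lambda\,(f u_i g^{-1})$ is a unit for all but at most $n=\rank A$ values of $\uu\lambda$, and since $\Omega$ has only finitely many orbits there are only finitely many translate-classes to exclude. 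Since $\FF$ is algebraically closed, in particular infinite, we may choose $\uu\lambda$ — and then a lift $\lambda\in\cO$ — avoiding all the finitely many excluded values; for such $\lambda$, $\Omega(\lambda)$ is a unital $G\times G$-stable basis.

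The main obstacle I anticipate is the well-definedness step: one must verify that perturbing by $\lambda u_i$ respects the orbit structure, which is exactly why the hypothesis asks for a unit fixed by the \emph{whole} stabilizer $N_{G\times G}(w)$ rather than merely some unit near $w$; getting this bookkeeping right — and confirming that only finitely many values of $\uu\lambda$ are forbidden in total (finitely many orbits, each contributing finitely many exclusions via Lemmas~\ref{2.2} and~\ref{2.3}) — is the crux. Everything else is routine application of the three preceding lemmas together with the fact that $\FF$ is infinite.
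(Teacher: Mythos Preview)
Your proposal is correct and follows essentially the same route as the paper: choose orbit representatives, pick a stabilizer-fixed unit for each, propagate across orbits via the $G\times G$-action (well-defined precisely because the stabilizer fixes the unit), and then invoke Lemmas~\ref{2.2} and~\ref{2.3} to find a scalar $\lambda$ avoiding the finitely many bad residues in the infinite field $\FF$. Your treatment of the well-definedness step is in fact slightly more explicit than the paper's.
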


\begin{proof}
One direction is obvious. Conversely,
suppose there exists $\Omega$ as
specified. For $a \in A$, we write
$N(a) = N_{G \times G}(a)$. Letting
$w$ run over representatives of the
$G {\times} G$-orbits of $\Omega$, we
choose elements $u(w) \in A^\times
\cap A^{N(w)}$. Now letting $w$ run
over all the elements of $\Omega$, we
let $w \mapsto u(w)$ be the unique
function such that $u(f w g^{-1}) =
f u(w) g^{-1}$ for all $w$ and all
$f, g \in G$. We have $N(u(f w g^{-1}))
= {}^{(f, g)} N(u(w))$. So $N(w) = N(u(w))$
for all $w \in \Omega$.

Let $n = |\Omega|$. Lemma \ref{2.3}
implies that, letting $\lambda$ run
over the elements of $\cO$ then, for
each $w \in \Omega$, the element
$w_\lambda - w + \lambda u(w)$ is
a unit for all except at most $n$ values
of $\oo{\lambda}$. Lemma \ref{2.2}
implies that $A$ has basis
$\Omega_\lambda = \{ w_\lambda :
w \in \Omega \}$ for all except at
most $n$ values of $\oo{\lambda}$.
Therefore, $\Omega_\lambda$ is a
unital basis for all except at most
$n^{n+1}$ values of $\lambda$.
\end{proof}

\begin{cor}
\label{2.5}
Let $e$ be an idempotent of $Z(\cO G)$.
Let $S$ be a Sylow $p$-subgroup of $G$.
Then the algebra $\cO G e$ has a
unital $S {\times} S$-stable basis.
\end{cor}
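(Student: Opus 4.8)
The plan is to deduce this from Theorem \ref{2.4}, applied to $\cO G e$ regarded as an interior $S$-algebra by restricting the interior $G$-algebra structure $g \mapsto g e$. So it will suffice to produce one $S \times S$-stable basis $\Omega$ of $\cO G e$ with the property that, for every $w \in \Omega$, the stabilizer $N_{S \times S}(w)$ fixes some unit of $\cO G e$.

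First I would note that, $S$ being a $p$-group, $\cO G$ is the permutation $\cO(S \times S)$-module on the $(S,S)$-biset $G$, hence splits as $\cO G = \bigoplus_{g} \cO[S g S]$ with $g$ ranging over representatives of the double cosets $S \backslash G / S$. Each summand $\cO[SgS]$ is of the form $\cO[(S \times S)/N]$; since $S \times S$ is a $p$-group, such a module is indecomposable (modulo $J(\cO)$ it has one-dimensional, hence simple, top, and indecomposability lifts). Multiplication by the central idempotent $e$ is an idempotent $\cO(S \times S)$-endomorphism of $\cO G$ with image $\cO G e$, so $\cO G e$ is a direct summand; by the Krull--Schmidt theorem there is an $\cO(S \times S)$-module isomorphism $\theta$ from a direct sum $\bigoplus_{g \in X} \cO[SgS]$, over some multiset $X$ of double-coset representatives, onto $\cO G e$. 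I then take $\Omega = \theta\big(\bigsqcup_{g \in X} SgS\big)$, which is an $S \times S$-stable $\cO$-basis of $\cO G e$.

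The stabilizer condition is then immediate. If $w \in \Omega$, write $w = \theta(x)$ with $x$ lying in one of the double cosets; equivariance of $\theta$ gives $N_{S \times S}(w) = N_{S \times S}(x) = \{ (u,v) \in S \times S : u x v^{-1} = x \}$. Since $e$ is central and idempotent, $x e$ is a unit of $\cO G e$ with inverse $x^{-1} e$; and for $(u,v) \in N_{S \times S}(x)$ one has $(u,v) \cdot (x e) = (u e)(x e)(v^{-1} e) = u x v^{-1} e = x e$. Thus $N_{S \times S}(w)$ fixes the unit $x e$, and Theorem \ref{2.4} supplies a unital $S \times S$-stable basis for $\cO G e$.

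The only input that is not purely formal is the indecomposability of the transitive permutation modules $\cO[SgS]$ --- equivalently, the standard fact that over a $p$-group every direct summand of a permutation module is again a permutation module --- and I expect this to be the one place needing care, chiefly to handle the mixed-characteristic case $\cO \neq \FF$; everything after it reduces to the trivial observation that $N_{S \times S}(x)$ fixes the unit $x e$.
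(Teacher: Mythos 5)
Your proof is correct and takes essentially the same approach as the paper: identify a stable basis of $\cO G e$ with a union of double cosets via Krull--Schmidt, observe that the stabilizer of a basis element coincides with $N_{S\times S}(g)$ for some $g\in G$, note that $ge$ is a unit of $\cO G e$ with the same stabilizer, and invoke Theorem~\ref{2.4}. You have merely spelled out the double-coset decomposition and indecomposability step that the paper leaves implicit.
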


\begin{proof}
Let $A = \cO G e$. We can regard $A$
as a direct summand of the permutation
$\cO(S {\times} S)$-module $\cO G$.
So $A$ has an $S {\times} S$-stable
basis $\Omega$, moreover, any such
$\Omega$ is isomorphic to an
$S {\times} S$-subset of the basis $G$
of $\cO G$. Given $w \in \Omega$,
then $N_{S \times S}(w) =
N_{S \times S}(g)$ for some $g \in G$.
On the other hand, viewing $A$ as an
algebra, the unit $ge \in A$ also has
stablizer $N_{S \times S}(ge) =
N_{S \times S}(g)$. The required
conclusion now follows by applying
the latest theorem to $A$ as an
interior $S$-algebra.
\end{proof}

\section{Green's Indecomposability
Criterion, revisited}
\label{3}

The little theorem in this section, and
the subsequent remark, will be needed
in Section \ref{5}.
Let $D$ be a finite $p$-group and $A$
a $D$-algebra. A theorem of Puig, in
Linckelmann \cite[5.12.20]{Lin18}, implies
that, given a point $\alpha$ of $D$ on
$A$ and a defect pointed subgroup
$P_\gamma$ of $D_\alpha$, then any
element of $\alpha$ can be expressed
in the form $\tr_P^D(i)$, where $i \in
\gamma$ and the idempotents ${}^g i$
of $A$ are mutually orthogonal as $gP$
runs over the left cosets of $P$ in $D$. In
\cite[5.12]{Lin18}, it is explained how the
theorem can be seen as a generalization
of Green's Indecomposability Criterion.

\begin{thm}
\label{3.1}
Given a $D$-algebra $A$, a local
pointed group $P_\gamma$ and points
$\alpha$ and $\beta$ of $D$ on $A$
such that $P_\gamma$ is a defect
pointed subgroup of $D_\alpha$ and
$D_\beta$, then $\alpha = \beta$.
\end{thm}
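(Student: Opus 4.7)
The strategy is to apply Puig's theorem (Linckelmann 5.12.20, as stated immediately before Theorem~\ref{3.1}) to realize representatives of $\alpha$ and $\beta$ as relative traces of local idempotents from the common point $\gamma$, and then to construct explicit elements of $A^D$ witnessing that these representatives are equivalent idempotents.

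Pick $a \in \alpha$ and $b \in \beta$. By Puig's theorem, write $a = \tr_P^D(i_0)$ and $b = \tr_P^D(i_1)$ where $i_0, i_1 \in \gamma$ and each of the $D$-conjugate families $\{{}^g i_0 : gP \in D/P\}$ and $\{{}^g i_1 : gP \in D/P\}$ consists of pairwise orthogonal idempotents. Since $\gamma$ is a single point of $P$, there is a unit $u \in (A^P)^\times$ with $i_1 = u i_0 u^{-1}$.

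Fix a transversal $T$ for $D/P$ in $D$ and, for $g \in T$, set $u_g = {}^g u$, $j_g = {}^g i_0$, $k_g = {}^g i_1$. Define $w = \sum_{g \in T} u_g j_g$ and $w' = \sum_{g \in T} j_g u_g^{-1}$. Because $u, i_0 \in A^P$, each summand depends only on the coset $gP$, so $w$ and $w'$ are well-defined; conjugation by $d \in D$ just permutes the summands, so $w, w' \in A^D$. Using the orthogonality of the $j_g$'s together with $u_g j_g u_g^{-1} = k_g$, one finds $ww' = \sum_g k_g = b$. For $w'w = \sum_{g,h} j_h u_h^{-1} u_g j_g$, the diagonal terms collapse to $a$. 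Each off-diagonal term with $g \neq h$ equals $h u^{-1} (i_1 x i_1) u g^{-1}$, where $x = h^{-1}g$, after using $u i_0 = i_1 u$ and $i_0 u^{-1} = u^{-1} i_1$; since $xP \neq P$, the orthogonality of the $D$-conjugates of $i_1$ gives $i_1 \cdot x i_1 x^{-1} = 0$, equivalently $i_1 x i_1 = 0$. Hence $w'w = a$.

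The elements $w, w' \in A^D$ with $ww' = b$ and $w'w = a$ exhibit $a$ and $b$ as equivalent primitive idempotents of $A^D$. Since $A^D$ is a finitely generated algebra over the complete local Noetherian ring $\cO$, it is semiperfect; in a semiperfect ring, equivalent primitive idempotents are conjugate by a unit, forcing $\alpha = \beta$. The main obstacle is the off-diagonal vanishing in the computation of $w'w$: the identity $i_1 x i_1 = 0$ for $x \notin P$ is precisely where the orthogonality condition in Puig's theorem (and hence the defect-pointed-subgroup hypothesis on $P_\gamma$) is essential, making this the crux of the argument.
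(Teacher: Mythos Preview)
Your argument is correct and is essentially the paper's own proof: both apply Puig's theorem to write $a=\tr_P^D(i_0)$, $b=\tr_P^D(i_1)$ with $i_0,i_1\in\gamma$, pick a unit of $(A^P)^\times$ conjugating one to the other, and form the relative traces $w=\tr_P^D(u i_0)$, $w'=\tr_P^D(i_0 u^{-1})$ (the paper's $\tr_P^D(irj)$ and $\tr_P^D(jr^{-1}i)$) to exhibit $a$ and $b$ as associate in $A^D$. One notational caveat: $A$ is only a $D$-algebra, not an interior one, so expressions like ``$h u^{-1}(i_1 x i_1) u g^{-1}$'' and ``$i_1 x i_1$'' are not literally meaningful; the clean way to phrase your off-diagonal computation is $j_h u_h^{-1} u_g j_g = u_h^{-1}\,({}^h i_1)({}^g i_1)\,u_g = 0$, using $i_0 u^{-1}=u^{-1}i_1$ and $u i_0=i_1 u$ before applying ${}^h(\,\cdot\,)$ and ${}^g(\,\cdot\,)$.
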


\begin{proof}
Let $a \in \alpha$ and $b \in \beta$.
By Puig's generalization of Green's
Indecomposability Theorem, there exist
$i, j \in \gamma$ such that
$a = \tr_P^D(i)$ and $b = \tr_P^D(j)$,
with ${}^g i . i = 0 = {}^g j . j$ for all
$g \in D - P$. Let $r \in (A^P)^\times$
such that $i = {}^r j$. Define
$u = \tr_P^D(irj)$ and $v = \tr_P^D(jri)$.
By direct calculation, $uv = i$ and
$vu = j$. So the idempotents $i$ and
$j$ of $A^D$ are associate, hence
conjugate.
\end{proof}

Proof of the following remark is a
straightforward application of
Mackey decomposition.

\begin{rem}
\label{3.2}
Given a $D$-algebra $A$, a point $\alpha$
of $D$ on $A$ and a defect pointed
subgroup $P_\gamma$ of $D_\alpha$,
then $m_A(P_\gamma, D_\alpha) =
|N_D(P_\gamma) : P|$.
\end{rem}

\section{Fusions in interior $G$-algebras}
\label{4}

We shall discuss Puig's notion of a fusion
between pointed groups. Some of the
criteria we shall give for the existence of
fusions do not appear to be widely known.
Throughout this section, we let $A$ be
an interior $G$-algebra.

Given a group isomorphism $\theta$, we
write $\cod(\theta)$ for the codomain,
$\dom(\theta)$ for the domain, and we
define $\Delta(\theta) = \{ (\theta(v), v)
: v \in \dom(\theta) \}$ as a subgroup of
$\cod(\theta) \times \dom(\theta)$. Note
that, given $U, V \leq G$ and a group
isomorphism $\phi : V \rightarrow U$, then
the $\Delta(\phi)$-fixed $\cO$-submodule
$A^\phi = A^{\Delta(\phi)}$ is the
$\cO$-submodule of elements $a \in A$
such that $\phi(v) a = av$ for all $v \in V$.
Condition (a) in the next lemma was
considered by Puig \cite[2.5, 2.12]{Pui86}.
He used it in his definition of a fusion, as he
called it, an ``$A$-fusion''.

\begin{lem}
\label{4.1}
Let $U_\mu$ and $V_\nu$ be pointed
groups on $A$. Let $\phi : V \rightarrow U$
be a group isomorphism. Choose
$i \in \mu$ and $j \in \nu$. Then the
following two conditions are equivalent:

\noin {\bf (a)} there exists $r \in A^\times$
such that $\phi(v) i = {}^r (vj)$,

\noin {\bf (b)} there exist $s \in i A^\phi j$
and $s' \in j A^{\phi^{-1}}i$ such that
$i = s s'$ and $j = s' s$.

Moreover, the equivalent conditions
(a) and (b) are independent of the
choices of $i$ and $j$.
\end{lem}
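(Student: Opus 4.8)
The plan is to prove the equivalence (a) $\Leftrightarrow$ (b) first for a fixed choice of $i \in \mu$ and $j \in \nu$, and then deal with independence of the choices at the end, since once the equivalence is established for one pair it will transfer cheaply to any other pair.

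\textbf{From (b) to (a).} Suppose $s \in i A^\phi j$ and $s' \in j A^{\phi^{-1}} i$ satisfy $i = ss'$ and $j = s's$. The natural candidate for the unit $r$ is not $s$ itself (which need not be invertible in $A$), but rather $r = s + (1 - i)$, or more symmetrically $r = s + s' + (1-i) + (1-j)$ type corrections — I would check that $r = 1 - i - j + s + s'$ is a two-sided inverse of itself-transposed, i.e.\ that $(1-i-j+s+s')(1-j-i+s'+s) = 1$, using $ss'=i$, $s's=j$, $si = 0$? No: $s \in iA^\phi j$ means $is = s = sj$, so $s s = s j \cdot$ (nothing forces $sj$-landing-in-$j$); one must be careful. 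The cleanest route: set $r = 1 - i + s$ and $r'' = 1 - j + s'$ and verify $r r'' = 1$ and $r'' r = 1$ by multiplying out and collapsing cross terms via $is = s = sj$, $js' = s' = s'i$, $ss' = i$, $s's = j$. That gives $r \in A^\times$. Then $\phi(v) i = \phi(v) s s'$; since $s \in A^\phi$ we have $\phi(v) s = s v$, so $\phi(v) i = s v s'$; and I want this to equal $r (v j) r^{-1} = (1-i+s)(vj)(1-j+s')$. Expanding and using $vj \in jA^\phi$-adjacent relations ($j \cdot vj = vj$ since $v \in V$ and $j$ is $V$-fixed) should collapse this to $s v s'$. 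So $\phi(v) i = {}^r(vj)$, which is (a).

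\textbf{From (a) to (b).} Given $r \in A^\times$ with $\phi(v) i = {}^r(vj)$ for all $v \in V$, put $s = i r j$ and $s' = j r^{-1} i$. One checks $s \in i A^\phi j$: clearly $is = s = sj$, and $\phi(v) s = \phi(v) i r j = {}^r(vj) \cdot rj = r v j r^{-1} r j = r v j j = r (vj) = \cdots$; I need $\phi(v)s = sv$, i.e.\ $\phi(v) i r j = i r j v$; starting from $\phi(v) i r j = r(vj)r^{-1} \cdot r j = r (vj) j = r(vj)$ and $irjv = ir(jv)$ — using $v \in V$, $jv = vj$ as elements (since $\sigma_A(V)$ centralizes... no, $j \in A^V$ means ${}^v j = j$, i.e.\ $vj = jv$ as products in $A$), and plugging $\phi(v)i = r(vj)r^{-1}$: then $irj v = irvj = ir v j$, while $\phi(v) i r j = r(vj)r^{-1} rj = r v j j = rvj$; and $irvj = i(rvj)$; but $\phi(1)i = i = rjr^{-1}$ forces $irj = rj$ hmm — one sees $i = \phi(1)i = {}^r j$, so $ir = rj$ and $irj = rj$, $iri^{-1}$... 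Using $ir = rj$: $irvj = rjvj = r(vj)$ (since $jvj = vj$), matching. So $s \in A^\phi$. Symmetrically $s' \in j A^{\phi^{-1}} i$. Then $s s' = irj \cdot jr^{-1}i = irjr^{-1}i = i \cdot i \cdot i = i$ using $rjr^{-1} = i$; and $s's = jr^{-1}i r j = jr^{-1}(ir)j = jr^{-1}rjj = j$. That is (b).

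\textbf{Independence of $i$ and $j$.} If $i, i' \in \mu$ then $i' = {}^{r_0} i$ for some $r_0 \in (A^U)^\times$, and similarly $j' = {}^{r_1} j$ with $r_1 \in (A^V)^\times$; since $r_0$ commutes with $\phi(v)$ for $v \in V$ (as $\phi(v) \in U$ and $r_0 \in A^U$) and $r_1$ commutes with $v$, conjugating the witness $r$ by $r_0 r_1^{-1}$ (appropriately) produces a witness for the primed idempotents, and likewise $(s,s') \mapsto (r_0 s r_1^{-1}, r_1 s' r_0^{-1})$ transports (b); so each of (a), (b) is choice-independent and the equivalence is too. \textbf{The main obstacle} I anticipate is purely bookkeeping: keeping straight which of the relations $is=s=sj$, $\phi(v)s = sv$, $ir = rj$, $rjr^{-1}=i$ is available at each step, since a wrong association of an idempotent kills a term that should survive. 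No conceptual difficulty is expected beyond this careful manipulation of idempotents and the unit $r$.
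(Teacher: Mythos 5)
Your direction (a)\(\Rightarrow\)(b) (take \(s=irj\), \(s'=jr^{-1}i\)) and your treatment of independence (transport by units in \((A^U)^\times\) and \((A^V)^\times\)) both match the paper and are correct.

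The gap is in (b)\(\Rightarrow\)(a). Your proposed unit \(r = 1-i+s\) with putative inverse \(r''=1-j+s'\) does not satisfy \(rr''=1\) in general. Expanding and using only the relations \(is=s=sj\), \(js'=s'=s'i\), \(ss'=i\), \(s's=j\), one gets
\[
rr'' = 1 - j + s' + ij - is' = 1 + (1-i)(s'-j),
\]
and the leftover term \((1-i)(s'-j)\) is not forced to vanish: the idempotents \(i\) and \(j\) need not be orthogonal, and there is no identity making \((1-i)j\) cancel against \((1-i)s'\). The intuition "\(1-i\) fills in the complement of \(i\)" works for correcting a \emph{single} idempotent, but here you are trying to simultaneously invert an element that intertwines two idempotents sitting in incompatible Peirce corners, and the \(1-i\) block and the \(1-j\) block are misaligned.

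What is actually needed, and what the paper supplies, is a unit that identifies those two complementary corners. Since \(A\) is semiperfect (finitely generated over the complete local ring \(\cO\)), associate idempotents are conjugate: the relations \(ss'=i\), \(s's=j\) already say \(i\) and \(j\) are associate, hence there exists \(q\in A^\times\) with \(i=qjq^{-1}\). The paper then takes \(r = s + (1-i)q(1-j)\) and \(r' = s' + (1-j)q^{-1}(1-i)\); now the cross terms genuinely die because \(s(1-j)=0\), \((1-j)s'=0\), and \((1-i)q(1-j)q^{-1}(1-i)=1-i\), giving \(rr'=i+(1-i)=1\) and symmetrically \(r'r=1\). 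The step you are missing is precisely the appeal to semiperfectness to produce \(q\); without it there is no reason for any combination of \(s,s',i,j\) alone to be invertible.
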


\begin{proof}
Assuming (a) and putting $s = irj$ and
$s' = j r^{-1} i$, we deduce (b). Conversely,
assume (b). Since associate idempotents of
a semiperfect ring are conjugate, there exists
$q \in A^\times$ such that $i = q j q^{-1}$.
Putting $r = s + (1 - i)q(1 - j)$ and
$r' = s' + (1 - j) q^{-1} (1 - i)$, then
$r r' = 1 = r' r$. So $r \in A^\times$. A
straightforward manipulation yields the
equality in (a). Confirmation of the rider
is routine.
\end{proof}

When conditions (a) and (b) hold, we
call $\phi$ an {\bf isofusion} $V_\nu
\rightarrow U_\mu$ and we call $(s, s')$
a {\bf $\phi$-witness} $j \mapsto i$.
Witnesses can be combined in the
following way. Let $U_\mu$, $V_\nu$,
$W_\omega$ be pointed groups on $A$.
Let $i \in \mu$, $j \in \nu$,
$k \in \omega$. Let $(s, s') :
j \mapsto i$ and $(t, t') :
k \mapsto j$ be witnesses for group
isomorphisms $\phi : V \rightarrow U$
and $\psi : W \rightarrow V$, respectively.
Then $(st, t's') : k \mapsto i$ is a
witness for $\phi \psi$. Also, $(s', s)$
is a witness for $\phi^{-1}$.

The next two lemmas are implicit in
\cite[2.5, 2.6, 2.7]{Pui86}. They can
be proved easily and routinely using
condition (a) in Lemma \ref{4.1}.
Alternatively, they can be proved very
quickly using the above comments
about witnesses.

\begin{lem}
\label{4.2}
{\rm (Puig.)}
Let $U, V \leq G$ and $\phi : V \rightarrow U$
a group isomorphism.

\noin {\bf (1)} For each point $\nu$ of $V$
on $A$, there is at most one point $\mu$
of $U$ on $A$ such that $\phi$ is an
isofusion $V_\nu \rightarrow U_\mu$.

\noin {\bf (2)} For each point $\mu$ of $U$
on $A$, there is at most one point $\nu$
of $V$ on $A$ such that $\phi$ is an
isofusion $V_\nu \rightarrow U_\mu$.

\noin {\bf (3)} Given points $\mu$ of $U$
and $\nu$ of $V$ on $A$ such that
$\phi$ is an isofusion $V_\nu \rightarrow
U_\mu$, then $\phi^{-1}$ is an
isofusion $U_\mu \rightarrow V_\nu$
\end{lem}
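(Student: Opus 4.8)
The plan is to prove all three parts in one stroke using the witness calculus recorded just above the lemma statement, which packages the combination and inversion of isofusions. The key observation is that a $\phi$-witness $j \mapsto i$ gives, among other things, an invertible element $s$ of $A$ with $ss' = i$, $s's = j$, and $\phi(v)s = sv$ for all $v \in V$; this is the content of condition (b) of Lemma \ref{4.1}, which we are free to test on any chosen $i \in \mu$, $j \in \nu$.

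For part (3), suppose $(s,s') : j \mapsto i$ is a $\phi$-witness. Then $(s',s)$ is a $\phi^{-1}$-witness $i \mapsto j$: indeed $s' \in j A^{\phi^{-1}} i$ and $s \in i A^\phi j$ by the definition of $A^\phi$ and the symmetry of condition (b), and $s's = j$, $ss' = i$ are exactly the two idempotent equations required, so $\phi^{-1}$ is an isofusion $U_\mu \rightarrow V_\nu$. For part (1), fix a point $\nu$ of $V$ and suppose $\phi$ is an isofusion $V_\nu \rightarrow U_\mu$ and also $V_\nu \rightarrow U_{\mu'}$. Pick $j \in \nu$, $i \in \mu$, $i' \in \mu'$ and witnesses $(s,s') : j \mapsto i$ and $(t,t') : j \mapsto i'$. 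Using part (3), $(s',s) : i \mapsto j$ is a $\phi^{-1}$-witness; composing witnesses along $i \mapsto j \mapsto i'$ produces a witness $(ts', st') : i \mapsto i'$ for $\phi^{-1}\phi = \id_U$. Hence $\id_U$ is an isofusion $U_\mu \rightarrow U_{\mu'}$, which by condition (a) of Lemma \ref{4.1} means $i$ and $i'$ are conjugate in $A^\times$ by an element of $A^U$ (the $\id_U$-fixed submodule $A^{\Delta(\id_U)}$ is just $A^U$), forcing $\mu = \mu'$. Part (2) is the mirror image: apply part (1) to $\phi^{-1}$ using part (3) to pass back and forth.

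The only point demanding any care is the final reduction in parts (1) and (2): that $\id_U$ being an isofusion $U_\mu \rightarrow U_{\mu'}$ forces $\mu = \mu'$. This is where the definition of a point being attached to a specific local idempotent conjugacy class is used — one checks that condition (a) of Lemma \ref{4.1} for $\phi = \id_U$ says precisely $i = {}^r i'$ with $r$ normalizing the $U$-action, and since $i, i' \in A^U$ we may as well take $r \in (A^U)^\times$, so $i$ and $i'$ lie in the same point of $U$. I expect this to be routine rather than a genuine obstacle; the substance of the lemma is entirely carried by the witness formalism, which is why the authors remark that the proof "can be proved very quickly using the above comments about witnesses." One should also note that the rider in Lemma \ref{4.1} — independence of the choices of $i$ and $j$ — is what allows the witnesses above to be chosen compatibly with whatever representatives are convenient at each stage.
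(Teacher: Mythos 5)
Your proof is correct and follows exactly the route the paper points to: the paper gives no real proof of Lemma \ref{4.2}, only the remark that the three parts can be verified ``easily and routinely using condition (a) in Lemma \ref{4.1}'' or ``very quickly using the above comments about witnesses,'' and you chose the witness calculus, which is the cleaner of the two. Part (3) is exactly the paper's line ``Also, $(s', s)$ is a witness for $\phi^{-1}$,'' and parts (1) and (2) follow from composing witnesses, which is what the paper's composition formula $(st, t's') : k \mapsto i$ is set up to do.

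Two small points. First, in the composition step the composite of $i \mapsto j$ by $\phi^{-1}$ followed by $j \mapsto i'$ by $\phi$ is $\phi \circ \phi^{-1} = \id_U$, not $\phi^{-1}\phi$ (which would be $\id_V$); the witness pair $(ts', st') : i \mapsto i'$ you wrote is nonetheless correct, so this is only a notational slip. Second, your final reduction---that $\id_U$ being an isofusion $U_\mu \rightarrow U_{\mu'}$ forces $\mu = \mu'$---is most cleanly extracted from condition (b) of Lemma \ref{4.1}: the witness supplies $s \in i A^U i'$ and $s' \in i' A^U i$ with $ss' = i$ and $s's = i'$, so $i$ and $i'$ are associate idempotents in the semiperfect ring $A^U$, hence conjugate there, hence lie in the same point. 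Your phrasing via condition (a) (``we may as well take $r \in (A^U)^\times$'') is more a gesture toward this than a derivation---the $r$ in condition (a) is a priori only in $A^\times$, not in $(A^U)^\times$---but you flag the step as routine and it genuinely is, so this is a matter of exposition rather than a gap.
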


\begin{lem}
\label{4.3}
{\rm (Puig.)}
There is a groupoid whose objects are
the pointed groups on $A$ and whose
isomorphisms are the isofusions, the
composition being the usual composition
of group isomorphisms. In other words,
given a pointed group $U_\mu$ on $A$,
then the identity automorphism $\id_U$
is an isofusion $U_\mu \rightarrow U_\mu$,
moreover, the isofusions between the
pointed groups on $A$ are closed under
composition and inversion.
\end{lem}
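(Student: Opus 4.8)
The plan is to deduce everything from the two structural observations made just before the lemma: that the identity map is always an isofusion via a trivial witness, and that witnesses compose and invert. First I would treat identity: given a pointed group $U_\mu$ on $A$, pick $i \in \mu$ and check that $(i,i)$ is an $\id_U$-witness $i \mapsto i$. Indeed $i \in i A^{\id_U} i$ since $i$ centralizes no more than is needed — explicitly $\id_U(u) i = u i = i u$ because $i \in A^U$ — so $i \in A^{\id_U}$, and trivially $i = i \cdot i$. By Lemma \ref{4.1}, $\id_U$ is an isofusion $U_\mu \rightarrow U_\mu$. This supplies the identity morphism at every object.

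Next I would verify closure under composition. Suppose $\phi : V \rightarrow U$ is an isofusion $V_\nu \rightarrow U_\mu$ and $\psi : W \rightarrow V$ is an isofusion $W_\omega \rightarrow V_\nu$. Choose $i \in \mu$, $j \in \nu$, $k \in \omega$. By Lemma \ref{4.1}(b) there are witnesses $(s,s') : j \mapsto i$ for $\phi$ and $(t,t') : k \mapsto j$ for $\psi$. By the paragraph following Lemma \ref{4.1}, $(st, t's') : k \mapsto i$ is a $\phi\psi$-witness, and since $\phi\psi : W \rightarrow U$ is a group isomorphism, Lemma \ref{4.1} gives that $\phi\psi$ is an isofusion $W_\omega \rightarrow U_\mu$. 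Closure under inversion is even shorter: if $(s,s') : j \mapsto i$ witnesses $\phi$, then $(s',s) : i \mapsto j$ witnesses $\phi^{-1}$ — this is again from the same paragraph, and it also re-proves Lemma \ref{4.2}(3). Finally, associativity of composition and the unit laws are inherited for free, since isofusions compose as ordinary group isomorphisms and the identity isofusion on $U_\mu$ is the identity map $\id_U$. Hence the objects (pointed groups on $A$) and morphisms (isofusions) form a category in which every morphism is invertible, i.e.\ a groupoid.

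There is essentially no obstacle here; the only thing demanding the slightest care is confirming that the composite witness $(st, t's')$ genuinely has its first component in $i A^{\phi\psi} j$ and second in $k A^{(\phi\psi)^{-1}} i$ — but this is exactly the content of the already-stated fact that witnesses compose, so I would simply cite it rather than recompute. If one preferred a self-contained argument, one could instead invoke condition (a) of Lemma \ref{4.1}: pick $r$ with $\phi(v) i = {}^r(vj)$ and $r_1$ with $\psi(w) j = {}^{r_1}(wk)$, then check that $rr_1$ realizes $(\phi\psi)(w) i = {}^{rr_1}(wk)$ after a short manipulation using that $j$ and $r_1$ interact appropriately with elements of $V$; but the witness-based route is cleaner and the paper has already laid the groundwork for it.
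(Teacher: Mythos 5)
Your proof is correct and follows one of the two routes the paper itself flags as sufficient — namely the witness-based argument, using the paragraph after Lemma \ref{4.1} which records that $(st, t's')$ witnesses $\phi\psi$ and $(s',s)$ witnesses $\phi^{-1}$. The only step requiring fresh verification is the identity case, and your check that $(i,i)$ is an $\id_U$-witness (using $A^{\id_U} = A^U$, $i \in A^U$, and $i = ii$) is correct.
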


We shall not be making in explicit study
of the groupoid in the latest lemma. For
that reason, we refrain from giving the
groupoid a symbol or name. We shall,
however, be making frequent use of the
lemma, without further signal.

There are two minor ways in which our
approach differs from that in Puig
\cite{Pui86}. The differences are of
shallow significance, but we comment
on them to avoid misunderstanding.
Puig first considered, more generally,
fusions between pointed groups on
$A$. That is, he considered composites
of inclusions and isofusions. He then
confined attention to the local pointed
groups, forming the ``local fusion
category'', \cite[2.15]{Pui86}.

As for the first difference, we never
have call, in this paper, to discuss
non-isomorphisms in the role of
fusions. Even when working with the
fusion system $\cF$ introduced in
Section \ref{1}, we shall be focusing
exclusively on the $\cF$-isomorphisms.
Treatment of other $\cF$-morphisms
would incur cost without gain.

As for the second, one reason for our
paying attention to the set of all points of
a $p$-subgroup, in the context of a given
$G$-algebra, is that such sets appear in
condition (b) of Theorem \ref{5.2} below.
Were we to omit (b) from the statement
of the theorem, our proof would still
crucially involve the consideration of
arbitrary points of $p$-subgroups.
The relevance of all those points
is not just a peculiarity of the proof.
Techniques for calculating relative
multiplicities lie outside the scope of
this paper but, as can readily be gleaned
from the formula for $m_A(U_\mu,
W_\omega)$ in Section \ref{1}, when
calculating relative multiplicities between
local pointed groups for concrete
examples, non-local points of
$p$-subgroups can sometimes usefully
be brought into consideration. We
mention, without proof, that when $A$
is the principal $2$-block algebra of the
symmetric group $S_5$, there exist local
pointed groups $U_\mu \leq W_\omega$
and $U < V < W$ such that all the points
$\nu$ of $V$ on $A$ satisfying
$U_\mu < V_\nu < W_\omega$
are non-local.

We shall be needing a lemma
describing how isofusions restrict
to pointed subgroups.

\begin{lem}
\label{4.4}
Let $U_\mu$ and $V_\nu$ be
pointed groups on $A$. Let $\phi :
V_\nu \rightarrow U_\mu$ be an
isofusion. Then the pointed subgroups
$S_\sigma$ of $U_\mu$ and the
pointed subgroups $T_\tau$ of
$V_\nu$ are in a bijective correspondence
such that $S_\sigma \leftrightarrow
T_\tau$ if and only if $\phi$ restricts
to an isofusion $T_\tau \rightarrow
S_\sigma$. Furthermore, when
$S_\sigma \leftrightarrow T_\tau$,
we have $m_A(S_\sigma, U_\mu) =
m_A(T_\tau, V_\nu)$ and $\phi$
restricts to an isomorphism
$N_V(T_\tau) \rightarrow N_U(S_\sigma)$.
\end{lem}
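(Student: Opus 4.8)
The plan is to build the correspondence $S_\sigma \leftrightarrow T_\tau$ directly from a $\phi$-witness and to transport all the required structure through it. Fix $i \in \mu$, $j \in \nu$, and a $\phi$-witness $(s,s') : j \mapsto i$, so $s \in iA^\phi j$, $s' \in jA^{\phi^{-1}}i$, $ss' = i$, $s's = j$. The key observation is that conjugation by the pair $(s,s')$ should give mutually inverse maps $iA^Si \to jA^Tj$ whenever $\phi$ restricts to an isomorphism $T \to S$ with $T \leq V$, $S \leq U$; more precisely, for $a \in iA^S i$ set $\Phi_T(a) = s' a s$, and for $c \in jA^Tj$ set $\Psi_S(c) = s c s'$. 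First I would check that $s' a s$ really lies in $jA^Tj$: idempotency of $j$ on both sides is immediate from $s's = j$, and the $T$-fixedness uses that $s \in A^\phi$, $s' \in A^{\phi^{-1}}$ together with $\phi(T) = S$ — this is the short routine computation with the defining relations $\phi(v)i = av$ rewritten for elements of $A^\phi$. Then $\Phi_T$ and $\Psi_S$ are mutually inverse unital algebra isomorphisms $iA^Si \cong jA^Tj$ (using $ss' = i$, $s's = j$ and that $a$, $c$ already absorb the relevant idempotents).

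Next I would use these algebra isomorphisms to set up the bijection on pointed subgroups. Recall that the points $\sigma$ of $S$ on $A$ with $S_\sigma \leq U_\mu$ correspond bijectively to the primitive idempotents-up-to-conjugacy of $iA^Si$ that are "seen" inside $i$; concretely, $S_\sigma \leq U_\mu$ means some element of $\sigma$ appears in a primitive decomposition of $i$ in $A^S$, i.e. $\sigma$ meets the local algebra $iA^Si$ nontrivially. Since $\Phi_T : iA^Si \to jA^Tj$ is an algebra isomorphism, it carries conjugacy classes of primitive idempotents to conjugacy classes of primitive idempotents, hence induces a bijection between $\{\sigma : S_\sigma \leq U_\mu\}$ and $\{\tau : T_\tau \leq V_\nu\}$. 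Running over all $S \leq U$ (equivalently all $T = \phi^{-1}(S) \leq V$), these assemble into a bijection between all pointed subgroups of $U_\mu$ and all pointed subgroups of $V_\nu$. I would then verify the characterizing property: $\phi$ restricts to an isofusion $T_\tau \to S_\sigma$ iff $\sigma = \Phi_T^{-1}$-image of $\tau$. One direction is essentially the construction; for the converse, given a restricted isofusion one gets a witness $(s_0, s_0')$ for $\phi|_T$, and uniqueness (Lemma~\ref{4.2}(1)) forces the point to be the one already produced — alternatively, one checks directly that $(s\,k, k\,s')$ is a $\phi$-witness $k \mapsto \Psi_S(k)$ for any primitive $k \in \tau \cap jA^Tj$.

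Finally, the two "furthermore" clauses. For multiplicities: $m_A(S_\sigma, U_\mu)$ is the number of elements of $\sigma$ in a primitive decomposition of $i$ in $A^S$, which equals the number of elements of $\sigma \cap iA^Si$ in a primitive decomposition of $i = 1_{iA^Si}$ in the algebra $iA^Si$; applying the algebra isomorphism $\Phi_T$ this equals the corresponding count in $jA^Tj$, namely $m_A(T_\tau, V_\nu)$. For the normalizer statement: $N_U(S_\sigma)$ is the set of $g \in N_U(S)$ with ${}^g\sigma = \sigma$, and unwinding via $\Phi_T$ one sees that $\phi$ conjugates this onto $N_V(T_\tau)$ — here one uses that $\phi$ already maps $N_V(T) \to N_U(S)$ as groups (since $\phi(T) = S$ and $\phi$ extends to the relevant overgroup, or more carefully, one intersects with the domains involved), and that the point-stabilizing condition transports correctly because $\Phi_T$ is equivariant for the conjugation actions up to the twist by $\phi$. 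The main obstacle I expect is bookkeeping the equivariance in this last step — making precise the sense in which conjugation by $g \in N_U(S)$ on $iA^Si$ corresponds under $\Phi_T$ to conjugation by $\phi^{-1}(g)$ on $jA^Tj$ — since $s$ and $s'$ are not themselves $N_U(S)$- or $N_V(T)$-fixed; one resolves this by absorbing the discrepancy $\sigma_A(g)s\sigma_A(\phi^{-1}(g))^{-1}$ into a modified witness and invoking uniqueness of points under isofusion again.
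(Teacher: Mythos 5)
Your proposal is correct and follows essentially the same route as the paper: the paper also fixes a $\phi$-witness $(c,d)$ and sends primitive idempotents $k\in\tau\cap jA^Tj$ to $ckd\in\sigma\cap iA^Si$, which is exactly your map $\Psi_S$, and it uses the rider of Lemma~\ref{4.1} for well-definedness (the paper gets the equality of multiplicities by a symmetry argument, replacing $\phi$ with $\phi^{-1}$, rather than by exhibiting the inverse isomorphism $\Phi_T$, but this is a cosmetic difference). The ``main obstacle'' you flag in the normalizer step is in fact a non-issue: since $s\in A^\phi$ one has $\phi(g)\,s\,g^{-1}=s$ for every $g\in V$, and likewise $g\,s'\,\phi(g)^{-1}=s'$, so $s$ and $s'$ are exactly $\Delta(\phi)$-fixed and the equivariance ${}^{\phi(g)}\Psi_S(k)=\Psi_S({}^{g}k)$ holds on the nose with no discrepancy to absorb --- this is precisely the paper's one-line computation ${}^{\phi(g)}(ckd)=c\,({}^{g}k)\,d$.
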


\begin{proof}
Fix a pointed subgroup $T_\tau \leq
V_\nu$. Define $S = \phi(T)$. We shall
show that there exists a point $\sigma$
of $S$ on $A$ such that $m_A(S_\sigma,
U_\mu) \geq m_A(T_\tau, V_\nu)$ and
$\phi$ restricts to an isofusion $T_\tau
\rightarrow S_\sigma$ and to a
monomorphism $N_V(T_\tau) \rightarrow
N_U(S_\sigma)$. That will suffice,
because the inequality will imply that
$S_\sigma \leq U_\mu$, whereupon
replacement of $\phi$ by $\phi^{-1}$
and an appeal to Lemma \ref{4.2} will
yield the non-strictness of the inequality,
the bijectivity of the injection $T_\tau
\mapsto S_\sigma$ and the bijectivity
of the monomorphism $N_V(T_\tau)
\rightarrow N_U(S_\sigma)$.

Let $i \in \mu$ and $j \in \nu$. Let
$(c, d)$ be a $\phi$-witness
$j \mapsto i$. Choose $k \in \tau$
such that $k \leq j$. It is easily checked
that $c k d$ is a primitive idempotent of
$A^S$ and $(ck, kd)$ is a $\phi$-witness
$k \mapsto ckd$. Moreover,
$ckd \leq i$. So, letting $\sigma$ be
the point of $S$ on $A$ owning $ckd$,
then $S_\sigma \leq U_\mu$. By the
rider of Lemma \ref{4.1}, $\sigma$ is
independent of the choice of $k$. So,
given any set $\{ k_1, ..., k_m \}$ of
mutually orthogonal elements of
$\tau \cap j A^T j$, then $\{ c k_1 d,
... , c k_m d \}$ is a set of mutually
orthogonal elements of $\sigma \cap
i A^S i$. Taking $m$ to be as large as
possible, we deduce the required
inequality of multiplicities.

It remains only to show that, given
$g \in N_V(T_\tau)$, then $\phi(g)
\in N_U(S_\sigma)$. Plainly, $\phi(g)
\in N_U(S)$. So ${}^{\phi(g)} (ckd)$ is
a primitive idempotent of $A^S$. But
${}^{\phi(g)} (ckd) = c . {}^g k. d$, so
$(c . {}^g k, {}^g k . d)$ is a
$\phi$-witness ${}^g k \mapsto
{}^{\phi(g)} (ckd)$. But
${}^g k \in \tau$. So, by the rider of
Lemma \ref{4.1} again, ${}^{\phi(g)}
(ckd) \in \sigma$. Therefore,
$\phi(g) \in N_U(S_\sigma)$, as
required.
\end{proof}

Notwithstanding our comments above
on the importance of non-local points
of $p$-subgroups, the rest of this
section is concerned with a
characterization of isofusions that
pertains only to local points.
We shall be making use of Brauer maps,
as defined in \cite[5.4.2, 5.4.10]{Lin18}
for $\cO G$-modules and, in particular,
for $G$-algebras. For a $p$-subgroup
$P \leq G$ and $\cO G$-module $M$,
we write the $P$-relative Brauer map
on $M$ as $\br_P : M^P \rightarrow M(P)$.
When an element of the Brauer quotient
$M(P)$ is written in the form $\oo{x}$,
it is to be understood that $\oo{x} =
\br_P(x)$ and $x \in M^P$. We shall
sometimes employ the overbar just as a
reminder that $\oo{x}$ is an element of
a Brauer quotient, and we shall not always
need to consider the choice of lift $x$.

Let $P$ and $Q$ be $p$-subgroups of
$G$ and let $\phi$ be a group isomorphism
$Q \rightarrow P$. Viewing $A$ as an
$\cO(G {\times} G)$-module, we write the
$\Delta(\phi)$-relative Brauer map on $A$
as $\br_\phi : A^\phi \rightarrow A(\phi)$.
Let $R$ be another $p$-subgroup of $G$
and let $\psi : R \rightarrow Q$ be an
isomorphism. The multiplication operation
$A \times A \rightarrow A$ restricts to an
$\cO$-bilinear map $A^\phi \times A^\psi
\rightarrow A^{\phi \psi}$ which
induces an $\FF$-bilinear map
$A(\phi) \times A(\psi) \rightarrow
A(\phi \psi)$, written $(\oo{u}, \oo{v})
\mapsto \oo{u} * \oo{v}$, where
we define $\oo{u} * \oo{v} = \oo{uv}$.
To see that $*$ is well-defined, observe
that, by an application of the Frobenius
relations, $uv \in \ker(\br_{\phi \psi})$
whenever $u \in \ker(\br_\phi)$ or
$v \in \ker(\br_\psi)$. We call $*$ a
{\bf localized multiplication}. Some
similar bilinear maps appear in
Puig--Zhou \cite[3.2]{PZ07}.

The localized multiplications, taken
together, inherit an associativity property,
to wit, given yet another $p$-subgroup
$S$ of $G$ and a group isomorphism
$\chi : S \rightarrow R$, then the expression
$\oo{u} * \oo{v} * \oo{w}$ is unambiguous
for any $\oo{u} \in A(\phi)$, $\oo{v} \in
A(\psi)$, $\oo{w} \in A(\chi)$, in fact,
$\oo{u} * \oo{v} * \oo{w} = \oo{uvw}$.
Also note that, putting $\phi = \id_P$, the
localized multiplication $A(P) {\times} A(P)
\rightarrow A(P)$ is the usual multiplication
on the Brauer quotient $A(P)$.

\begin{lem}
\label{4.5}
Let $P_\gamma$ and $Q_\delta$ be
local pointed groups on $A$. Let
$i \in \gamma$ and $j \in \delta$. Let
$\phi : Q \rightarrow P$ be a group
isomorphism. Then $\phi$ is an isofusion
$Q_\delta \rightarrow P_\gamma$ if
and only if there exist $\oo{s} \in
\oo{i} * A(\phi) * \oo{j}$ and $\oo{t} \in
\oo{j} * A(\phi^{-1}) * \oo{i}$ such that
$\oo{i} = \oo{s} * \oo{t}$ and
$\oo{j} = \oo{t} * \oo{s}$.
\end{lem}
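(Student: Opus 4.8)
The plan is to transfer the characterization of isofusions via witnesses, given by Lemma \ref{4.1}(b), through the Brauer maps, exploiting that $P_\gamma$ and $Q_\delta$ are local. First I would establish the ``only if'' direction. Suppose $\phi$ is an isofusion $Q_\delta \rightarrow P_\gamma$, and let $(s,s')$ be a $\phi$-witness $j \mapsto i$, so $s \in i A^\phi j$, $s' \in j A^{\phi^{-1}} i$, with $i = ss'$ and $j = s's$. Since $P_\gamma$ is local, $\oo{i} = \br_P(i) \neq 0$ in $A(P)$, and likewise $\oo{j} \neq 0$. Applying the localized multiplications, set $\oo{s} = \br_\phi(s) \in \oo{i} * A(\phi) * \oo{j}$ and $\oo{t} = \br_{\phi^{-1}}(s') \in \oo{j} * A(\phi^{-1}) * \oo{i}$ (using $s = isj$ etc.\ to land in the correct localized products). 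The associativity of $*$ together with the rule $\oo{u} * \oo{v} = \oo{uv}$ gives $\oo{s} * \oo{t} = \oo{ss'} = \oo{i}$ and $\oo{t} * \oo{s} = \oo{s's} = \oo{j}$, as required.

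The ``if'' direction is the substantive one. Assume we are given $\oo{s} \in \oo{i} * A(\phi) * \oo{j}$ and $\oo{t} \in \oo{j} * A(\phi^{-1}) * \oo{i}$ with $\oo{i} = \oo{s} * \oo{t}$ and $\oo{j} = \oo{t} * \oo{s}$. Lift to $s \in i A^\phi j$ and $s' \in j A^{\phi^{-1}} i$ (choosing the lifts to already lie in the appropriate two-sided ``corner'' by multiplying by $i$ and $j$ as needed). Then $\br_P(i - ss') = \oo{i} - \oo{s}*\oo{t} = 0$, so $i - ss' \in \ker(\br_P) \cap i A^P i$, and similarly $j - s's \in \ker(\br_Q) \cap j A^Q j$. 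The key point is that $\ker(\br_P)$, restricted to the local algebra $i A^P i$, is contained in its Jacobson radical: because $P_\gamma$ is local, $i A^P i$ is a local ring whose unique simple quotient factors through $\br_P$ (equivalently, $\br_P(i A^P i)$ is the simple $\FF$-algebra $\FF \cong i A^P i / J(i A^P i)$), so $i - ss' \in J(i A^P i)$, whence $ss'$ is a unit in $i A^P i$ — in particular $ss'$ is invertible in the corner ring $iAi$ with inverse some $x = ixi$. Symmetrically $s's$ is invertible in $jAj$.

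With $ss'$ and $s's$ invertible in the respective corner rings, a standard argument shows $i$ and $j$ are conjugate via a unit of $A$ and produces genuine witnesses: replacing $s$ by $(ss')^{-1} s$ (computed in $iAi$) we may arrange $ss' = i$; then $s' s$ is an idempotent of $jAj$ equal to $j$ modulo its radical, hence $s's = j$ after a further adjustment, or more directly one checks $s'' := s' (s s')$-corrections give a two-sided inverse. Concretely, set $\hat s = (ss')^{-1} s \in i A^\phi j$ and $\hat s' = s' \in j A^{\phi^{-1}} i$; then $\hat s \hat s' = i$, and $\hat s' \hat s$ is an idempotent in $jAj$ with $\br_Q(\hat s'\hat s) = \oo j$, so $\hat s'\hat s = j$ by primitivity of $j$ in $A^Q$ together with $\hat s'\hat s \leq j$. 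This exhibits $(\hat s, \hat s')$ as a $\phi$-witness $j \mapsto i$ in the sense of Lemma \ref{4.1}(b), so $\phi$ is an isofusion $Q_\delta \rightarrow P_\gamma$. I expect the main obstacle to be the careful bookkeeping in the ``if'' direction: ensuring the lifts $s, s'$ sit in the correct corners, and rigorously justifying that $\ker(\br_P) \cap i A^P i \subseteq J(i A^P i)$ using locality of $P_\gamma$ — this last inclusion is exactly where the hypothesis that $\gamma$ and $\delta$ are \emph{local} points is used, and without it the argument collapses.
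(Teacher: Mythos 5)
Your proof is correct and follows essentially the same route as the paper: lift to the corner rings, observe that $ss'$ agrees with $i$ modulo $\ker(\br_P)\cap iA^Pi\subseteq J(iA^Pi)$, invert in the local ring, and recognize the resulting idempotent in $jA^Qj$ as $j$. The only cosmetic difference is the final step, where the paper concludes $yx=j$ by noting it is a nonzero idempotent in the local ring $jA^Qj$, whereas you invoke primitivity of $j$ together with a Brauer-map computation; these are interchangeable formulations of the same fact.
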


\begin{proof}
One direction is clear. Conversely, suppose
such $\oo{s}$ and $\oo{t}$ exist. Let
$b = isj$ and $y = jti$, which belong to
$i A^\phi j$ and $j A^{\phi^{-1}} i$,
respectively. Then $\oo{by} = \oo{i}$,
hence
$$i - by \in i A^P i \cap \ker(\br_P)
  \subseteq J(i A^P i) \; .$$
But $i A^P i$ is a local ring, so $by$ has
an inverse $a$ in $i A^P i$. Putting
$x = ab$, then $x \in i A^\phi j$ and
$i = xy$. We have $(yx)^2 = yix = yx$,
which belongs to the local ring $j A^Q j$.
So $j = yx$. We have shown that $(x, y)$
is a $\phi$-witness $j \mapsto i$.
\end{proof}

\section{Uniformity}
\label{5}

For any interior $G$-algebra $A$, we
shall introduce a category $\cF^{\rm uni}(A)$
whose objects are the $p$-subgroups
of $G$ and whose morphisms are some
group monomorphisms. In subsequent
sections, strong further hypotheses will
be imposed. Our motive for the present
generality is not any anticipated breadth
of application. The removal of irrelevant
conditions is simply for the sake of clarity.

\begin{thm}
\label{5.1}
Let $A$ be an interior $G$-algebra. Let
$P$ and $Q$ be $p$-subgroups of $G$
and let $\phi : Q \rightarrow P$ be a
group isomorphism. Then the following
three conditions are equivalent:

\noin {\bf (a)} The localized multiplications
$* : A(\phi) \times A(\phi^{-1})
\rightarrow A(P)$ and $* : A(\phi^{-1}) \times
A(\phi) \rightarrow A(Q)$ are surjective.

\noin {\bf (b)} There exist $\oo{u} \in
A(\phi)$ and $\oo{v} \in A(\phi^{-1})$
such that $1_{A(P)} = \oo{u} * \oo{v}$
and $1_{A(Q)} = \oo{v} * \oo{u}$.

\noin {\bf (c)} There is a bijective
correspondence between the local
points $\gamma$ of $P$ on $A$ and
the local points $\delta$ of $Q$ on
$A$ whereby $\gamma \leftrightarrow
\delta$ provided $\phi$ is an isofusion
$Q_\delta \rightarrow P_\gamma$.
Furthermore, $m_A(P_\gamma) =
m_A(Q_\delta)$ when $\gamma
\leftrightarrow \delta$.
\end{thm}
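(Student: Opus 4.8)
The plan is to encode all three conditions as properties of a single auxiliary $\FF$-algebra assembled from the localized multiplications. Let $C$ be the $\FF$-algebra with underlying space $A(P)\oplus A(\phi)\oplus A(\phi^{-1})\oplus A(Q)$, whose elements are multiplied by treating them as $2\times 2$ matrices with $(1,1)$-, $(1,2)$-, $(2,1)$- and $(2,2)$-entries in $A(P)$, $A(\phi)$, $A(\phi^{-1})$ and $A(Q)$, using the localized multiplications $*$ of Section \ref{4} entrywise. Their associativity (recorded there) makes $C$ associative, and since $1_A$ is the unity of $A$, the Brauer image $\oo{1_A}$ is a two-sided identity for each of $A(P)*A(\phi)$, $A(\phi)*A(Q)$, $A(\phi^{-1})*A(P)$, $A(Q)*A(\phi^{-1})$; hence $e+f$ is the unity of $C$, where $e=\diag(\oo{1_A},0)$ and $f=\diag(0,\oo{1_A})$ are orthogonal idempotents with $eCe=A(P)$, $fCf=A(Q)$, $eCf=A(\phi)$ and $fCe=A(\phi^{-1})$. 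The Brauer maps identify the local points of $P$ and of $Q$ on $A$ with the points of the $\FF$-algebras $A(P)$ and $A(Q)$, sending $m_A(P_\gamma)$ and $m_A(Q_\delta)$ to the corresponding multiplicities; and by Lemma \ref{4.5}, for primitive idempotents $\oo{i}$ of $A(P)$ and $\oo{j}$ of $A(Q)$ in local points $\gamma$ and $\delta$, the isomorphism $\phi$ is an isofusion $Q_\delta\rightarrow P_\gamma$ if and only if $\oo{i}$ and $\oo{j}$ are isomorphic as idempotents of $C$.

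In this language (a) says exactly that $e$ and $f$ are full idempotents of $C$, that is $CeC=C=CfC$ (equivalent to the two surjectivity statements since $e+f=1$), and (b) says exactly that $e\cong f$ as idempotents of $C$. Thus (b)$\Rightarrow$(a) is immediate: from $\oo{u}\in eCf$, $\oo{v}\in fCe$ with $\oo{u}\,\oo{v}=e$ and $\oo{v}\,\oo{u}=f$, every $\oo{x}\in A(P)$ is $(\oo{x}\,\oo{u})*\oo{v}$ with $\oo{x}\,\oo{u}\in A(\phi)$, and symmetrically for the other multiplication. For (b)$\Leftrightarrow$(c) one uses that $C$, being finite-dimensional over $\FF$, is semiperfect. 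Given (b): since $\oo{v},\oo{u}$ also witness $1-e=f\cong e=1-f$, the idempotents $e$ and $f$ are conjugate by a unit $w\in C^\times$ with $f=wew^{-1}$, and conjugation by $w$ is an $\FF$-algebra isomorphism $A(P)\rightarrow A(Q)$ carrying primitive idempotent decompositions of $\oo{1_A}$ to such decompositions, hence inducing a multiplicity-preserving bijection of local points; a short check with $\oo{s}=\oo{i}\,w^{-1}$ and $\oo{t}=w\,\oo{i}$ shows, via Lemma \ref{4.5}, that matched points are linked by an isofusion, so Lemma \ref{4.2} identifies this bijection as the isofusion correspondence, proving (c). Conversely, under (c), choose primitive idempotent decompositions $\oo{1_A}=\sum_x\oo{i}_x$ in $A(P)$ and $\oo{1_A}=\sum_x\oo{j}_x$ in $A(Q)$ (obtained from decompositions of $1_A$ in $A^P$ and $A^Q$ via the Brauer maps, so Lemma \ref{4.5} applies) indexed compatibly --- using the bijection of points and the equality of multiplicities --- so that $\oo{i}_x$ and $\oo{j}_x$ always lie in matched local points; Lemma \ref{4.5} supplies $\oo{s}_x\in\oo{i}_x*A(\phi)*\oo{j}_x$ and $\oo{t}_x\in\oo{j}_x*A(\phi^{-1})*\oo{i}_x$ with $\oo{s}_x\oo{t}_x=\oo{i}_x$ and $\oo{t}_x\oo{s}_x=\oo{j}_x$, and putting $\oo{u}=\sum_x\oo{s}_x$, $\oo{v}=\sum_x\oo{t}_x$, all cross terms vanish by orthogonality of the $\oo{i}_x$ and of the $\oo{j}_x$ (as $\oo{s}_x=\oo{s}_x\oo{j}_x$ and $\oo{t}_y=\oo{j}_y\oo{t}_y$), giving $\oo{u}\,\oo{v}=e$ and $\oo{v}\,\oo{u}=f$, that is, (b).

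It remains to prove (a)$\Rightarrow$(c); this contains the step I expect to be the main obstacle. First, the bijection of local points: given a primitive idempotent $\oo{i}$ of $A(P)$, write $\oo{i}=\oo{i}*\oo{i}*\oo{i}=\sum_t(\oo{i}\,\oo{p}_t)(\oo{q}_t\,\oo{i})$ using surjectivity of $*\colon A(\phi)\times A(\phi^{-1})\rightarrow A(P)$; since $\oo{i}A(P)\oo{i}$ is local, some summand $w:=(\oo{i}\,\oo{p}_{t_0})(\oo{q}_{t_0}\,\oo{i})$ is a unit of $\oo{i}A(P)\oo{i}$, and then $\oo{a}:=\oo{i}\,\oo{p}_{t_0}$, $\oo{b}:=\oo{q}_{t_0}\,\oo{i}\,w^{-1}$ satisfy $\oo{a}\,\oo{b}=\oo{i}$, whence $\oo{j}:=\oo{b}\,\oo{a}$ is an idempotent with $\oo{j}A(Q)\oo{j}\cong\oo{i}A(P)\oo{i}$ (via $y\mapsto\oo{a}y\oo{b}$), so $\oo{j}$ is primitive, and $(\oo{a},\oo{b})$ witnesses, via Lemma \ref{4.5}, that $\phi$ is an isofusion between the local points of $\oo{j}$ and $\oo{i}$. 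Running the symmetric construction from surjectivity of the other localized multiplication and invoking Lemma \ref{4.2} then shows the isofusion relation restricts to a bijection between the local points of $P$ and those of $Q$. The deeper point is the equality $m_A(P_\gamma)=m_A(Q_\delta)$ for matched $\gamma,\delta$: since $\FF$ is algebraically closed, $m_A(P_\gamma)=\dim_\FF S_\gamma$ and $m_A(Q_\delta)=\dim_\FF S_\delta$ for the associated simple modules, and (a) --- being the assertion that the Morita context $C$ is surjective on both sides --- makes $A(\phi)$, $A(\phi^{-1})$ mutually inverse Morita bimodules identifying $S_\gamma$ with $S_\delta$; so what must be shown is that this Morita equivalence preserves $\FF$-dimensions of simple modules. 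That is where properties of the Brauer quotients of the interior $G$-algebra $A$ must be brought to bear, beyond the purely formal Morita-context machinery, since a generic surjective Morita context need not preserve simple dimensions; once this is established, (c) follows, and together with (b)$\Leftrightarrow$(c) and (b)$\Rightarrow$(a) this yields the equivalence of (a), (b) and (c).
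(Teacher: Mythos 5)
Your matrix-ring $C$ packages the localized multiplications cleanly, and your arguments for $(b)\Rightarrow(a)$ and $(b)\Leftrightarrow(c)$ are correct and essentially equivalent to the paper's; the auxiliary ring makes the semiperfect manipulations transparent. But the proof has the gap you yourself flag: you do not establish $m_A(P_\gamma)=m_A(Q_\delta)$ from (a), so the cycle of implications is not closed.

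That gap is not, as you speculate, a place where properties of the Brauer quotients of the interior $G$-algebra must be brought to bear beyond formal Morita-context machinery. The paper closes it by proving $(a)\Rightarrow(b)$ directly by a purely formal dimension count, after which your $(b)\Rightarrow(c)$ finishes. The key you have dropped is that (a) asserts surjectivity of $*$ as a \emph{set map}, not merely that the image spans: there is a \emph{single} pair $\oo{u}\in A(\phi)$, $\oo{v}\in A(\phi^{-1})$ with $1_{A(P)}=\oo{u}*\oo{v}$. Then every $\oo{a}\in A(P)$ equals $(\oo{a}*\oo{u})*\oo{v}$, so $\dash*\oo{v}\colon A(\phi)\to A(P)$ is a surjective $\FF$-linear map and $\dim_\FF A(P)\le\dim_\FF A(\phi)$; similarly $\oo{u}*\dash\colon A(Q)\to A(\phi)$ is onto, so $\dim_\FF A(\phi)\le\dim_\FF A(Q)$. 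Applying the same to $\phi^{-1}$ reverses both inequalities, so the four dimensions coincide. Since $1_{A(P)}=\oo{u}*\oo{v}*\oo{u}*\oo{v}$, the fourfold composite of left multiplications on $A(\phi)$ is the identity; by the dimension equalities each factor is an isomorphism, in particular $\oo{v}*\oo{u}*\dash\colon A(Q)\to A(Q)$, and since $\oo{v}*\oo{u}$ is an idempotent of $A(Q)$ this forces $\oo{v}*\oo{u}=1_{A(Q)}$, which is (b). Notice that you silently relax (a) to span-surjectivity twice, when identifying (a) with fullness of $e,f\in C$ and again when writing $\oo{i}=\sum_t(\oo{i}\,\oo{p}_t)(\oo{q}_t\,\oo{i})$; under that weaker reading your objection about Morita contexts not preserving simple dimensions would indeed be fatal, so that reading cannot be what the theorem intends. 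Finally, the paper first disposes of the degenerate case $A(P)=0$ or $A(Q)=0$ (under (a), one vanishing forces $A(\phi)=A(\phi^{-1})=0$ and hence the other, and all three conditions hold vacuously); your appeal to the localness of $\oo{i}A(P)\oo{i}$ presupposes nonvanishing, so this case needs to be noted.
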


\begin{proof}
We first deal with a degenerate case.
Suppose $A(P) = 0$ or $A(Q) = 0$.
Given $u \in A^\phi$ then $\br_\phi(u) =
\br_P(1) * \br_\phi(u) * \br_Q(1)$. So
$A(\phi) = 0$ and $A(\phi^{-1}) = 0$. It
is now easy to deduce that, if at least
one of (a), (b), (c) holds, then both
$A(P)$ and $A(Q)$ vanish, hence all
three of (a), (b), (c) hold. So we may
suppose that $A(P)$ and $A(Q)$ are
non-zero.

Assuming (a), we shall deduce (b). By
the surjectivity of one of the local
multiplication operations,
$1_{A(P)} = \oo{u} * \oo{v}$ for some
$\oo{u} \in A(\phi)$ and $\oo{v} \in
A(\phi^{-1})$. We shall show that
$1_{A(Q)} = \oo{v} * \oo{u}$ for any
such $\oo{u}$ and $\oo{v}$. Given
$\oo{a} \in A(P)$, then $\oo{a} =
\oo{a} * \oo{u} * \oo{v}$ and
$\oo{a} * \oo{u} \in A(\phi)$. So the
$\FF$-linear map $\dash * \oo{v} :
A(\phi) \rightarrow A(P)$ is surjective.
Similarly, $\oo{u} * \dash : A(Q)
\rightarrow A(\phi)$ is surjective.
Therefore,
$$\dim_\FF(A(P)) \leq
  \dim_\FF(A(\phi)) \leq
  \dim_\FF(A(Q)) \; .$$
Replacing $\phi$ with $\phi^{-1}$
yields two further inequalities, hence
$$\dim_\FF(A(P)) = \dim_\FF(A(\phi))
  = \dim_\FF(A(Q))
  = \dim_\FF(A(\phi^{-1})) \; .$$
Since $1_{A(P)} = \oo{u} * \oo{v} * \oo{u}
* \oo{v}$, the composite $\FF$-map
$$(\oo{u} * \dash) \comp
  (\oo{v} * \dash) \comp (\oo{u} * \dash)
  \comp (\oo{v} * \dash) \: : \: A(\phi)
  \leftarrow A(Q) \leftarrow A(\phi)
  \leftarrow A(Q) \leftarrow A(\phi)$$
is the identity on $A(\phi)$. By
considering dimensions, the composite
$$\oo{v} * \oo{u} * \dash =
  (\oo{v} * \dash) \comp (\oo{u} * \dash)
  \: : \: A(Q) \leftarrow A(\phi)
  \leftarrow A(Q)$$
must be an $\FF$-isomorphism. But
$\oo{v} * \oo{u}$ is easily seen to be
an idempotent of $A(Q)$. Therefore,
$1_{A(Q)} = \oo{v} * \oo{u}$. We
have deduced (b).

Now assuming (b), we shall deduce (c).
Consider a primitive idempotent
decomposition $1 =
\sum_{j \in J} j$ in $A^Q$. For each
$j \in J$, write $\oo{j} = \br_Q(j)$.
Let $J_0 = \{ j \in J : \oo{j} \neq 0 \}$.
For each $j \in J_0$, the element
$\oo{u} * \oo{j} * \oo{v} \in A(P)$ is a
primitive idempotent, which we lift to
a primitive idempotent $i_j \in A^P$.
We have $1 = \sum_{j \in J_0} \oo{i_j}$
and $1 = \sum_{j \in J_0} \oo{j}$ as
primitive idempotent decompositions
in $A(P)$ and $A(Q)$, respectively.

We claim that, for every local point
$\delta$ of $Q$ on $A$, there exists a
local point $\gamma$ of $P$ on $A$
such that $\phi : Q_\delta \rightarrow
P_\gamma$. Plainly, $\delta$ intersects
with $J_0$. Choose $j \in \delta \cap J_0$,
write $i = i_j$ and let $\gamma$ be the
local point of $P$ owning $i$. Defining
$s = i u j$ and $t = j v i$, then $\oo{s}$
and $\oo{t}$ satisfy the hypothesis of
Lemma \ref{4.5}. The claim is now
established.

Replacing $\phi$ with $\phi^{-1}$ and
applying Lemma \ref{4.2}, we deduce
that the condition $\phi : Q_\delta
\rightarrow P_\gamma$ characterizes a
bijective correspondence between the
local points $\gamma$ of $P$ on $A$
and the local points $\delta$ of $Q$ on
$A$. Suppose $\gamma \leftrightarrow
\delta$. We complete the deduction of
(c) by observing that
$$m_A(P_\gamma) = | \{ j \in J_0 :
  \oo{i_j} \in \br_P(\gamma) \} | =
  | \{ j \in J_0 : \oo{j} \in \br_Q(\delta)
  \} | = m_A(Q_\delta) \; .$$

Assuming (c), we shall deduce (b). Let
$1 = \sum_{i \in I} i$ and
$j = \sum_{j \in J} j$ be primitive
idempotent decompositions in $A^P$ and
$A^Q$, respectively. Define $J_0$ as
before and $I_0$ similarly. By the
assumption, there is a bijection $J_0 \ni
j \mapsto i_j \in I_0$ such that, for each
$j \in J_0$, there exists a $\phi$-witness
$(u_j, v_j) : j \mapsto i_j$. Let
$u = \sum_{j \in J_0} u_j$ and
$v = \sum_{j \in J_0} v_j$. Then
$uv = \sum_{j \in J_0} i_j$ and
$vu = \sum_{j \in J_0} j$. So
$$\oo{u} * \oo{v} = \sum_{j \in J_0} \oo{i_j}
  = \sum_{i \in I_0} \oo{i} = 1_{A(P)}$$
and $\oo{v} * \oo{u} = 1_{A(Q)}$. We have
deduced (b).

Finally, assuming (b), then any
$\oo{t} \in A(P)$ satisfies $\oo{t} =
\oo{t} * 1_{A(P)} = \oo{tu} * \oo{v}$, so
the localized multiplication $A(\phi)
\times A(\phi^{-1}) \rightarrow A(P)$
is surjective, likewise the localized
multiplication $A(\phi^{-1}) \times
A(\phi) \rightarrow A(Q)$. We have
deduced (a).
\end{proof}

When the three equivalent conditions
in Theorem \ref{5.1} hold, we call
$\phi$ {\bf locally $A$-uniform}.

\begin{thm}
\label{5.2}
Let $A$ and $\phi : Q \rightarrow P$ be as
in the previous theorem. Then the following
three conditions are equivalent:

\noin {\bf (a)} Every isomorphism restricted
from $\phi$ is locally $A$-uniform.

\noin {\bf (b)} There is a bijective
correspondence between the
points $\alpha$ of $P$ on $A$ and
the points $\beta$ of $Q$ on
$A$ whereby $\alpha \leftrightarrow
\beta$ provided $\phi$ is an isofusion
$Q_\beta \rightarrow P_\alpha$.
Furthermore, $m_A(P_\alpha) =
m_A(Q_\beta)$ when $\alpha
\leftrightarrow \beta$.

\noin {\bf (c)} We have $A^\times
\cap A^\phi \neq \emptyset$.
\end{thm}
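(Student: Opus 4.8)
The plan is to prove the cycle of implications (c) $\Rightarrow$ (a), (a) $\Rightarrow$ (b), (b) $\Rightarrow$ (c), leaning on Theorem \ref{5.1} for the local analysis and on Lemma \ref{4.4} to pass between a pointed group and its pointed subgroups. First, for (c) $\Rightarrow$ (a): suppose $r \in A^\times \cap A^\phi$, so that $\phi(v) r = r v$ for all $v \in Q$. Then conjugation by $r$ implements a group isomorphism that I expect to carry, for any $T \leq Q$ and any point $\tau$ of $T$ on $A$, the point $\tau$ to a point $\sigma$ of $\phi(T)$ on $A$ making $\phi|_T$ an isofusion $T_\tau \to \phi(T)_\sigma$; concretely, $\sigma$ owns ${}^r k$ for $k \in \tau$, and one checks $({}^r k \cdot r \cdot k, k \cdot r^{-1} \cdot {}^r k)$ or a similar pair is a $\phi|_T$-witness. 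Applying this with a $\phi|_T$-conjugate pair of idempotents shows every isomorphism restricted from $\phi$ satisfies Theorem \ref{5.1}(c) (indeed even the stronger global-point version), hence is locally $A$-uniform; this gives (a).

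For (a) $\Rightarrow$ (b): I would build the bijection between all points of $P$ and all points of $Q$ by refining through a chain and using that every restriction of $\phi$ is locally $A$-uniform. The idea is to take a primitive idempotent decomposition $1 = \sum_{j} j$ in $A^Q$; the local/non-local dichotomy is handled by Brauer maps. For a point $\beta$ of $Q$ that is local, Theorem \ref{5.1}(c) applied to $\phi$ itself produces the matching local point $\alpha$ of $P$ with $m_A(P_\alpha) = m_A(Q_\beta)$. For a non-local point $\beta$ of $Q$ on $A$, a defect pointed subgroup $T_\tau$ of $Q_\beta$ has $T < Q$; by Theorem \ref{3.1} this $T_\tau$ determines $\beta$ uniquely among points having it as a defect, and by Remark \ref{3.2} the relative multiplicity $m_A(T_\tau, Q_\beta) = |N_Q(T_\tau) : T|$. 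Since $\phi|_T$ is locally $A$-uniform, it is an isofusion $T_\tau \to \phi(T)_{\sigma}$ for a unique local $\sigma$, and by Lemma \ref{4.4} the normalizer $N_Q(T_\tau)$ maps isomorphically onto $N_P(\phi(T)_\sigma)$, so $\phi(T)_\sigma$ is a defect pointed subgroup of some point $\alpha$ of $P$ with the same relative multiplicity $|N_P(\phi(T)_\sigma):\phi(T)|$. The multiplicity formula $m_A(Q_\beta) = \sum_\nu m_A(Q_\beta, Q_\nu)\, m_A(Q_\nu)$ from Section \ref{1}, together with the local case already done and the $N$-index bookkeeping, forces $m_A(P_\alpha) = m_A(Q_\beta)$; and Lemma \ref{4.4} (applied at the defect level) shows $\phi$ is an isofusion $Q_\beta \to P_\alpha$. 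Running the same argument with $\phi^{-1}$ and invoking Lemma \ref{4.2}(1),(2) shows the correspondence is a genuine bijection characterized by the isofusion condition. That yields (b).

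For (b) $\Rightarrow$ (c): given the bijection $\alpha \leftrightarrow \beta$ of (b), write $1 = \sum_{\alpha}\sum_{i \in I_\alpha} i$ in $A^P$ and $1 = \sum_{\beta}\sum_{j \in J_\beta} j$ in $A^Q$ as primitive idempotent decompositions, where $|I_\alpha| = m_A(P_\alpha) = m_A(Q_\beta) = |J_\beta|$ when $\alpha \leftrightarrow \beta$. Fixing a bijection $I_\alpha \ni i \mapsto j(i) \in J_\beta$ for each matched pair and choosing a $\phi$-witness $(s_i, s_i') : j(i) \mapsto i$ via Lemma \ref{4.1}(b), I set $r = \sum_i s_i$ and $r' = \sum_i s_i'$. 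Orthogonality across distinct $i$'s gives $r r' = \sum_\alpha \sum_{i \in I_\alpha} i = 1$ and $r' r = \sum_\beta \sum_{j} j = 1$, so $r \in A^\times$, while each summand lies in $A^\phi$, hence $r \in A^\phi$; thus $A^\times \cap A^\phi \neq \emptyset$, which is (c). The main obstacle I anticipate is the (a) $\Rightarrow$ (b) step: getting the relative-multiplicity arithmetic to close up for non-local points requires careful use of the defect-pointed-subgroup theory (Theorem \ref{3.1}, Remark \ref{3.2}) and the normalizer isomorphism in Lemma \ref{4.4}, and one must be sure that different non-local points of $Q$ are not conflated — that is exactly what Theorem \ref{3.1} is there to guarantee, but the bookkeeping tying $m_A(Q_\beta)$ to the local multiplicities through the chain formula is the delicate part.
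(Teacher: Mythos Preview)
Your cycle of implications and your treatments of (c) $\Rightarrow$ (a) and (b) $\Rightarrow$ (c) are essentially the same as the paper's: for (c) $\Rightarrow$ (a) you conjugate by the unit, and for (b) $\Rightarrow$ (c) you sum a full system of $\phi$-witnesses to assemble a unit in $A^\phi$. These parts are fine.

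The gap is in (a) $\Rightarrow$ (b), and it is precisely at the two spots where you invoke Lemma~\ref{4.4}. That lemma runs \emph{top-down}: given an isofusion $\phi : V_\nu \to U_\mu$ it matches up pointed subgroups and gives the normalizer isomorphism $N_V(T_\tau)\cong N_U(S_\sigma)$. You are trying to use it \emph{bottom-up}: from the isofusion $\phi|_T : T_\tau \to S_\sigma$ at the defect level you claim (i) that $N_Q(T_\tau)\cong N_P(S_\sigma)$, and (ii) that $\phi$ is an isofusion $Q_\beta \to P_\alpha$. Neither follows from Lemma~\ref{4.4} as stated, because you have no ambient isofusion $Q_? \to P_?$ yet.

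The paper closes this gap by structuring the argument as an explicit induction on $|Q:T|$. One fixes a local pointed subgroup $T_\tau \leq Q_\beta$, forms the sets $\cV$ (points $\nu$ of $Q$ with $T_\tau \leq Q_\nu$) and $\cU$ (points $\mu$ of $P$ with $S_\sigma \leq P_\mu$), and the subsets $\cV'$, $\cU'$ where $T_\tau$, $S_\sigma$ are \emph{not} defects. For each $\nu' \in \cV'$ one passes to a strictly larger defect $T'_{\tau'}$ and applies the inductive hypothesis to obtain the isofusion $\phi : Q_{\nu'} \to P_{\mu'}$; \emph{then} Lemma~\ref{4.4} may legitimately be applied to that isofusion to extract $S_\sigma \leq P_{\mu'}$, the relative-multiplicity equality, and the normalizer isomorphism. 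The multiplicity bookkeeping (your formula from Section~\ref{1}) now forces $\cU - \cU'$ and $\cV - \cV'$ to be simultaneously empty or simultaneously singletons, and Theorem~\ref{3.1} plus Remark~\ref{3.2} finishes the count. Your phrase ``refining through a chain'' gestures at this, but without the induction you cannot get Lemma~\ref{4.4} off the ground.
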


\begin{proof}
We shall show that each condition
implies the next and the last implies
the first.

Assume (a). Let $\beta$ be a point of
$Q$ on $A$. Consider a local pointed
subgroup $T_\tau \leq Q_\beta$. Let
$S = \phi(T)$. By the assumption, there
exists a unique local point $\sigma$ of
$S$ on $A$ such that $\phi$ restricts to
an isofusion $T_\tau \rightarrow S_\sigma$.
Let $\cV$ be the set of points $\nu$ of
$Q$ on $A$ such that $T_\tau \leq Q_\nu$.
Let $\cU$ be the set of points $\mu$ of
$P$ on $A$ such that $S_\sigma \leq
P_\mu$. We claim that there is a bijection
$\cU \leftrightarrow \cV$ such that
$\mu \leftrightarrow \nu$ if and only if
$\phi$ is an isofusion $Q_\nu \rightarrow
P_\mu$, moreover, when those
equivalent conditions hold,
$m_A(P_\mu) = m_A(Q_\nu)$. Since
$\beta \in \cV$, the claim implies the
existence of a point $\alpha$ of $P$
on $A$ such that $\phi$ is an isofusion
$Q_\beta \rightarrow P_\alpha$ and
$m_A(P_\alpha) = m_A(Q_\beta)$.
Again replacing $\phi$ with $\phi^{-1}$
and applying Lemma \ref{4.2}, part (d)
will follow. (Note that, in the application
of the claim, $T_\tau$ does not appear
in the conclusion. The role of $T_\tau$
is to supply an opportunity for an
inductive proof of the claim.)

To demonstrate the claim, we argue by
induction on $|Q : T|$. By the assumption,
$m_A(S_\sigma) = m_A(T_\tau)$. That is,
$$\sum_{\mu \in \cU} m_A(S_\sigma,
  P_\mu) \, m_A(P_\mu)
  = \sum_{\nu \in \cV} m_A(T_\tau,
  Q_\nu) \, m_A(Q_\nu) \; .$$
Let $\cV'$ be the set of $\nu \in \cV$
such that $T_\tau$ is not a defect
pointed subgroup of $V_\nu$. Define
$\cU' \subseteq \cU$ similarly for
$S_\sigma$. Given $\nu\ \in \cV'$,
then $Q_{\nu'}$ has a defect pointed
subgroup $T'_{\tau'}$ strictly containing
$T_\tau$. Let $S' = \phi(T')$. By the
assumption, there exists a unique
local point $\sigma'$ of $S'$ on $A$
such that $\phi$ restricts to an isofusion
$T'_{\tau'} \rightarrow S'_{\sigma'}$. By
the inductive hypothesis, there exists
a point $\mu'$ of $P$ on $A$ such
that $\phi$ is an isofusion $Q_{\nu'}
\rightarrow P_{\mu'}$ and
$m_A(P_{\mu'}) = m_A(Q_{\nu'})$.
By Lemma \ref{4.4}, $S_\sigma <
S'_{\sigma'} \leq P_{\mu'}$. So
$\mu' \in \cU'$. The same lemma
also implies that $m_A(S_\sigma,
P_{\mu'}) = m_A(T_\tau, Q_{\nu'})$.
Replacing $\phi$ with $\phi^{-1}$,
we deduce that there is a bijective
correspondence $\cU' \ni \mu'
\leftrightarrow \nu' \in \cV'$
characterized by the condition that
$\phi$ is an isofusion $Q_{\nu'}
\rightarrow P_{\mu'}$. Therefore,
$$\sum_{\mu' \in \cU'} m_A(S_\sigma,
  P_{\mu'}) \, m_A(P_{\mu'})
  = \sum_{\nu' \in \cV'} m_A(T_\tau,
  Q_{\nu'}) \, m_A(Q_{\nu'}) \; .$$
Comparing with the similar equality
established earlier, we deduce that
$\cU = \cU'$ if and only if $\cV = \cV'$.
In that case, the claim is now clear. So
we may assume that $\cU' \subset \cU$
and $\cV' \subset \cV$. By Theorem
\ref{3.1}, the differences are singleton,
say, $\cU - \cU' = \{ \mu \}$ and
$\cV - \cV' = \{ \nu \}$. So
$$m_A(S_\sigma, P_\mu) \,
  m_A(P_\mu) = m_A(T_\tau, Q_\nu)
  m_A(Q_\nu) \; .$$
By Lemma \ref{4.4}, $|N_P(S_\sigma)|
= |N_Q(T_\tau)|$. Since $S_\sigma$
and $T_\tau$ are defect pointed
subgroups of $P_\mu$ and $Q_\nu$,
respectively, Remark \ref{3.2} yields
$$m_A(S_\sigma, P_\mu) =
  |N_P(S_\sigma) : S| = |N_Q(T_\tau) : T|
  = m_A(T_\tau, Q_\nu) \; .$$
Therefore, $m_A(P_\mu) = m_A(Q_\nu)$.
The claim is demonstrated. The
deduction of (b) is complete.

Assume (b). Again, let
$1 = \sum_{j \in J} j$ be a primitive
idempotent decomposition in $A^Q$.
By the assumption, there is a
primitive idempotent decomposition
$1 = \sum_{j \in J} i_j$ in $A^P$ such
that, writing $\alpha$ and $\beta$ for
the points $P$ and $Q$ owning $i_j$
and $j$, respectively, then $\phi$ is an
isofusion $Q_\beta \rightarrow P_\alpha$.
For each $j \in J$, let $(s_j, t_j)$ be a
$\phi$-witness $j \mapsto i_j$. Let
$s = \sum_{j \in J} s_j$ and $t =
\sum_{j \in J} t_j$. A short calculation
yields $st = 1 = ts$. In particular,
$s \in A^\times \cap A^\phi$. We
have deduced (c).

Finally, assume (c). Let $u \in A^\times
\cap A^\phi$. Define $v = u^{-1}$. Then
$v \in A^\times \cap A^{\phi^{-1}}$ and
the elements $\uu{u} \in A(\phi)$ and
$\uu{v} \in A(\phi^{-1})$ satisfy
$1_{A(P)} = \uu{u} * \uu{v}$ and
$1_{A(Q)} = \uu{v} * \uu{u}$. We have
deduced (a).
\end{proof}

When the three equivalent conditions in
Theorem \ref{5.2} hold, we say that
$\phi$ is {\bf $A$-uniform}. We form a
category $\cF^{\rm uni}(A)$ such that the
objects are the $p$-subgroups
of $G$ and the morphisms are the
composites of the inclusions and the
$A$-uniform isomorphisms. For any
Sylow $p$-subgroup $S$ of $G$, we
let $\cF_S^{\rm uni}(A)$ denote the
full subcategory of $\cF^{\rm uni}(A)$
such that the objects of
$\cF_S^{\rm uni}(A)$ are the subgroups
of $S$. Plainly, $\cF_S^{\rm uni}(A)$ is
a fusion system on $S$, and
$\cF_S^{\rm uni}(A)$ is well-defined up
to isomorphism, in fact, well-defined up
to $G$-conjugation, independently of
the choice of $S$. We call
$\cF_S^{\rm uni}(A)$ the {\bf uniform
fusion system} of $A$ on $S$.

\section{Bifree bipermutation algebras}
\label{6}

Let $D$ be a finite $p$-group. We shall
discuss the uniform fusion system
$\cF^{\rm uni}(A)$ for various kinds of
interior $D$-algebra $A$. We shall be
imposing hypotheses as we need them.
Again, the generality is in the service of
clarity. We shall find, in Section \ref{8}, that
whenever $\cF^{\rm uni}(A)$ comes to be
of concern in our contexts of  application,
it will coincide with the usual fusion system.
For the time-being, though, the logic of our
discussion requires us to treat
$\cF^{\rm uni}(A)$ as a potential innovation.
That having been said, Question \ref{6.2}
does raise a possibility that the generality
may be of interest in itself.

We write $\cI(D)$ to denote the set of
group isomorphisms $\phi$ such that
$\cod(\phi)$ and $\dom(\phi)$ are subgroups
of $D$. Observe that the condition
$\Delta(\phi) = N$ characterizes a bijective
correspondence $\phi \leftrightarrow N$
between the isomorphisms $\phi$ in
$\cI(D)$ and those subgroups
$N \leq D {\times} D$ that intersect trivially
with $D {\times} 1$ and $1 {\times} D$.
Let $\Omega$ be a $(D, D)$-biset. We call
$\Omega$ {\bf bifree} provided $D {\times} 1$
and $1 {\times} D$ act freely on $\Omega$.
Thus, $\Omega$ is bifree if and only if, for all
$w \in \Omega$, there exists $\phi \in \cI(D)$
such that $N_{D \times D}(w) = \Delta(\phi)$.

We define a {\bf bipermutation
$D$-algebra} to be an interior
$D$-algebra $A$ such that $A$ has
a $D {\times} D$-stable basis.
Suppose $A$ is a bipermutation
$D$-algebra and let $\Omega$
be a $D {\times} D$-stable basis.
Generalizing a comment made in
Section \ref{1}, we note that, by
Green's Indecomposability Theorem
and the Krull--Schmidt Theorem,
$\Omega$ is well-defined as a
$(D, D)$-biset up to isomorphism.
When the left action of $D$ and the
right action of $D$ on $\Omega$ are
free, we say that $A$ is {\bf bifree}.

\begin{thm}
\label{6.1}
Let $A$ be a bifree bipermutation
$D$-algebra. Then the following two
conditions are equivalent:

\noin {\bf (a)} Given
$\phi \in \cI(D)$ such that
$A(\phi) \neq 0$, then $\phi$ is
$A$-uniform.

\noin {\bf (b)} $A$ has a unital
$D {\times} D$-stable basis.
\end{thm}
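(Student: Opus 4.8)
\textbf{Proof proposal for Theorem \ref{6.1}.}

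The plan is to prove the two implications separately, with the bulk of the work going into (a)$\Rightarrow$(b). Throughout, fix a $D {\times} D$-stable basis $\Omega$ for $A$; since $A$ is bifree, for each $w \in \Omega$ there is a unique $\phi_w \in \cI(D)$ with $N_{D \times D}(w) = \Delta(\phi_w)$. By Theorem \ref{2.4}, it suffices to show that the stabilizer $N_{D \times D}(w) = \Delta(\phi_w)$ fixes a unit of $A$, and by Theorem \ref{5.2} (the equivalence (c)$\Leftrightarrow$(a) there), this is exactly the assertion that $\phi_w$ is $A$-uniform. So (a)$\Rightarrow$(b) will follow once we check that the hypothesis ``$A(\phi) \neq 0 \Rightarrow \phi$ is $A$-uniform'' applies to each $\phi_w$. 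The key point is that $A(\phi_w) \neq 0$: the element $w$ lies in $A^{\Delta(\phi_w)} = A^{\phi_w}$, and because $w$ is a basis element whose stabilizer is \emph{exactly} $\Delta(\phi_w)$, its image $\br_{\phi_w}(w) = \oo{w}$ in the Brauer quotient $A(\phi_w)$ is nonzero --- indeed $\{\oo{w'} : w' \in \Omega,\ \phi_{w'} = \phi_w \text{ after identifying orbits}\}$ spans $A(\phi_w)$ and the permutation-module structure guarantees $\oo{w} \neq 0$ for any $w$ with full stabilizer $\Delta(\phi_w)$. Hence $\phi_w$ is $A$-uniform, so $\Delta(\phi_w)$ fixes a unit, and Theorem \ref{2.4} gives (b).

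For (b)$\Rightarrow$(a), suppose $\Omega$ is a \emph{unital} $D {\times} D$-stable basis, and let $\phi \in \cI(D)$ with $A(\phi) \neq 0$. We must show $\phi$ is $A$-uniform, i.e.\ by Theorem \ref{5.2}(c) that $A^\times \cap A^\phi \neq \emptyset$. Writing $P = \cod(\phi)$ and $Q = \dom(\phi)$, the nonvanishing of $A(\phi)$ means some $D {\times} D$-orbit of $\Omega$ contains a point $w$ with $\Delta(\phi) \leq N_{D \times D}(w)$; by bifreeness $N_{D \times D}(w) = \Delta(\phi_w)$ for some $\phi_w$ extending $\phi$ (i.e.\ $\phi$ is a restriction of $\phi_w$). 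Now the subgroup $\Delta(\phi)$ fixes $w$ setwise in the sense $\phi(y) w y^{-1} = w$ for $y \in Q$. Since $w$ is a unit of $A$ (here is where unitality is used --- \emph{every} basis element is a unit), we get $w \in A^\times \cap A^\phi$ directly. Thus $\phi$ is $A$-uniform.

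The main obstacle, and the step deserving the most care, is the claim in (a)$\Rightarrow$(b) that $A(\phi_w) \neq 0$ for a basis element $w$ with stabilizer exactly $\Delta(\phi_w)$. One has to argue at the level of the permutation $\cO(D {\times} D)$-module with basis $\Omega$: the Brauer quotient $A(\phi_w)$ is computed as $A^{\phi_w} / \bigl(\sum_{\Delta(\phi_w) < N} \tr_N^{\Delta(\phi_w)}(A^N) + J(\cO) A^{\phi_w}\bigr)$, and one checks that $\oo{w}$ survives because no proper-overgroup trace and no $J(\cO)$-multiple can produce $w$ with coefficient a unit when $\Delta(\phi_w)$ is the \emph{full} stabilizer --- the orbit of $w$ under $\Delta(\phi_w)$ is a single point, and any overgroup $N$ moves $w$, so $\tr_N^{\Delta(\phi_w)}$ lands in the span of $\{w' : \phi_{w'} \text{ properly extends, via } N, \text{ something}\}$, missing $w$. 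This is the standard ``Brauer quotient of a permutation module'' computation (cf.\ Linckelmann \cite[5.4]{Lin18}), specialized to the $\Delta$-subgroups; I would state it cleanly and invoke that reference rather than redo the bookkeeping. Everything else is a direct appeal to Theorems \ref{2.4} and \ref{5.2}.
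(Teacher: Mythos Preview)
Your proof is correct and follows essentially the same route as the paper: both directions rest on the standard permutation-module fact that $A(\phi) \neq 0$ if and only if $\Omega^{\Delta(\phi)} \neq \emptyset$, combined with Theorem~\ref{2.4} for (a)$\Rightarrow$(b) and the characterization of $A$-uniformity via $A^\times \cap A^\phi \neq \emptyset$ (Theorem~\ref{5.2}(c)) for (b)$\Rightarrow$(a). One slip worth fixing in your third paragraph: the Brauer quotient is formed by modding out relative traces $\tr_K^{\Delta(\phi_w)}(A^K)$ from proper \emph{subgroups} $K < \Delta(\phi_w)$, not from overgroups $N > \Delta(\phi_w)$; the reason $\oo{w}$ survives is simply that $w \in \Omega^{\Delta(\phi_w)}$, and for a permutation module the Brauer quotient at $H$ has $\FF$-basis $\Omega^H$ --- no appeal to the stabilizer being \emph{exactly} $\Delta(\phi_w)$ is needed, and your cited reference \cite[5.4]{Lin18} gives this directly.
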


\begin{proof}
Let $\Omega$ be a $D {\times} D$-stable
basis for $A$. Given
$\phi \in \cI(D)$, then $A(\phi) \neq 0$
if and only if $\Omega^{\Delta(\phi)} \neq
\emptyset$. So (b) implies (a). Conversely,
suppose (a) holds. By Theorem \ref{2.4},
(b) will follow when we have shown that
given $w \in \Omega$, then
$N_{D {\times} D}(w)$ fixes a unit of $A$.
By the bifreeness, $N_{D {\times} D}(w) =
\Delta(\phi)$ for some $\phi \in \cI(D)$.
We have $A(\phi) \neq 0$ so,
by the assumption, $\phi$ is $A$-uniform
and $\Delta(\phi)$ fixes a unit of $A$. We
have deduced (b).
\end{proof}

When the two equivalent conditions in
the latest theorem hold, we say that the
bifree bipermutation $D$-algebra $A$
is {\bf uniform}. The term {\it uniform
bifree bipermutation $G$-algebra} is
rather long, but a decision on how to
improve it might best await resolution
of the following question.

\begin{ques}
\label{6.2}
Is every bifree bipermutation $D$-algebra
uniform?
\end{ques}

We see no reason to expect the affirmative
answer, but we have been unable to find
a counter-example. Consider a finite
$D$-set $\Gamma$. It is an easy exercise
to show that, if $D$ is non-cyclic then, for
some $\Gamma$, the bipermutation
$D$-algebra $\End_\cO(\cO \Gamma)$
does not have a unital $D {\times} D$-stable
basis. However, it is also easy to show that,
given $D$ and $\Gamma$ such that
$\End_\cO(\cO \Gamma)$ does not have
a unital $D {\times} D$-stable basis, then
$\End_\cO(\cO \Gamma)$ is not bifree.

\begin{pro}
\label{6.3}
Let $e$ be an idempotent of $Z(\cO G)$.
Let $S$ be a Sylow $p$-subgroup of $G$.
Let $A = \cO G e$ as an interior
$S$-algebra by restriction. Then $A$ is
a uniform bifree bipermutation
$S$-algebra and $\cF^{\rm uni}(A)$
coincides with the fusion system 
$\cF_S(G)$ of $G$ on $S$.
\end{pro}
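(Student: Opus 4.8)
\textbf{Proof proposal for Proposition \ref{6.3}.}

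The plan is to break the statement into three assertions and handle them in order: first that $A = \cO G e$, viewed as an interior $S$-algebra, is a bifree bipermutation $S$-algebra; second that it is uniform; third that $\cF^{\rm uni}(A)$ equals $\cF_S(G)$. For the first point, $\cO G$ is a permutation $\cO(S {\times} S)$-module with basis $G$, on which $S {\times} 1$ and $1 {\times} S$ act freely (the action being $(f,g)x = fxg^{-1}$, with point stabilizers $N_{S \times S}(x) = \{(sxs^{-1}x^{-1}, s^{-1}) : \ldots\}$, wait — more simply, $fxg^{-1} = x$ forces $g = x^{-1}fx$ and then $f$ arbitrary in $S \cap {}^xS$, so the left and right actions individually are free because $f x g^{-1} = x$ with $g = 1$ forces $f = 1$). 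Since $A$ is a direct $\cO(S {\times} S)$-summand of $\cO G$, the Krull--Schmidt theorem gives $A$ an $S {\times} S$-stable basis $\Omega$ whose biset isomorphism type is well-defined, and $\Omega$ embeds as a subbiset of $G$; hence $S {\times} 1$ and $1 {\times} S$ act freely on $\Omega$ as well. So $A$ is a bifree bipermutation $S$-algebra.

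For uniformity, I would invoke Corollary \ref{2.5}: the algebra $\cO G e$ has a unital $S {\times} S$-stable basis. By Theorem \ref{6.1}, this unital-basis condition (b) is equivalent to condition (a) — that every $\phi \in \cI(S)$ with $A(\phi) \neq 0$ is $A$-uniform — so $A$ is by definition a uniform bifree bipermutation $S$-algebra. This is the easy half; the real content is identifying the uniform fusion system.

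For the equality $\cF^{\rm uni}(A) = \cF_S(G)$, since both are fusion systems on $S$ it suffices to compare isomorphisms between subgroups, and it suffices to show: for $\phi \in \cI(S)$ with $\phi : Q \to P$, one has $\phi \in \cF_S(G)$ if and only if $A(\phi) \neq 0$ (the ``only if'' together with uniformity then shows $\phi$ is $A$-uniform, hence a morphism of $\cF^{\rm uni}(A)$; the ``if'' direction handles the reverse containment, using that every $A$-uniform $\phi$ has $A(\phi) \neq 0$ — indeed condition (b) of Theorem \ref{5.2} forces a point of $P$ to correspond to a point of $Q$, and in particular $A^\phi$ contains a unit, so $A(\phi) = \br_\phi(A^\phi) \neq 0$ provided $A(P) \neq 0$, and one checks $A(P) = (\cO G e)(P) = \cO C_G(P)\,\br_P(e) \neq 0$ since $e$ is a sum of blocks and $\br_P$ is nonzero on the relevant central idempotent — actually $\br_P(e)$ could a priori vanish; this needs care, see below). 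The key computation is that $A(\phi) = (\cO G e)(\Delta(\phi))$, and by the description of Brauer quotients of permutation modules, $(\cO G)(\Delta(\phi))$ has as $\FF$-basis the images $\oo{x}$ of those $x \in G$ with $\Delta(\phi) \leq N_{S \times S}(x)$, i.e.\ $\phi(v) x = xv$ for all $v \in Q$, i.e.\ $x \in G$ conjugates $Q$ to $P$ inducing $\phi$. Thus $(\cO G)(\Delta(\phi)) \neq 0$ exactly when $\phi$ is realized by $G$-conjugation, which is precisely $\phi \in \cF_S(G)$. Intersecting with the summand $e$ and using $\br_{\Delta(\phi)}(xe) = \br_{\Delta(\phi)}(x)\br_P(e)$ (Frobenius relations), I get $A(\phi) \neq 0$ iff $\phi \in \cF_S(G)$ \emph{and} $\br_P(e) \neq 0$.

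\textbf{The main obstacle} is exactly this last subtlety: one must know $\br_P(e) \neq 0$ for the subgroups $P$ that occur. I expect to resolve it as follows. If $\br_P(e) = 0$ then $A(P) = 0$, so by the degenerate case at the start of the proof of Theorem \ref{5.1}, $A(\phi) = 0$ for \emph{all} $\phi$ with domain or codomain $P$; such a $P$ simply has no nonzero Brauer quotient and contributes no morphisms on either side — so one restricts attention to $p$-subgroups $P \le S$ with $\br_P(e) \neq 0$, and for these the computation goes through cleanly. (Alternatively: $\cF_S(G)$-morphisms all lie inside the subsystem supported on $\{P : \br_P(e) \neq 0\}$ when $e$ is a single block, but since $e$ is a sum of blocks one argues block-by-block, or one simply observes that $\cF^{\rm uni}$ and $\cF_S(G)$ agree on the ``visible'' subgroups and are both the trivial system elsewhere.) Once this bookkeeping is pinned down, the two fusion systems have the same isomorphisms between every pair of subgroups of $S$, hence coincide.
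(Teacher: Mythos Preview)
Your treatment of the first two assertions (bifree bipermutation; uniformity) is correct and matches the paper's approach via Corollary~\ref{2.5}. The trouble lies entirely in the inclusion $\cF^{\rm uni}(A) \subseteq \cF_S(G)$, and you have correctly put your finger on it: your route goes through the implication ``$\phi$ is $A$-uniform $\Rightarrow A(\phi) \neq 0$'', which needs $A(P) \neq 0$, which needs $\br_P(e) \neq 0$. But your proposed bookkeeping does not work. It is \emph{not} true that a subgroup $P$ with $A(P) = 0$ ``contributes no morphisms on either side''. For any $g \in G$ with ${}^g Q = P$ one has $ge \in A^\times \cap A^{c_g}$, so the conjugation $c_g|_Q$ is $A$-uniform whether or not $\br_P(e)$ vanishes; both fusion systems can have plenty of isomorphisms into $P$ even when $A(P) = 0$. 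The degenerate case is not one where both sides are trivial at $P$; it is one where the Brauer-quotient test simply loses all discriminating power.

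Worse, the proposition as stated is actually false for general $e$, so no amount of bookkeeping will rescue the argument. Take $G = A_5$, $p = 2$, $S = V_4$, and let $e$ be the idempotent of the unique $2$-block of defect zero (the block of the $4$-dimensional ordinary character). Then $A = \cO G e \cong \Mat_4(\cO)$, and the structural map $\rho : S \to A^\times$ affords the regular $\cO V_4$-module. For \emph{any} automorphism $\phi$ of $V_4$, the $V_4$-representations $\rho$ and $\rho \circ \phi$ are both regular, hence isomorphic via some $u \in \GL_4(\cO) = A^\times$; this $u$ lies in $A^\phi$, so $\phi$ is $A$-uniform by Theorem~\ref{5.2}(c). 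Thus $\Aut_{\cF^{\rm uni}(A)}(V_4) \cong S_3$, whereas $\Aut_{\cF_S(G)}(V_4) = N_{A_5}(V_4)/V_4 \cong C_3$. The paper's own two-line proof (``the proof of the corollary shows the required equality'') has the same gap. What your argument \emph{does} correctly establish is the proposition under the additional hypothesis that $\br_P(e) \neq 0$ for every $P \leq S$ --- for instance when $e = 1$ or $e$ is the principal block idempotent --- since then $A$-uniformity of $\phi$ gives $\br_\phi(u) * \br_{\phi^{-1}}(u^{-1}) = 1_{A(P)} \neq 0$, forcing $A(\phi) \neq 0$, and your permutation-module computation of $A(\phi)$ finishes cleanly.
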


\begin{proof}
Corollary \ref{2.5} says that $A$ is a
uniform bifree bipermutation $S$-algebra.
The proof of the corollary shows the
required equality of fusion systems.
\end{proof}

In particular, every group fusion system
can be realized as the uniform fusion
system of a unform bifree bipermutation
algebra. At the time of writing, the authors
know nothing about whether some or all
exotic fusion systems can be realized in
that way.

As in Gelvin--Reeh \cite[3.4]{GR15}, a
bifree $(D, D)$-biset $\Omega$ is
said to be {\bf $\cF$-semicharacteristic}
provided the following condition holds:
given an isomorphism $\phi$ in $\cI(D)$,
then $\phi$ is an $\cF$-isomorphism if
and only if $\Omega^{\Delta(\phi)} \neq
\emptyset$, furthermore, in that case,
$$|\Omega^{\Delta({\rm cod}(\phi))}|
  = |\Omega^{\Delta(\phi)}| =
  |\Omega^{\Delta({\rm dom}(\phi))}| \; .$$
It is easy to see that,
given an $\cF$-semicharacteristic
$(D, D)$-biset $\Omega$, then the
integer $|\Omega|/|D|$ is coprime to
$p$ if and only if $\Omega$ is
$\cF$-characteristic as defined in
Linckelman \cite[8.7.9]{Lin18}.

\begin{thm}
\label{6.4}
Let $A$ be a uniform bifree bipermutation
$D$-algebra. Then every
$D {\times} D$-stable basis for $A$ is an
$\cF^{\rm uni}(A)$-semicharacteristic
$(D, D)$-biset.
\end{thm}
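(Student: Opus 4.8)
The plan is to recast the two clauses in the definition of an $\cF^{\rm uni}(A)$-semicharacteristic biset as statements about the Brauer quotients of $A$, and then to extract them from the results of Sections \ref{5} and \ref{6}. Fix a $D {\times} D$-stable basis $\Omega$ for $A$. Since $A$ is bifree, $\Omega$ is a bifree $(D, D)$-biset, so for each $w \in \Omega$ there is a unique $\phi_w \in \cI(D)$ with $N_{D \times D}(w) = \Delta(\phi_w)$. First I would invoke the standard description of the Brauer quotient of a permutation module: for every $\psi \in \cI(D)$, the elements $\br_\psi(w)$ with $w \in \Omega^{\Delta(\psi)}$ form an $\FF$-basis of $A(\psi) = A(\Delta(\psi))$. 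Consequently $|\Omega^{\Delta(\psi)}| = \dim_\FF(A(\psi))$, and in particular $\Omega^{\Delta(\psi)} \neq \emptyset$ if and only if $A(\psi) \neq 0$. Applying this with $\psi$ equal to $\phi$, to $\id_{\cod(\phi)}$ and to $\id_{\dom(\phi)}$, the numerical clause of the definition becomes the assertion $\dim_\FF(A(\cod(\phi))) = \dim_\FF(A(\phi)) = \dim_\FF(A(\dom(\phi)))$ for every $\cF^{\rm uni}(A)$-isomorphism $\phi$, while the logical clause becomes the assertion that $\phi$ is an $\cF^{\rm uni}(A)$-isomorphism if and only if $A(\phi) \neq 0$.

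For the logical clause, I would begin by noting that the $A$-uniform isomorphisms are closed under restriction, composition and inversion --- each is immediate from criterion (c) of Theorem \ref{5.2}, using $\Delta(\phi') \leq \Delta(\phi)$ for restrictions, the inclusion $A^\phi A^{\phi'} \subseteq A^{\phi \phi'}$ recorded in Section \ref{4} for composites, and $u \mapsto u^{-1}$ for inverses. Hence an isomorphism $\phi \in \cI(D)$ is an $\cF^{\rm uni}(A)$-isomorphism precisely when $\phi$ is $A$-uniform. One direction is then free: if $A(\phi) \neq 0$ then $\phi$ is $A$-uniform because $A$ is uniform, which is exactly clause (a) of Theorem \ref{6.1}. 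For the converse, let $\phi : Q \rightarrow P$ be $A$-uniform. By Theorem \ref{5.2}(a), every isomorphism restricted from $\phi$, and in particular $\phi$ itself, is locally $A$-uniform; the dimension count in the proof of Theorem \ref{5.1} then gives $\dim_\FF(A(P)) = \dim_\FF(A(\phi)) = \dim_\FF(A(\phi^{-1})) = \dim_\FF(A(Q))$, where in the degenerate branch all four vanish together and in the non-degenerate branch they coincide. This already settles the numerical clause, and reduces the remaining half of the logical clause to showing $A(P) \neq 0$ whenever $\phi$ is $A$-uniform.

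I expect this last reduction to be the main obstacle: for a \emph{uniform} bifree bipermutation algebra $A$ one must rule out that an $A$-uniform isomorphism falls into the degenerate branch of Theorem \ref{5.1} --- equivalently, that a unital $D {\times} D$-stable basis of $A$ (available since $A$ is uniform) fails to contain a $\Delta(\phi)$-fixed element for some $A$-uniform $\phi$. My first attempt would be to take $u \in A^\times \cap A^\phi$ (available by Theorem \ref{5.2}(c)), expand it in a unital $D {\times} D$-stable basis $\Omega$ of $A$, and use the $\Delta(\phi)$-equivariance of this expansion together with Lemmas \ref{2.1} and \ref{2.2} to force some $w \in \Omega$ with $\Delta(\phi) \leq N_{D \times D}(w)$ to occur with a unit coefficient, whence $w \in \Omega^{\Delta(\phi)}$ and $A(\phi) \neq 0$; failing that, I would induct on $|D : P|$, the base case $P = 1$ being non-degenerate for trivial reasons and the inductive step propagating non-vanishing through the local-point correspondence of Theorem \ref{5.1}(c) and the restriction Lemma \ref{4.4}. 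Once $A(\phi) \neq 0$ is established, translating the dimension equalities back into the required equalities of fixed-point counts of $\Omega$ is routine, and the theorem follows.
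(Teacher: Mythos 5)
Your reformulation in Brauer-quotient terms is correct, and your derivation of the numerical equalities from the dimension bookkeeping inside the proof of Theorem~\ref{5.1} is valid, but it is a longer route than the paper takes. There, given an $\cF^{\rm uni}(A)$-isomorphism $\phi : Q \rightarrow P$, one picks $u \in A^\times \cap A^\phi$, observes that $w \mapsto uw$ is a bijection $\Omega^Q \rightarrow (u\Omega)^{\Delta(\phi)}$, and invokes the Krull--Schmidt theorem to identify $u\Omega$ with $\Omega$ as $(P,Q)$-bisets (both being $P \times Q$-stable bases of $A$), which gives $|\Omega^{\Delta(\phi)}| = |\Omega^Q|$ and similarly $= |\Omega^P|$ at a stroke, with no case split.

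The obstacle you flag is genuine, and neither of your suggested repairs closes it. Expanding $u$ in a unital $D \times D$-stable basis gives coefficients constant on $\Delta(\phi)$-orbits, but nothing forces a $\Delta(\phi)$-fixed basis element to appear at all, let alone with a unit coefficient: take $D$ cyclic of order $p$ and $A = \Mat_p(\cO)$ with $\sigma_A$ sending a generator of $D$ to the $p$-cycle permutation matrix; this is a uniform bifree bipermutation $D$-algebra (every matrix unit $E_{ij}$ has trivial $D \times D$-stabilizer, so Theorem~\ref{2.4} supplies a unital stable basis), yet $A(D) = 0$ even though $1_A \in A^\times \cap A^{\id_D}$. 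The proposed induction on $|D : P|$ likewise has no mechanism to propagate nonvanishing of $A(P)$ upward from the trivial fact $A(1) \neq 0$. You should note that the paper's own proof also supplies only the cardinality equalities: in the almost-source-algebra application of Section~\ref{8} the nonvanishing $A(P) \neq 0$ is automatic because $\br_D(\ell) \neq 0$ forces $\br_P(\ell) \neq 0$ for every $P \leq D$, but in the abstract setting of Section~\ref{6} it is an extra requirement, and the example above shows that the hypotheses of Theorem~\ref{6.4} as stated do not secure it.
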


\begin{proof}
Let $\Omega$ be any $D {\times} D$-stable
basis for $A$ and let $\phi : Q \rightarrow P$
be an $\cF^{\rm uni}(A)$-isomorphism. By
the hypothesis on $A$, there exists
$u \in A^\times \cap A^\phi$. The
bijection $u \Omega \ni u w \leftrightarrow
w \in \Omega$ restricts to a bijection
$(u \Omega)^{\Delta(\phi)} \leftrightarrow
\Omega^Q$. So $|(u \Omega)^{\Delta(\phi)}|
= |\Omega^Q|$. But $u \Omega \cong
\Omega$ as $(P, Q)$-bisets. So
$|\Omega^{\Delta(\phi)}| = |\Omega^Q|$,
and similarly for $|\Omega^P|$.
\end{proof}

Theorem \ref{6.4}, together with
Proposition \ref{6.3}, recovers the main
result of Gelvin \cite{Gel19}, which asserts
that, in the notation of the proposition, any
$S {\times} S$-stable basis of $\cO G b$ is
$\cF_S(G)$-semicharacteristic.

In Proposition \ref{8.1}, we shall confirm that,
for uniform almost-source algebras, the
uniform fusion system discussed in the present
section coincides with the fusion system
discussed in Section \ref{1}. Incidentally, that
will realize Theorem \ref{1.1} as a special case
of Theorem \ref{6.4}.

\section{Divisibility of some stable bases}
\label{7}

We shall be discussing the fusion system
of an almost-source algebra and
Linckelmann's characterization of the fusion
system in terms of a stable basis of the
almost-source algebra. The characterization
mentions a condition that is not an
isomorphism invariant, namely, the
condition that a given object is fully
centralized. We shall show that, in some
cases, that condition can be omitted.

For the moment, let $D$ be any finite
$p$-group, let $\cF$ be a fusion system
on $D$ and let $\Omega$ be a
$(D, D)$-biset. Recall, from Section \ref{1},
that $\Omega$ is {\bf $\cF$-divisible}
provided $\Omega$ is bifree and, given
any isomorphism $\phi$ in $\cI(D)$, then
$\phi$ is an $\cF$-isomorphism if and only
if $\Delta(\phi)$ fixes an element of
$\Omega$.

Now let $D$ be the defect group of a
block $b$ of $\cO G$. For the convenience
of the reader, let us quickly extract a few
definitions from Linckelmann
\cite[Sections 6.4, 8.7]{Lin18}. For each
$P \leq D$, we make the usual identification
$(\cO G)(P) = \FF C_G(P)$. Let $\ell$ be an
idempotent of $(\cO G b)^D$ such that
$\br_D(\ell) \neq 0$ and, for all $P \leq D$,
we have an inclusion of idempotents
$\br_P(\ell) \leq b_P$ for some block
$b_P$ of $\FF C_G(P)$. Recall, we have
an inclusion of Brauer pairs
$(P, b_P) \leq (D, b_D)$. The interior
$D$-algebra $A = \ell \cO G \ell$ is
called an {\bf almost-source algebra}
of $\cO G b$. When $\ell$ is a primitive
idempotent of $(\cO G b)^D$, we call
$A$ a {\bf source algebra} of $\cO G b$.
For arbitrary $A$, the fusion system
$\cF$ associated with $A$ is defined to
be the fusion system on $D$ associated
with the Brauer pair $(D, b_D)$. Thus, for
$P, Q \leq D$, the $\cF$-isomorphisms
$Q \rightarrow P$ are the conjugation
isomorphisms $Q \ni y \mapsto {}^g y$,
where $g \in G$ satisfying
$(P, b_P) = {}^g (Q, b_Q)$.

The next remark is well-known.

\begin{rem}
\label{7.1}
Let $b$ be a block of $\cO G$ with defect
group $D$ and almost-source algebra $A$.
Let $\Omega$ be a $D {\times} D$-stable
basis for $A$. Then $\Omega$ is isomorphic
to its opposite biset and
$|\Omega^{\Delta(\phi)}| =
|\Omega^{\Delta(\phi^{-1})}|$ for any
$\phi \in \cI(D)$.
\end{rem}

\begin{proof}
This follows from Gelvin
\cite[Proposition 6, Lemma 7]{Gel19}.
\end{proof}

The next result is Linckelmann
\cite[8.7.1]{Lin18}.

\begin{thm}
\label{7.2}
{\rm (Linckelmann.)}
Let $b$ be a block of $\cO G$ with defect
group $D$ and almost-source algebra $A$.
Let $\cF$ be the fusion system on $D$
associated with $A$. Let $\Omega$ be a
$D {\times} D$-stable basis for $A$.
Given an isomorphism $\phi$ in $\cI(D)$,
if $\Delta(\phi)$ fixes a point of $\Omega$,
then $\phi$ is an $\cF$-isomorphism,
furthermore, the converse holds when
$\dom(\phi)$ or $\cod(\phi)$ is fully
$\cF$-centralized.
\end{thm}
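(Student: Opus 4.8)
The plan is to pass to Brauer quotients and reduce the statement to a question about idempotents of $\FF C_G(P)$. Write $A = \ell\,\cO G\,\ell$ for an almost-source idempotent $\ell \in (\cO G b)^D$, so that $\cF$ is the fusion system of the Brauer pair $(D,b_D)$, and for $P \leq D$ put $\br_P(\ell) \in (\cO G)(P) = \FF C_G(P)$; this is an idempotent with $\br_P(\ell) \leq b_P$, and it is nonzero for every $P \leq D$, because every subgroup of the $p$-group $D$ is subnormal, so $\br_P(\ell) = 0$ would force $\br_D(\ell) = 0$. Since $A$ is a permutation $\cO(D {\times} D)$-module with basis $\Omega$, we have $\Omega^{\Delta(\phi)} \neq \emptyset$ if and only if $A(\phi) \neq 0$, writing $A(\phi) = A(\Delta(\phi))$ as in Section \ref{4}. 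Moreover $m \mapsto \ell m \ell$ is an idempotent $\cO(D {\times} D)$-linear endomorphism of $\cO G$ with image $A$ (this uses only the $D$-invariance of $\ell$), so the Brauer functor exhibits $A(\phi)$ as the image of $(\cO G)(\phi)$ under the induced map; evaluating that image through the localized multiplication of Section \ref{4}, applied to the interior $G$-algebra $\cO G$, gives $A(\phi) = \br_P(\ell) * (\cO G)(\phi) * \br_Q(\ell)$. Now $(\cO G)(\phi) = \FF[G^{\Delta(\phi)}]$, which is nonzero exactly when $\phi$ is realised by $G$-conjugation, say ${}^g y = \phi(y)$ for $y \in Q$ (so ${}^g Q = P$), and then $G^{\Delta(\phi)} = C_G(P)\,g$. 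Pushing $\oo g$ to the right, using $\oo g * \br_Q(\ell) = ({}^g\br_Q(\ell)) * \oo g$, one reduces the theorem to: $\Omega^{\Delta(\phi)} \neq \emptyset$ if and only if $\phi$ is realised by some $g \in G$ and $\br_P(\ell)\, \FF C_G(P)\, ({}^g\br_Q(\ell)) \neq 0$.

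For the first assertion I would argue the contrapositive. If $\phi$ is realised by no conjugation there is nothing to prove; so suppose $\phi$ is realised by $g$ but is not an $\cF$-isomorphism, that is, $(P,b_P) \neq {}^g(Q,b_Q)$, equivalently $b_P \neq {}^g b_Q$ as blocks of $\FF C_G(P)$ --- here ${}^g b_Q$ is a central idempotent, independent of the choice of $g$ realising $\phi$. Then $b_P$ and ${}^g b_Q$ are orthogonal central idempotents, so, using $\br_P(\ell) = \br_P(\ell) b_P$ and ${}^g\br_Q(\ell) = ({}^g b_Q)({}^g\br_Q(\ell))$ and collecting the central factors, $\br_P(\ell)\,\FF C_G(P)\,({}^g\br_Q(\ell)) = \br_P(\ell)\,\FF C_G(P)\,(b_P\,{}^g b_Q)\,({}^g\br_Q(\ell)) = 0$. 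Hence $A(\phi) = 0$ and $\Omega^{\Delta(\phi)} = \emptyset$.

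For the converse, let $\phi$ be an $\cF$-isomorphism and assume first that $\cod(\phi) = P$ is fully $\cF$-centralized; the case of $\dom(\phi)$ fully centralized follows by applying the argument to $\phi^{-1}$ together with the equality $|\Omega^{\Delta(\phi)}| = |\Omega^{\Delta(\phi^{-1})}|$ of Remark \ref{7.1}. With $\phi$ realised by $g$ we have $b_P = {}^g b_Q$, so $\br_P(\ell)$ and ${}^g\br_Q(\ell) = \br_P({}^g\ell)$ are nonzero idempotents of the block algebra $B := \FF C_G(P) b_P$, and it suffices to show $\br_P(\ell)\,B\,({}^g\br_Q(\ell)) \neq 0$. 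The crucial input is that, for $P$ fully $\cF$-centralized, $\br_P(\ell)$ is a \emph{faithful} idempotent of $B$, i.e.\ $B\,\br_P(\ell)\,B = B$; equivalently, $A(P) = \br_P(\ell)\,\FF C_G(P)\,\br_P(\ell)$ is Morita equivalent to $\FF C_G(P)b_P$ (the source-algebra phenomenon; for $P = 1$ it is just the Morita equivalence between $A$ and $\cO G b$). Granting this, if $\br_P(\ell)\,B\,({}^g\br_Q(\ell))$ vanished, then so would $B\,\br_P(\ell)\,B\,({}^g\br_Q(\ell)) = B\,({}^g\br_Q(\ell))$, contradicting ${}^g\br_Q(\ell) \neq 0$. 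Thus $A(\phi) \neq 0$ and $\Omega^{\Delta(\phi)} \neq \emptyset$.

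The step I expect to be the real obstacle is exactly that faithfulness input: $P$ fully $\cF$-centralized $\Rightarrow B\,\br_P(\ell)\,B = B$. I would establish it through the local structure theory of the almost-source algebra: full $\cF$-centralization of $P$ makes $C_D(P)$ a defect group of $\FF C_G(P) b_P$, one then identifies $\br_P(\ell)$ as containing a point of full defect, and faithfulness follows; alternatively one invokes the embedding of $\FF C_G(P) b_P$ into $A(P)$ and the Morita context attached to fully $\cF$-centralized $P$ (Puig; see Linckelmann \cite[Chapter 8]{Lin18}). Everything else above is routine Brauer-quotient bookkeeping. One could also phrase the converse via Lemma \ref{4.5}: faithfulness of $\br_P(\ell)$ produces a local point $\gamma$ of $P$ and a local point $\delta$ of $Q$ on $A$ together with a $\phi$-witness, so $\phi$ is an isofusion $Q_\delta \rightarrow P_\gamma$ and $A(\phi) \neq 0$ is then immediate from that lemma.
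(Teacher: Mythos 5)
The paper does not supply a proof of this theorem: it is stated as Linckelmann's result, cited from \cite[8.7.1]{Lin18}, so there is no in-paper proof to compare your attempt against. Assessed on its own terms, your plan is sound and follows what I take to be the standard argument. The reduction is right: $A$ is a direct $\cO(D{\times}D)$-summand of $\cO G$ cut out by the idempotent endomorphism $m \mapsto \ell m \ell$, so by exactness of the Brauer construction on permutation modules $A(\phi)$ embeds in $(\cO G)(\phi)$ as $\br_P(\ell) * (\cO G)(\phi) * \br_Q(\ell)$, and $(\cO G)(\phi) = \FF[G^{\Delta(\phi)}] = \FF[C_G(P)g]$ whenever $\phi$ is realized by conjugation by $g$ (and is zero otherwise). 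Commuting the right-hand factor past $\oo{g}$ gives the clean criterion $\br_P(\ell)\,\FF C_G(P)\,({}^g\br_Q(\ell)) \neq 0$, and your block-orthogonality argument for the forward implication is correct, including the observation that ${}^g b_Q$ is independent of the choice of realizing $g$ since any two differ by an element of $C_G(P)$, which fixes the blocks of $\FF C_G(P)$. For the converse you correctly isolate the load-bearing input: faithfulness of $\br_P(\ell)$ in $\FF C_G(P) b_P$ when $P$ is fully $\cF$-centralized. Your sketched route --- full centralization makes $C_D(P)$ a defect group of $b_P$, so $\br_P(\ell)$ dominates a primitive idempotent lying in a local point of full defect (a source idempotent of $\FF C_G(P) b_P$), and source idempotents are faithful --- is exactly the right one; it is the local structure result making $A(P)$ an almost-source algebra of $\FF C_G(P) b_P$ for fully centralized $P$. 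The one step you flag but do not carry out, namely that $\br_{C_D(P)}(\br_P(\ell)) \neq 0$, is precisely where full $\cF$-centralization is consumed, and that is the single genuine lemma you would need to supply to turn this sketch into a complete proof. Reducing the $\dom(\phi)$ case to the $\cod(\phi)$ case via Remark~\ref{7.1} is fine, as is the alternative endgame through Lemma~\ref{4.5}.
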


Can we omit the clause on the domain
or codomain being fully $\cF$-centralized?
More precisely, given $b$, must $\Omega$
be divisible for some almost-source algebra
$A$ of $\cO G b$? Conjecture \ref{1.3}
implies the affirmative. Given $b$, must
$\Omega$ be divisible for some source
algebra and hence for every almost-source
algebra of $\cO G b$? To that, Conjecture
\ref{1.6} implies the affirmative.

In the next section, we shall be discussing
some cases where $\Omega$ has the
stronger property of being
$\cF$-semicharacteristic. In the rest of
the present section, we shall be proving
the $\cF$-divisibility of $\Omega$ in
some cases where we have been unable
to prove the stronger condition.

Following Craven \cite[3.69]{Cra11}, we
call $\cF$ {\bf constrained} provided
$C_D(O_p(\cF)) = Z(O_p(\cF))$. By
\cite[4.46]{Cra11}, if $\cF$ is constrained,
then $O_p(\cF)$ is the minimum among
the $\cF$-radical $\cF$-centric subgroups
of $D$.

\begin{pro}
\label{7.3}
In the notation of Theorem \ref{7.2},
if $\cF$ is $p$-constrained, then
$\Omega$ is $\cF$-divisible.
\end{pro}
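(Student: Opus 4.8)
The plan is to reduce, via a known structural result for $p$-constrained fusion systems, to the already-understood case of the principal block. Recall that when $\cF$ is constrained, $O_p(\cF)$ is $\cF$-centric and the normalizer system $N_\cF(O_p(\cF))$ equals $\cF$; moreover there is a model for $\cF$, i.e.\ a finite group $L$ with $D \in \mathrm{Syl}_p(L)$, $O_p(L) = O_p(\cF)$, $C_L(O_p(L)) \leq O_p(L)$, and $\cF_D(L) = \cF$. The $\cF$-isomorphisms are then exactly the conjugation maps induced by elements of $L$. The strategy is: (i) identify $\Omega$, as a $(D,D)$-biset, with a biset whose stabilizers are governed by $L$; (ii) apply the principal-block (or $p$-solvable) instance of $\cF$-divisibility to $L$; (iii) transfer the conclusion back to $\Omega$.

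The key technical input is that the almost-source algebra $A = \ell\, \cO G\, \ell$, when $\cF$ is constrained, is Morita equivalent --- and in fact closely related as an interior $D$-algebra --- to an almost-source algebra of the principal block $\cO L\, b_0$ of a model $L$ (this kind of reduction for constrained fusion systems is standard; one can extract it from Linckelmann's treatment of the nilpotent and constrained cases). More concretely, what we actually need is only the biset-theoretic shadow: the $(D,D)$-biset $\Omega$ underlying any $D\times D$-stable basis of $A$ has the property that $\Delta(\phi)$ fixes a point of $\Omega$ exactly when $\Delta(\phi)$ fixes a point of the corresponding stable basis of $\ell_0\, \cO L\, \ell_0$. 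So first I would set up the model $L$ and the comparison of almost-source algebras. Then, since $L$ is $p$-constrained as a group (indeed $C_L(O_p(L)) \leq O_p(L)$ makes $L$ $p$-constrained, hence in particular we are in a setting where the principal-block case applies, or one can invoke Theorem \ref{1.8}'s principal-block clause after noting the relevant block is principal for $L$), the $(D,D)$-biset $\Omega_L$ associated with $\ell_0\,\cO L\,\ell_0$ is $\cF$-divisible by the already-proved principal-block case.

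Finally I would run the transfer: bifreeness of $\Omega$ is immediate from Green's Indecomposability Criterion (it is part of the standing setup that $D\times 1$ and $1\times D$ act freely, since $\ell$ restricts an interior $D$-algebra structure whose source is the trivial module), and for each $\phi \in \cI(D)$ one has $A(\phi) \neq 0 \iff \Omega^{\Delta(\phi)} \neq \emptyset$; combining with the analogous statement for $L$ and the identification of Brauer quotients $A(\phi) \cong (\ell_0\,\cO L\,\ell_0)(\phi)$ gives that $\Delta(\phi)$ fixes a point of $\Omega$ iff $\phi$ is an $\cF$-isomorphism. I expect the main obstacle to be step (i): pinning down precisely enough of the relationship between an almost-source algebra of a $p$-constrained block and an almost-source algebra of the principal block of a model $L$ so that the Brauer-quotient comparison $A(\phi) \cong (\ell_0\,\cO L\,\ell_0)(\phi)$ is legitimate for all $\phi \in \cI(D)$ simultaneously, rather than just up to the usual Morita-equivalence ambiguities. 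One clean way around this, if the direct comparison proves awkward, is to bypass $L$ entirely: use that for constrained $\cF$ the $\cF$-centric radical subgroups all contain $O_p(\cF)$, observe that $\Delta(\phi)$ fixes a point of $\Omega$ for $\phi = \mathrm{id}_P$ whenever $P$ is $\cF$-centric, and then propagate fixed points along the $\cF$-isomorphisms using that every $\cF$-isomorphism extends (after suitable conjugation) to one with $\cF$-centric domain and codomain --- the extension/restriction behavior of fixed points under composition of $(D,D)$-biset elements then does the rest, together with Theorem \ref{7.2} to get the easy direction of the "if and only if".
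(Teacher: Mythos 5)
Your primary route — reducing to the principal block of a model $L$ for $\cF$ — has a genuine gap. The claim that an almost-source $D$-algebra $A$ of $\cO G b$ is "Morita equivalent, and in fact closely related as an interior $D$-algebra" to an almost-source algebra of the principal block of a model is not established anywhere, and it is false in the form you need: the K\"ulshammer--Puig structure theorems for blocks with normal defect group (and their extensions to the constrained case) describe the source algebra in terms of a \emph{twisted} group algebra of a finite group attached to $\cF$, with a $2$-cocycle that is nontrivial in general. That twist is precisely the kind of obstruction that blocks an unqualified isomorphism of Brauer quotients $A(\phi) \cong (\ell_0 \cO L \ell_0)(\phi)$ "for all $\phi \in \cI(D)$ simultaneously," which you correctly flag as the delicate point. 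Without a concrete handle on the twist, the transfer step does not go through.

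Your fallback sketch at the end is much closer to the paper's actual argument, but it is both imprecise and more complicated than necessary. You do not need $\cF$-radical subgroups, you do not need to "propagate fixed points along $\cF$-isomorphisms," and you do not need to conjugate. The clean observation, which the paper uses, is the following. Set $R = O_p(\cF)$. Since $R$ is normal in $\cF$, we have $\cF = N_\cF(R)$, so every $\cF$-isomorphism $\phi : Q \rightarrow P$ extends directly (no conjugation) to an $\cF$-isomorphism $\psi : QR \rightarrow PR$. Constrainedness, $C_D(R) = Z(R)$, says exactly that $R$ is $\cF$-centric; any overgroup of an $\cF$-centric subgroup is $\cF$-centric, so $PR$ and $QR$ are $\cF$-centric, hence fully $\cF$-centralized. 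Now Theorem \ref{7.2}'s converse clause applies to $\psi$ and yields $\Omega^{\Delta(\psi)} \neq \emptyset$; since $\phi$ is a restriction of $\psi$, we have $\Omega^{\Delta(\phi)} \supseteq \Omega^{\Delta(\psi)} \neq \emptyset$. Combined with the forward direction of Theorem \ref{7.2} and the bifreeness of $\Omega$ (which, as you note, comes from Green's Indecomposability Criterion and the fact that $\Omega$ embeds as a $(D,D)$-sub-biset of $G$), this gives $\cF$-divisibility in a few lines, with no recourse to models or Morita equivalences.
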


\begin{proof}
We must show that, given $P, Q \leq D$
and an $\cF$-isomorphism $\phi : Q
\rightarrow P$, then $\Delta(\phi)$ fixes
an element of $\Omega$. Let
$R = O_p(\cF)$. Then $\phi$ extends
to an $\cF$-isomorphism $\psi : QR
\rightarrow PR$. Since $R$ is $\cF$-centric,
$PR$ and $QR$ are $\cF$-centric and
hence fully $\cF$-centralized. So
$\Omega^{\Delta(\psi)} \neq \emptyset$.
In particular, $\Omega^{\Delta(\phi)}
\neq \emptyset$.
\end{proof}

By \cite[5.95]{Cra11}, the group fusion
system of a $p$-solvable group is
solvable. Part of \cite[5.90]{Cra11} says
that all saturated subsystems of a
solvable fusion system are solvable.
So, if $G$ is $p$-solvable, then
$\cF$ is solvable. A theorem
of Aschbacher in \cite[5.91]{Cra11}
asserts that every solvable fusion system
is constrained. Therefore, Proposition
\ref{7.3} has the following corollary.

\begin{cor}
\label{7.4}
In the notation of Theorem \ref{7.2}, if
$G$ is $p$-solvable, then $\Omega$
is $\cF$-divisible.
\end{cor}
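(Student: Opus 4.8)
The plan is to obtain Corollary \ref{7.4} directly from Proposition \ref{7.3}: since that proposition already delivers the $\cF$-divisibility of $\Omega$ whenever the fusion system $\cF$ is $p$-constrained, and since $\cF$-divisibility of $\Omega$ depends only on $\cF$ and on the isomorphism class of the $(D,D)$-biset $\Omega$, everything reduces to showing that $p$-solvability of $G$ forces $\cF$ to be $p$-constrained, i.e.\ constrained in the sense of Craven \cite[3.69]{Cra11}.

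To verify this, I would first record that $\cF$ is a saturated fusion system on $D$ and that, by the description of $\cF$ recalled earlier in this section (every $\cF$-isomorphism, hence every $\cF$-morphism, is induced by conjugation by an element of $G$), $\cF$ occurs as a saturated subsystem of the group fusion system $\cF_S(G)$, where $S$ is any Sylow $p$-subgroup of $G$ containing $D$. Now assume $G$ is $p$-solvable. Then $\cF_S(G)$ is solvable by \cite[5.95]{Cra11}; by \cite[5.90]{Cra11}, every saturated subsystem of a solvable fusion system is solvable, so $\cF$ is solvable; and by Aschbacher's theorem \cite[5.91]{Cra11}, every solvable fusion system is constrained, so $\cF$ is constrained. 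Proposition \ref{7.3} then applies and gives that $\Omega$ is $\cF$-divisible.

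I do not expect a genuine obstacle: the argument is a concatenation of results already in the literature. The only points needing a sentence of care are the two ingredients used to invoke \cite[5.90]{Cra11} --- that the block fusion system $\cF$ is itself saturated, and that it is a subsystem of $\cF_S(G)$ in the technical sense intended there --- together with the terminological remark that Craven's ``constrained'' is precisely the hypothesis ``$p$-constrained'' of Proposition \ref{7.3}; all three are standard. If one preferred to avoid the theory of solvable fusion systems altogether, an alternative would be a Fong-style reduction modulo $O_{p'}(G)$ to the case $O_{p'}(G) = 1$, where $D \unlhd G$ and the constrainedness of $\cF$ is immediate; but the citation route above is shorter and is exactly what the preceding paragraph of the section is built to support.
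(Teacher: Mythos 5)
Your proposal is correct and follows exactly the paper's argument: the chain Craven 5.95 (group fusion systems of $p$-solvable groups are solvable), 5.90 (saturated subsystems of solvable fusion systems are solvable), and Aschbacher's 5.91 (solvable implies constrained), followed by an appeal to Proposition~\ref{7.3}. You even anticipated the same alternative route via reduction modulo $O_{p'}(G)$ that the paper mentions as an aside.
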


Alternatively, the corollary can be deduced
from Proposition \ref{7.3} by using
\cite[8.1.8]{Lin18} to reduce to the
case where $O_{p'}(G) = 1$, then applying
a theorem of Hall--Higman in
\cite[5.93]{Cra11}.

To present one more case where we
can guarantee divisibility, we need some
preliminaries. Consider a $p$-subgroup
$P$ of $G$. Recall, the condition
$\br_P(\gamma)V \neq 0$ characterizes
a bijective correspondence $\gamma
\leftrightarrow [V]$ between the local
points $\gamma$ of $P$ on $\cO G$
and the isomorphism classes $[V]$ of
simple $\FF C_G(P)$-modules $V$. Let
$\sigma_P^G$ denote the local point
of $P$ on $\cO G$ corresponding to
the trivial $\FF C_G(P)$-modules. The
next result, well-known, can be viewed
as a version of Brauer's Third Main
Theorem. A proof of it is implicit in
Linckelmann \cite[6.3.14]{Lin18}. For
convenience, we extract the argument.

\begin{thm}
\label{7.5}
Let $Q$ and $P$ be $p$-subgroups of
$G$. Then $Q \leq P$ if and only if
$Q_{\sigma_Q^G} \leq P_{\sigma_P^G}$.
\end{thm}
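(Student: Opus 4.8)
The plan is to prove both implications directly from the identification $(\cO G)(P) = \FF C_G(P)$ together with the characterization of local points in terms of simple $\FF C_G(P)$-modules. One direction is essentially formal: if $Q_{\sigma_Q^G} \leq P_{\sigma_P^G}$, then in particular $Q \leq P$ as groups, since the relation $\leq$ between pointed groups refines the subgroup relation. So the substance is the forward implication: assuming $Q \leq P$, we must produce the containment of pointed groups $Q_{\sigma_Q^G} \leq P_{\sigma_P^G}$.

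To establish $Q_{\sigma_Q^G} \leq P_{\sigma_P^G}$, recall that this means: there exists $i \in \sigma_P^G$ and $j \in \sigma_Q^G$ with $i \in (\cO G)^P$, $j \in (\cO G)^Q$, $ij = ji = j$ (i.e.\ $j \leq i$ after suitable conjugation). Equivalently, by the standard criterion for containment of pointed groups on $\cO G$, it suffices to show $\br_Q(\sigma_P^G) \neq 0$ and that the block/local-point data match up, i.e.\ that some (equivalently every) primitive idempotent $i \in \sigma_P^G$, viewed in $(\cO G)^Q$ via $(\cO G)^P \subseteq (\cO G)^Q$, has a primitive decomposition in $(\cO G)^Q$ that meets $\sigma_Q^G$. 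First I would take $i \in \sigma_P^G$; the image $\br_P(i) \in \FF C_G(P)$ is a primitive idempotent lying in the block of $\FF C_G(P)$ whose simple modules afford the local point $\sigma_P^G$ — by definition of $\sigma_P^G$, that block is the principal block of $\FF C_G(P)$, so $\br_P(i)$ acts as identity on the trivial module. Now restrict attention to $Q \leq P$: I would apply the Brauer map $\br_Q$ to $i$ and use the relationship between $\br_P$ and $\br_Q$ (the composite $\br_P$ factors, up to the relevant identifications, through $\br_Q$ followed by the Brauer map of $P/Q$ acting on $\FF C_G(Q)$ — more precisely one uses that $C_G(P) \leq C_G(Q)$ and the restriction of the trivial $\FF C_G(Q)$-module to $C_G(P)$ is trivial). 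The key point is that a primitive idempotent in the principal block of $\FF C_G(P)$ lifts/refines to primitive idempotents in $\FF C_G(Q)$ all of which lie in the principal block of $\FF C_G(Q)$, because the trivial module is their common target. Translating back through $\br_Q$, some primitive idempotent below $i$ in $(\cO G)^Q$ lies in $\sigma_Q^G$, which is exactly $Q_{\sigma_Q^G} \leq P_{\sigma_P^G}$.

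The main obstacle I anticipate is making the interplay between the two Brauer maps $\br_P$ and $\br_Q$ precise: one needs the compatibility statement that, for $Q \leq P$, the composite of $\br_Q : (\cO G)^P \to (\FF C_G(Q))^{P/Q}$ with the Brauer map $\br_{P/Q}$ of the group $P/Q$ acting on $\FF C_G(Q)$ agrees with $\br_P : (\cO G)^P \to \FF C_G(P)$, under the identification $(\FF C_G(Q))(P/Q) = \FF C_G(P)$. This is standard (it is the transitivity of Brauer quotients, in Linckelmann \cite[Section 5.4]{Lin18}), but it must be invoked carefully so that the principal block of $\FF C_G(P)$ is seen to sit inside the image of the principal block of $\FF C_G(Q)$. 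Once that compatibility is in hand, the argument is a short diagram chase on idempotents: everything reduces to the trivial fact that the principal block of a group algebra is the unique block not annihilating the trivial module, and that this property is inherited along the inclusion $C_G(P) \leq C_G(Q)$. I would present the proof in that order — formal direction first, then the forward direction via Brauer-quotient transitivity and the principal-block characterization.
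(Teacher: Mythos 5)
Your overall strategy matches the paper's: characterize $\sigma_P^G$ in terms of the trivial module and argue via Brauer quotients. But the step you call ``the key point'' is where the argument breaks. You assert that the primitive idempotents of $\FF C_G(Q)$ refining $\br_Q(i)$ ``all lie in the principal block of $\FF C_G(Q)$, because the trivial module is their common target.'' This is false in general, and it is also circular: taking $Q = 1$, the assertion becomes ``every $i \in \sigma_P^G$ lies in the principal block of $\FF G$,'' which is essentially Brauer's Third Main Theorem --- precisely the statement the paper \emph{recovers} from Theorem \ref{7.5} rather than one it may presuppose. The primitive summands of $\br_Q(i)$ in $\FF C_G(Q)$ need not all lie in one block; what is needed, and what is true, is only that at least one of them lies in $\br_Q(\sigma_Q^G)$.

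To get that one summand, the crux --- which your write-up never isolates --- is the elementary fact that the augmentation commutes with the Brauer construction: for $x \in (\FF G)^Q$ one has $\eta_{C_G(Q)}(\br_Q(x)) = \eta_G(x)$, because the coefficients of $x$ on $Q$-conjugation orbits outside $C_G(Q)$ come in families of size divisible by $p$. Applying this at both $P$ and $Q$ gives $\eta_{C_G(Q)}(\br_Q(i)) = \eta_G(i) = \eta_{C_G(P)}(\br_P(i)) = 1$, and that numerical equality is what forces some primitive summand of $\br_Q(i)$ to act as the identity on the trivial $\FF C_G(Q)$-module. Your Brauer-transitivity framing can be made to deliver the same thing --- one needs $\eta_{C_G(P)} \circ \br_{P/Q} = \eta_{C_G(Q)}$ on $(\FF C_G(Q))^{P/Q}$, which is the same commutativity applied to $\FF C_G(Q)$ with $P$ acting --- but transitivity alone, absent this compatibility of the augmentations, does not supply the needed conclusion, and your substitute claim about blocks does not either. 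Finally, the transition from $\eta_{C_G(Q)}(\br_Q(i)) = 1$ to the pointed-group containment is not just a diagram chase: the paper takes $j \in \sigma_Q^G$, notes $\eta_{C_G(Q)}(\br_Q(ij)) = 1$ by multiplicativity, deduces $ij \notin J((\FF G)^Q)$, and concludes that a conjugate of $j$ appears below $i$ in a primitive decomposition in $(\FF G)^Q$. That chain of implications needs to appear explicitly in your proof.
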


\begin{proof}
We may assume that $\cO = \FF$ and
$Q \leq P$. Let $\eta_G : \FF G \rightarrow
\FF$ be the augmentation map. Given
an idempotent $e$ of $\FF G$, then
$\eta_G(e) = 1$ if and only if $e$ acts as
the identity on a trivial $\FF G$-module.
That is equivalent to the condition that
$k \leq e$ for some $k \in \sigma_1^G$.
So, letting $i \in \sigma_P^G$ and
$j \in \sigma_Q^G$, we have
$\eta_{C_G(P)}(i) = 1 = \eta_{C_G(Q)}(j)$.
For all $x \in (\FF G)^P$, we have
$\eta_{C_G(P)}(\br_P(x)) = \eta_G(x)$ and
similarly with $Q$ in place of $P$. Hence
$$\eta_{C_G(Q)}(\br_Q(i)) = \eta_G(i)
  = \eta_{C_G(P)}(\br_P(i)) \; .$$
Therefore, $\eta_{C_G(Q)}(\br_Q(ij))
= \eta_{C_G(Q)}(\br_Q(i)) .
\eta_{C_G(Q)}(\br_Q(j)) = 1$. So
$ij \not\in J((\FF G)^Q)$. It follows that
$i \geq j'$ for some conjugate $j'$ of
$j$ in $(\FF G)^Q$.
\end{proof}

Note that, from Theorem \ref{7.5}, we
immediately recover the version of
Brauer's Third Main Theorem that is
explicit stated in \cite[6.3.14]{Lin18}:
if $b$ is the principal block of $\cO G$,
then $b_P$ is the principal block of
$\FF C_G(P)$ for each $P \leq D$. In
particular, if $b$ is the principal block,
then $D$ is a Sylow $p$-subgroup of
$G$ and $\cF = \cF_D(G)$, the fusion
system of $G$ on $D$.

\begin{thm}
\label{7.6}
In the notation of Theorem \ref{7.2},
if $b$ is the principal block of $\cO G$,
then $\Omega$ is $\cF$-divisible.
\end{thm}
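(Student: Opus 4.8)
The plan is to leverage the two structural facts recorded just before the statement. First, by (the consequence of) Theorem \ref{7.5}, when $b$ is the principal block, $D$ is a Sylow $p$-subgroup of $G$ and $\cF = \cF_D(G)$. Second, Remark \ref{7.1} tells us that $\Omega$ is isomorphic to its opposite biset, so $|\Omega^{\Delta(\phi)}| = |\Omega^{\Delta(\phi^{-1})}|$, which lets us freely swap the roles of domain and codomain. The strategy: we must show that for $P, Q \le D$ and an $\cF$-isomorphism $\phi : Q \to P$, the subgroup $\Delta(\phi)$ fixes an element of $\Omega$, i.e. $\Omega^{\Delta(\phi)} \ne \emptyset$; equivalently, via Theorem \ref{7.2}, it suffices to reduce to the case where $\dom(\phi)$ or $\cod(\phi)$ is fully $\cF$-centralized. (Bifreeness is automatic for a stable basis of an almost-source algebra, by Green's Indecomposability Criterion.)

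First I would recall that in a \emph{saturated} fusion system — and $\cF_D(G)$ is saturated since $D \in \mathrm{Syl}_p(G)$ — every subgroup $Q$ is $\cF$-isomorphic to a fully centralized (indeed fully normalized) subgroup, and moreover any $\cF$-isomorphism $\phi : Q \to P$ with $P$ fully centralized extends to an $\cF$-morphism on $N_\phi \supseteq C_D(Q) \cdot Q$; in particular it extends to a fully-centralized target. So, given $\phi : Q \to P$, choose $R \le D$ fully $\cF$-centralized with an $\cF$-isomorphism $\rho : P \to R$. Then $\rho \phi : Q \to R$ has fully centralized codomain, so by Theorem \ref{7.2} we get $\Omega^{\Delta(\rho\phi)} \ne \emptyset$, and likewise $\Omega^{\Delta(\rho)} \ne \emptyset$. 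The remaining task is to \emph{descend}: from $\Omega^{\Delta(\rho\phi)} \ne \emptyset$ and $\Omega^{\Delta(\rho)} \ne \emptyset$, deduce $\Omega^{\Delta(\phi)} \ne \emptyset$. This is exactly a composition/cancellation statement for the localized multiplication of Section \ref{4}: if $A(\rho\phi) \ne 0$ and $A(\rho) \ne 0$ then $A(\phi) \ne 0$, since $\phi = \rho^{-1}(\rho\phi)$ and one can multiply a nonzero element of $A(\rho^{-1})$ (nonzero because $A(\rho) \ne 0$, as $\br_R(1)*\br_{\rho^{-1}}(u)*\br_P(1)$ considerations show $A(\rho) = 0 \Leftrightarrow A(\rho^{-1}) = 0$) against one of $A(\rho\phi)$. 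The subtlety is that $*$ need not preserve nonzero-ness in general, so I would instead argue at the level of the biset: $\Omega^{\Delta(\rho\phi)} \ne\emptyset$ means there is $w \in \Omega$ with $N_{D\times D}(w) \supseteq \Delta(\rho\phi)$; since $\Omega$ is bifree, $N_{D\times D}(w) = \Delta(\sigma)$ for a unique $\sigma \in \cI(D)$ extending $\rho\phi$, hence $\sigma$ is an $\cF$-isomorphism (again by Theorem \ref{7.2}, as its source/target are forced to be fully centralized after the reduction, or directly because $\cF = \cF_D(G)$ is realized and $w$ maps into $G$). Then $\rho^{-1}\sigma|$ is an $\cF$-isomorphism extending $\phi$ whose graph-fixed-point can be traced back through an element of $\Omega$.

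The cleaner route, which I would ultimately write up, is to dispense with Theorem \ref{7.2} on the target side and argue directly from $\cF = \cF_D(G)$: the stable basis $\Omega$ of the principal-block almost-source algebra $A = \ell \cO G \ell$ is, by construction, isomorphic as a $(D,D)$-biset to a $(D,D)$-stable subset of $G$ (viewing $A \subseteq \cO G$), and for $g \in G$ the stabilizer $N_{D \times D}(g) = \{(u,v) : ug = gv\} = \Delta(\phi_g)$ where $\phi_g : y \mapsto {}^{g^{-1}}y$ on $Q = {}^g D \cap D$ — an honest $\cF_D(G)$-isomorphism. So every $w \in \Omega$ contributes $\Delta(\phi)$ for some $\cF$-isomorphism $\phi$, giving one direction; for the converse, an arbitrary $\cF$-isomorphism $\psi : Q \to P$ is conjugation by some $g \in G$, and one uses that $\ell$ lies in the principal block together with the orbit-counting identity $|\Omega^{\Delta(\psi)}|$ = multiplicity of the corresponding $(D,D)$-biset summand, which is nonzero precisely because $\ell \cO G \ell$ contains the relevant double coset — concretely, because $\br_D(\ell) \ne 0$ forces $\ell$ to ``see'' $D$, and principality forces $b_P$ principal for all $P$, so no double coset of the needed fusion type is killed. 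The main obstacle I anticipate is pinning down this last nonvanishing cleanly: showing that for a principal-block almost-source algebra the stable basis genuinely contains a representative of \emph{every} fusion-realizing double coset $D g D$ with $\phi_g \in \cF$, not merely those with fully-centralized endpoints. I expect the resolution is to combine the extension property in the saturated system $\cF_D(G)$ (to push any $\phi$ up to one with fully centralized codomain, where Theorem \ref{7.2} applies) with Remark \ref{7.1}'s domain/codomain symmetry and a descent lemma on localized multiplications — so the write-up is: (i) reduce to fully centralized codomain via saturation; (ii) apply Theorem \ref{7.2} there; (iii) descend using bifreeness and the associativity of $*$ from Section \ref{4}.
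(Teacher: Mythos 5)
Your ultimate write-up plan --- (i) reduce to a fully $\cF$-centralized codomain via saturation, (ii) apply Theorem \ref{7.2} there, (iii) ``descend'' using bifreeness and the associativity of $*$ --- has a genuine gap in step (iii), and it is not a gap that can be patched. The alleged descent lemma, that $A(\rho\phi)\neq 0$ and $A(\rho)\neq 0$ together force $A(\phi)\neq 0$, is not true as a general statement about localized multiplications, and your attempted biset-level substitute is circular: if $w\in\Omega$ has $N_{D\times D}(w)=\Delta(\sigma)$ with $\sigma$ extending $\rho\phi$, then $\rho^{-1}\sigma$ restricted to $Q$ is just $\phi$ again, and you have learned nothing new about $\Omega^{\Delta(\phi)}$. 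More decisively, nothing in steps (i)--(iii) uses that $b$ is the \emph{principal} block. If that chain of reasoning were sound it would prove $\cF$-divisibility of $\Omega$ for every block $b$, which would resolve a piece of Conjecture \ref{1.3} that the paper explicitly leaves open; Theorem \ref{1.8} is stated only for principal, $p$-constrained, and $p$-solvable cases precisely because the general statement is not known. So the hypothesis must enter the argument somewhere, and your plan has no slot for it.

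The paper's proof inserts the hypothesis through Theorem \ref{7.5}, the local-pointed-group form of Brauer's Third Main Theorem. The trivial local points $\sigma_P^G$ of the $p$-subgroups $P$ on $\cO G$ form a coherent chain: $Q\leq P\leq D$ implies $Q_{\sigma_Q^G}\leq P_{\sigma_P^G}\leq D_{\sigma_D^G}$, and any $\cF$-isomorphism $\phi:Q\to P$, being conjugation by some $g\in G$, is automatically an isofusion $Q_{\sigma_Q^G}\to P_{\sigma_P^G}$. Principality is used to guarantee $A\cap\sigma_D^G\neq\emptyset$, and then $\sigma_P^A=A\cap\sigma_P^G$ is a local point of $P$ on $A$ for every $P\leq D$. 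The isofusion $\phi$ restricts to an isofusion between these local points on $A$, and Lemma \ref{4.5} then yields $A(\phi)\neq 0$ and $A(\phi^{-1})\neq 0$ outright, which is exactly $\Omega^{\Delta(\phi)}\neq\emptyset$. Theorem \ref{7.2} and saturation play no role. Your second, sketchier route (arguing that principality forces $b_P$ to be principal for all $P$ so ``no double coset of the needed fusion type is killed'') is pointing in the right direction --- you correctly identify that the nonvanishing of the relevant fixed-point set is where principality must bite --- but it stops exactly at the place where the paper's argument supplies the mechanism, namely the $\sigma_P^G$'s and Lemma \ref{4.5}.
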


\begin{proof}
Given $P, Q \leq D$ and $g \in G$ such
that $P = {}^g Q$, then $P_{\sigma_P^G}
= {}^g (Q_{\sigma_Q^G})$. So any
$\cF$-isomorphism $\phi :
Q \rightarrow P$ is an isofusion
$Q_{\sigma_Q^G} \rightarrow
P_{\sigma_P^G}$. By Theorem \ref{7.5},
$P_{\sigma_P^G} \leq D_{\sigma_D^G}$.
But $A \cap \sigma_D^G \neq
\emptyset$. So we can define
$\sigma_P^A = A \cap \sigma_P^G$
as a local point of $P$ on $A$, likewise
$\sigma_Q^A$. Since $\phi$ is an
isofusion $Q_{\sigma_Q^A} \rightarrow
P_{\sigma_P^A}$, Lemma \ref{4.5}
implies that $A(\phi)$ and
$A(\phi^{-1})$ are non-zero.
\end{proof}

The proof of Theorem \ref{1.8} is
complete.

\section{Uniformity of some
almost-source algebras}
\label{8}

Again, we let $D$ be a defect group of
a block $b$ of $\cO G$. We shall prove
Theorems \ref{1.2}, \ref{1.4}, \ref{1.5} and
Proposition \ref{1.7}.

First, though, we are now ready to note
that Conjecture \ref{1.3} implies
another characterization of the fusion
system of an almost-source algebra.

\begin{pro}
\label{8.1}
Let $\cF$ be the fusion system of a
uniform almost-source $D$-algebra
$A$ of $\cO G b$. Then $\cF =
\cF^{\rm uni}(A)$.
\end{pro}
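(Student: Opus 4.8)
The statement to prove is Proposition~\ref{8.1}: if $A$ is a uniform almost-source $D$-algebra of $\cO G b$ with associated fusion system $\cF$, then $\cF = \cF^{\rm uni}(A)$. Both sides are fusion systems on the same $p$-group $D$, and since a fusion system is determined by its isomorphisms between subgroups (and every morphism factors as an isomorphism followed by an inclusion), it suffices to show that for all $P, Q \leq D$, a group isomorphism $\phi : Q \rightarrow P$ is an $\cF$-isomorphism if and only if it is $A$-uniform. Recall that $\phi$ is $A$-uniform iff $A^\times \cap A^\phi \neq \emptyset$ (Theorem~\ref{5.2}(c)), equivalently iff every isomorphism restricted from $\phi$ is locally $A$-uniform, equivalently (Theorem~\ref{5.1}) iff the localized multiplications $A(\psi) \times A(\psi^{-1}) \rightarrow A(\cod \psi)$ and its partner are surjective for each restriction $\psi$ of $\phi$.

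\textbf{Forward direction: $A$-uniform $\Rightarrow$ $\cF$-isomorphism.} Suppose $\phi : Q \rightarrow P$ is $A$-uniform. Pick $u \in A^\times \cap A^\phi$ and let $\Omega$ be any $D \times D$-stable basis for $A$. By Theorem~\ref{1.8} (whose hypotheses we must arrange — see the obstacle below), or more directly by noting that $A^\phi \neq 0$ forces $\Omega^{\Delta(\phi)} \neq \emptyset$, we get from Theorem~\ref{7.2} (Linckelmann) that $\phi$ is an $\cF$-isomorphism: indeed $A^\phi \ni u \neq 0$ implies $A(\phi) \neq 0$ is false in general, so the cleaner route is: $A^\phi \neq 0$ means $\Omega$ has a $\Delta(\phi)$-fixed element (since $\Omega$ spans $A$ as a permutation module and a nonzero fixed vector forces a fixed basis element when $\Omega$ is a permutation basis), and then Theorem~\ref{7.2}'s first (unconditional) implication gives that $\phi$ is an $\cF$-isomorphism. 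Actually the slickest argument: $A$-uniform gives $A(\phi) \neq 0$ (via Theorem~\ref{5.1}(b), $1_{A(P)} = \oo u * \oo v$ with $\oo u \in A(\phi)$, so $A(\phi) \neq 0$ provided $A(P) \neq 0$; and $A(P) \neq 0$ because $P \leq D$ and $b$ has defect group $D$), hence $\Omega^{\Delta(\phi)} \neq \emptyset$, hence by Theorem~\ref{7.2} $\phi$ is an $\cF$-isomorphism.

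\textbf{Reverse direction: $\cF$-isomorphism $\Rightarrow$ $A$-uniform.} This is where uniformity of $A$ is used essentially. Let $\phi : Q \rightarrow P$ be an $\cF$-isomorphism. By Theorem~\ref{5.2}, it suffices to check condition (b) there, which is exactly the content of Theorem~\ref{1.2}(b) specialized to the pair $(P, Q)$: since $A$ is uniform, for every $\cF$-isomorphism there is a bijective correspondence between the points $\alpha$ of $P$ on $A$ and the points $\beta$ of $Q$ on $A$ with $\alpha \leftrightarrow \beta$ iff $\phi$ is an isofusion $Q_\beta \rightarrow P_\alpha$, and with matching multiplicities. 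That is precisely Theorem~\ref{5.2}(b), so $\phi$ is $A$-uniform. Hence every $\cF$-isomorphism is an $\cF^{\rm uni}(A)$-isomorphism, and combined with the forward direction, $\cF$ and $\cF^{\rm uni}(A)$ have the same isomorphisms between subgroups of $D$, so they coincide.

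\textbf{Main obstacle.} The delicate point is logical bookkeeping rather than hard new mathematics: condition (a)/(b) of Theorem~\ref{1.2} is phrased for \emph{all} $\cF$-isomorphisms (equivalently all $\cF$-morphisms between the relevant objects), so one must be sure that ``uniform'' as defined via (a)--(c) of Theorem~\ref{1.2} really does feed Theorem~\ref{5.2}(b) for each pair $(P,Q)$ — but this is immediate since (1.2)(b) restricted to a single $\phi$ is verbatim (5.2)(b) for that $\phi$. The other subtlety is the forward direction's use of $A(P) \neq 0$ for all $P \leq D$: this holds because $D$ is a defect group, so $\br_D(\ell) \neq 0$ and more generally $\br_P(\ell) \neq 0$ for $P \leq D$ (this is built into the definition of almost-source algebra), whence $A(P) = \br_P(\ell)\,\FF C_G(P)\,\br_P(\ell) \neq 0$. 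Once these two points are nailed down, the proof is a two-line citation of Theorems~\ref{5.1}, \ref{5.2}, \ref{7.2}, and~\ref{1.2}.
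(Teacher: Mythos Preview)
Your proposal is correct. The forward direction ($A$-uniform $\Rightarrow$ $\cF$-isomorphism) is handled essentially as in the paper, via Theorem~\ref{7.2}: from $u \in A^\times \cap A^\phi$ and $A(P) \neq 0$ you extract $A(\phi) \neq 0$, hence a $\Delta(\phi)$-fixed basis element, hence an $\cF$-isomorphism. Your justification that $A(P) \neq 0$ for all $P \leq D$ is the right one.

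The reverse direction is where you genuinely diverge from the paper. The paper invokes Alperin's Fusion Theorem: it writes an arbitrary $\cF$-isomorphism $\phi$ as a composite $\phi_1 \cdots \phi_n$ of restrictions of $\cF$-automorphisms $\psi_t$ of $\cF$-centric subgroups; since centric subgroups are fully $\cF$-centralized, Theorem~\ref{7.2} gives $\Omega^{\Delta(\psi_t)} \neq \emptyset$, and the \emph{unital} basis (condition (c) of uniformity) then furnishes units $r_t \in A^\times \cap A^{\psi_t}$, whose product lies in $A^\times \cap A^\phi$. You instead feed condition (b) of Theorem~\ref{1.2}, restricted to the single isomorphism $\phi$, directly into Theorem~\ref{5.2}(b), and read off $A^\times \cap A^\phi \neq \emptyset$ from the equivalence there. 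Your route is shorter and avoids Alperin altogether; the paper's route has the virtue of showing that condition (c) alone---the existence of a unital stable basis---already forces $\cF \subseteq \cF^{\rm uni}(A)$, without passing through the point-counting condition (b), which fits the paper's emphasis on stable bases.
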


\begin{proof}
We need only show that $\cF$ and
$\cF^{\rm uni}(A)$ have the same
isomorphisms. By Theorem \ref{7.2},
every $\cF^{\rm uni}(A)$-isomorphism
is an $\cF$-isomorphism. For the
converse, consider an $\cF$-isomorphism
$\phi$. Since $\cF$ is a saturated fusion
system, Alperin's Fusion Theorem
guarantees that $\phi = \phi_1 ...
\phi_n$ where $\phi_1$, $...$, $\phi_n$,
respectively, are isomorphisms restricted
from $\cF$-automorphisms $\psi_1$, ...,
$\psi_n$ of $\cF$-centric subgroups
of $D$. Letting $r_1 \in A^\times \cap
A^{\psi_1}$, $...$, $r_n \in A^\times
\cap A^{\psi_n}$, then $r_1 ... r_n \in
A^\times \cap A^\phi$. So $\phi$ is
an $\cF^{\rm uni}(A)$-isomorphism.
\end{proof}

Via the proposition, Theorem \ref{1.1}
is now recovered as a special case of
Theorem \ref{6.4}.

Let us prove Theorem \ref{1.2}. Since
the $\cF$-isomorphisms are closed
under restriction to group isomorphisms,
(a) and (b) are equivalent and they can
be expressed as:

\smallskip
\noin {\bf (y)} Every $\cF$-isomorphism
is uniform.

\smallskip
On the other hand, by Theorem
\ref{6.1}, condition (c) can be
reformulated as:

\smallskip
\noin {\bf (z)} Given $\phi \in \cI(D)$
such that $A(\phi) \neq 0$, then
$\phi$ is uniform.

\smallskip
\noin Theorem \ref{7.2} immediately shows
that (y) implies (z). Conversely, assume (z),
and let $\phi$ be an $\cF$-isomorphism.
Invoking Alperin's Fusion Theorem, write
$\phi = \phi_1 ... \phi_m$ where each
$\phi_t$ is an $\cF$-isomorphism restricted
from an $\cF$-automorphism $\psi_t$ of an
$\cF$-centric subgroup $P_t \leq D$. Each
$P_t$ is fully $\cF$-centralized. So, by
Theorem \ref{7.2} again, $A(\psi_t) \neq 0$.
By the assumption, $\psi_t$ is uniform.
Hence, each $\phi_t$ is uniform and
$\phi$ is uniform. We have deduced (y).
Theorem \ref{1.2} is now proved.

To deal with Theorem \ref{1.5}, just a few
words of explanation are needed. We now
suppose that $A$ is a souce $D$-algebra.
The point $\lambda_D$ in the statement
of Theorem \ref{1.5} is the point of $D$ on
$\cO G$ owning the unity element $\ell$
of $A$. For $P \leq D$, the condition
$\beta' = A \cap \beta$ characterizes a
bijective correspondence between the
points $\beta'$ of $P$ on $A$ and the
points $\beta$ of $P$ on $\cO G$
satisfying $A \cap \beta \neq \emptyset$.
We have $m_A(P_{\beta'}) =
m_{\cO G}(P_\beta, D_{\lambda_D})$.
Of course, $\beta'$ is local if and only
if $\beta$ is local. It is now clear that
Theorem \ref{1.5} follows as a special
case of Theorem \ref{1.2}.

Proposition \ref{1.7} can be dispatched
quickly too. Still supposing that $A$ is
a source algebra of $\cO G b$, now
supposing also that $\cF = N_\cF(D)$,
let $\phi$ be an $\cF$-isomorphism. Let
$\psi$ be an extension of $\phi$ to an
$\cF$-automorphism of $D$. Let
$r \in A^\times$ such that $(r, r^{-1})$
is a $\psi$-witness $\ell \mapsto \ell$.
Then $r \in A^{\Delta(\psi)} \subseteq
A^{\Delta(\phi)}$. So, by Theorem
\ref{2.4}, $A$ has a unital
$D {\times} D$-stable basis. The proof
of Proposition \ref{1.7} is complete.

To fulfill the obligations taken on in Section
\ref{1}, it remains only to prove Theorem
\ref{1.4}. Some preparation is needed.
Recall, $b$ is said to be of {\bf principal
type} provided $\br_P(b)$ is a block of
$\FF C_G(P)$ for all $P \leq D$. That
is equivalent to the condition that the
restriction ${}_D \Res {}_G (\cO G b)$
is an almost-source $D$-algebra of
$\cO G b$. Corollary \ref{2.5} immediately
yields the next result.

\begin{pro}
\label{8.2}
If $b$ is of principal type then the
interior $D$-algebra ${}_D \Res {}_G
(\cO G b)$ is a uniform almost-source
$D$-algebra of $\cO G b$.
\end{pro}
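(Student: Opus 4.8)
The plan is to verify condition (c) of Theorem \ref{1.2} for the interior $D$-algebra $A := {}_D \Res {}_G (\cO G b)$, since a \emph{uniform} almost-source $D$-algebra is, by definition, an almost-source $D$-algebra satisfying the equivalent conditions (a), (b), (c) of that theorem. The hypothesis that $b$ is of principal type is, as recalled immediately before the statement, exactly the assertion that $A$ \emph{is} an almost-source $D$-algebra of $\cO G b$; so the only remaining task is to produce a unital $D \times D$-stable basis for $A$.

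To do this I would descend from a Sylow subgroup to $D$. Choose a Sylow $p$-subgroup $S$ of $G$ containing $D$ — possible because the defect group $D$ is a $p$-subgroup of $G$. By Corollary \ref{2.5}, applied to the central idempotent $e = b$, the algebra $\cO G b$ has a unital $S \times S$-stable basis $\Omega$. The $D \times D$-module structure on $A$ is the restriction to $D \times D \leq S \times S$ of the action of $S \times S$ on $\cO G b$ by left and right multiplication, so $\Omega$ is also $D \times D$-stable; and being \emph{unital} is a property of $\Omega$ as a subset of the algebra $\cO G b$, hence is unaffected by the change of acting group. Therefore $\Omega$ is a unital $D \times D$-stable basis for $A$, condition (c) of Theorem \ref{1.2} holds, and $A$ is a uniform almost-source $D$-algebra of $\cO G b$.

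There is no genuine obstacle here; all of the substance is packaged in Corollary \ref{2.5}, which in turn rests on Theorem \ref{2.4}. The only points that merit a word of care are that the Sylow subgroup must be chosen to \emph{contain} $D$ rather than chosen arbitrarily, and the elementary observation that a stable basis for the larger biset action restricts to a stable basis for the smaller one on the same underlying set, so that unitality is preserved under the restriction.
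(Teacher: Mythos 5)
Your proof is correct, and it follows essentially the same route as the paper, which simply says that Corollary~\ref{2.5} immediately yields the result. The one thing you do that the paper leaves tacit is the bridge from the Sylow subgroup $S$ appearing in Corollary~\ref{2.5} to the defect group $D$ appearing in Proposition~\ref{8.2}: you choose $S \geq D$ and observe that a unital $S \times S$-stable basis is a fortiori a unital $D \times D$-stable basis. That observation is exactly right and is worth making explicit, since as stated Corollary~\ref{2.5} speaks only of Sylow subgroups. An alternative, and perhaps the reading the author had in mind, is to notice that the \emph{proof} of Corollary~\ref{2.5} nowhere uses Sylowness — it works verbatim for any $p$-subgroup — so one could simply run that argument with $S$ replaced by $D$. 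Either bridge is correct; yours has the small advantage of invoking the corollary exactly as stated rather than silently generalizing it.
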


The following theorem is part of
Harris--Linckelmann \cite[3.1]{HL00}.

\begin{thm}
\label{8.3}
{\rm (Harris--Linckelman.)}
Suppose $G$ is $p$-solvable. Then
there exists a subgroup $H \leq G$
and a block $c$ of $\cO H$ such that:

\noin {\bf (1)} $D$ is a defect group of
$c$ and a Sylow $p$-subgroup of $H$.

\noin {\bf (2)} $c$ is of principal type.

\noin {\bf (3)} We have $\cO G b =
\bigoplus_{fH, gH \subseteq G}
f \cO H c g^{-1}$ as $\cO$-modules,
$\cO G b \cong {}_G \Ind {}_H (\cO H c)$
as interior $G$-algebras and
$b = \tr_F^G(c)$.
\end{thm}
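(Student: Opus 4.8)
This is Harris--Linckelmann \cite[3.1]{HL00}; I sketch the argument. The plan is to induct on $|G|$, the theorem being the interior-algebra refinement, for $p$-solvable $G$, of Fong's two classical block reductions (Fong--Reynolds and Fong's first reduction); the work lies in carrying the interior-algebra structure, and the conditions (1)--(3), through those reductions.

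\emph{Base case}, $O_{p'}(G) = 1$. I claim $\cO G$ then has a unique block, necessarily the principal one, whose defect group $D$ is a Sylow $p$-subgroup of $G$. Granting this, take $H = G$, $c = b$, $F = G$, so that (1) and (3) are trivial and (2) holds because principal blocks are always of principal type (Brauer's Third Main Theorem; cf.\ the discussion following Theorem \ref{7.5}). For the claim: a standard property of the generalized Fitting subgroup applied to the $p$-solvable group $G$ with $O_{p'}(G) = 1$ gives $C_G(V) \leq V$, where $V := O_p(G)$, so $C_G(V)$ is a $p$-group and $\FF C_G(V)$ is a local ring. Since $O_p(G)$ lies in every defect group of every block, each block $e$ of $\cO G$ satisfies $\br_V(e) \neq 0$; as $\br_V : (\cO G)^V \rightarrow \FF C_G(V)$ is a ring homomorphism and distinct blocks are orthogonal, two distinct blocks would have orthogonal nonzero idempotent images in the local ring $\FF C_G(V)$, which is impossible.

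\emph{Inductive step}, $N := O_{p'}(G) \neq 1$. Fix a block $e$ of $\cO N$ covered by $b$, and let $T = G_e$ be its inertial subgroup. The interior-algebra form of Fong--Reynolds supplies a block $\tilde b$ of $\cO T$ covering $e$, with common defect group $D$, with $b = \tr_T^G(\tilde b)$, and with an isomorphism of interior $G$-algebras $\cO G b \cong {}_G\Ind{}_T(\cO T \tilde b)$. If $T < G$, apply the inductive hypothesis to $(T, \tilde b)$ to obtain $H \leq F' \leq T$ and a block $c$ of $\cO H$ satisfying (1)--(3) relative to $T$; then (1)--(3) hold relative to $G$ with the same $H$, $c$ and with $F := F'$, using transitivity of induction of interior algebras and of the relative trace, so that $b = \tr_T^G\bigl(\tr_{F'}^T(c)\bigr) = \tr_{F'}^G(c)$. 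If $T = G$, then $e$ is $G$-stable, $\cO N e \cong \End_\cO(L)$ for the unique simple $\cO N e$-module $L$ (as $N$ is a $p'$-group and $\FF$ is algebraically closed), and Fong's first reduction relates $\cO G b$ to $\End_\cO(L) \otimes_\cO \cO\hh{G}\hh{e}$ for a central $p'$-extension $\hh{G}$ of $G/N$ and a block $\hh{e}$ of $\cO\hh{G}$ with defect group isomorphic to $D$; iterating this strips off the central $p'$-part and produces, in finitely many steps, a block with defect group $D$ of the group algebra of $\oo{G} := G/N$, a group of strictly smaller order with $O_{p'}(\oo{G}) = 1$, to which the base case applies. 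Applying the inductive hypothesis to $\oo{G}$, pulling the resulting subgroup back along $G \twoheadrightarrow G/N$, and applying Fong's first reduction once more inside the preimage, one obtains $H \leq F \leq G$ and a block $c$ of $\cO H$; the isomorphism $\cO G b \cong {}_G\Ind{}_H(\cO H c)$ and the identity $b = \tr_F^G(c)$ persist because induction of interior algebras commutes both with tensoring by a matrix algebra and with the central-$p'$-extension construction.

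The substantive part, and the main obstacle, is this bookkeeping in the inductive step: one must upgrade the Morita equivalences underlying Fong--Reynolds and Fong's first reduction to isomorphisms of interior algebras (essentially due to Külshammer), and check that being of principal type, having $D$ as a Sylow $p$-subgroup of $H$, the induced-algebra isomorphism, and the relative-trace identity all survive simultaneously through the two reductions and through the passage between $G$ and the central $p'$-extension of $G/N$. Full details are in \cite{HL00}.
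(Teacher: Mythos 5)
The paper does not prove Theorem~\ref{8.3}; it cites it verbatim to Harris--Linckelmann \cite[3.1]{HL00}, so there is no in-paper argument to compare against. Your reconstruction does follow the actual strategy of \cite{HL00}: induction on $|G|$, with the base case $O_{p'}(G) = 1$ handled via the Hall--Higman lemma ($C_G(O_p(G)) \leq O_p(G)$) and the resulting locality of $\FF C_G(O_p(G))$, and the inductive step split between Fong--Reynolds (covered block of $\cO O_{p'}(G)$ not $G$-stable) and K\"ulshammer's interior-algebra form of Fong's first reduction ($G$-stable case). The base case and the Fong--Reynolds branch are correct and essentially complete at the level of a sketch.

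In the K\"ulshammer branch you compress the step that carries the real weight: the inductive hypothesis is applied to the pair $(\hh{G}, \hh{e})$ on the central $p'$-extension, not to $G/N$ directly, and one must both verify $|\hh{G}| < |G|$ and transport conditions (1)--(3) across the Morita bimodule $\End_\cO(L)$ and back through the quotient map $G \twoheadrightarrow G/N$; your phrasing (``iterating \dots produces \dots a block of the group algebra of $G/N$'') blurs this, since $\hh{G}$ is generally not $G/N$. You do explicitly flag this bookkeeping as the substantive part and defer to \cite{HL00}, which is appropriate for a sketch but is exactly where a full proof would have to do the work. One last remark: the symbol $F$ in part (3) of the theorem as stated in the paper is never introduced (almost certainly a typo for a specific subgroup in HL's formulation); your device of threading $F := F'$ through the trace-transitivity identity $b = \tr_T^G(\tr_{F'}^T(c)) = \tr_{F'}^G(c)$ is a sensible way to make the statement self-consistent.
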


We shall also need the following
sufficient criterion for an idempotent
of $(\cO G b)^D$ to be an
almost-source idempotent.

\begin{lem}
\label{8.4}
Let $b$ be a block of $\cO G$ with
defect group $D$. Let $c$ be an
idempotent of $(\cO G b)^D$ such
that $\br_D(c) \neq 0$. Define
$A = c \cO G c$ as an interior
$D$-subalgebra of $\cO G b$. Suppose
that, for all $P \leq D$, the Brauer
quotient $A(P) \cong \br_P(c)
\FF C_G(P) \br_P(c)$ has a unique
block. Then $A$ is an almost source
algebra of $\cO G b$.
\end{lem}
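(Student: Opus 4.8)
The statement to prove is Lemma \ref{8.4}: if $c \in (\cO G b)^D$ is an idempotent with $\br_D(c) \neq 0$ and every Brauer quotient $A(P) \cong \br_P(c)\FF C_G(P)\br_P(c)$ (for $P \leq D$) has a unique block, then $A = c\cO G c$ is an almost-source algebra of $\cO G b$. Recall the definition from Section \ref{7}: we need to produce, for every $P \leq D$, a block $b_P$ of $\FF C_G(P)$ such that $\br_P(c) \leq b_P$, together with the compatibility $(P, b_P) \leq (D, b_D)$ of Brauer pairs. The point is that the ``unique block'' hypothesis on $A(P)$ forces $\br_P(c)$ to lie under a \emph{single} block of $\FF C_G(P)$, which is exactly what it means to be an almost-source idempotent. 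So the proof is essentially an unwinding of definitions, and the only real content is checking the Brauer-pair inclusions.

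First I would fix $P \leq D$ and analyze $A(P)$. Using the standard identification $(\cO G)(P) = \FF C_G(P)$, one has $A(P) = (c\cO G c)(P) = \br_P(c)\,\FF C_G(P)\,\br_P(c)$; this is the unital subalgebra of $\FF C_G(P)$ cut out by the idempotent $\br_P(c)$, and its unity element is $\br_P(c)$. A central idempotent of $A(P)$ is precisely $\br_P(c) f$ where $f$ ranges over those block idempotents of $\FF C_G(P)$ satisfying $\br_P(c) f \neq 0$; equivalently, the block decomposition of $A(P)$ is the refinement of $1_{A(P)} = \br_P(c) = \sum_f \br_P(c) f$ over such $f$. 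The hypothesis that $A(P)$ has a unique block therefore says exactly that there is one and only one block idempotent $b_P$ of $\FF C_G(P)$ with $\br_P(c)\, b_P \neq 0$; consequently $\br_P(b_P) = \br_P(c) b_P = \br_P(c)$, i.e. $\br_P(c) \leq b_P$ as idempotents. Taking $P = D$ gives $b_D$ with $\br_D(c) \leq b_D$ and $\br_D(c) \neq 0$, so $b_D$ is well-defined and nonzero; moreover $b_D$ must be a block of $\FF C_G(D)$ with $(D, b_D)$ a Brauer pair belonging to $b$ (since $\br_D(c) \leq \br_D(b) $, as $c \in (\cO G b)^D$ forces $\br_D(c) = \br_D(bc) \leq \br_D(b)$).

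Next I would verify the containment of Brauer pairs $(P, b_P) \leq (D, b_D)$ for each $P \leq D$. The natural route is to show $(P, b_P) \leq (D, b_D)$ by exhibiting the chain through intermediate subgroups, or directly: since $\br_D(c) \neq 0$ and $c$ is $D$-fixed, for $P \leq D$ the Brauer homomorphism factors, and one has (by the general theory of Brauer pairs, e.g. \cite[6.3]{Lin18}) that $\br_P(c) \neq 0$ with $\br_P(c)$ lying under $b_P$, while $\br_D^P$ applied to $b_P$-components and the nonvanishing of $\br_D(c)$ pin down $(P,b_P) \leq (D, b_D)$. Concretely: $c$ is a $D$-stable idempotent with $\br_D(c) \neq 0$, so by \cite[6.3.3]{Lin18} (the characterization of inclusion of Brauer pairs via a $D$-stable idempotent with nonzero Brauer construction at the top) the block $b_P$ under $\br_P(c)$ satisfies $(P, b_P) \leq (D, b_D)$ automatically — this is precisely the mechanism by which the definition of almost-source idempotent in \cite[6.4]{Lin18} is set up. Once these inclusions are in hand, $c$ satisfies every clause of the definition of almost-source idempotent recalled in Section \ref{7}, and hence $A = c\cO G c = \ell\cO G\ell$ with $\ell = c$ is an almost-source algebra of $\cO G b$.

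**Main obstacle.** The only delicate point is the passage from ``$A(P)$ has a unique block'' to the Brauer-pair inclusion $(P, b_P) \leq (D, b_D)$ holding \emph{simultaneously and coherently for all $P$}. The uniqueness hypothesis immediately gives a well-defined $b_P$ for each $P$, but one must ensure these $b_P$ fit together into the inclusion chain $(1, b_1) \leq (P, b_P) \leq (D, b_D)$ rather than merely giving a disconnected family of blocks. The key fact making this work is that a $D$-stable idempotent $c$ with $\br_D(c) \neq 0$ generates a \emph{unique} maximal Brauer pair containing $(D, b_D)$ is impossible by maximality, so rather: the inclusion $(P, b_P) \le (D, b_D)$ is forced because $\br_D(c) \ne 0$ sits inside $\br_D(b_D)$, and for $P \leq D$ the relation $\br_D(b_P) \neq 0$ combined with $\br_D(b_P) b_D \neq 0$ (which follows from $\br_D(c) \le \br_D(b_P)$ and $\br_D(c) \le b_D$) gives exactly $(P, b_P) \leq (D, b_D)$ by the standard criterion \cite[6.3.3]{Lin18}. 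So the obstacle is really bookkeeping with Brauer pairs, and the lemma follows once this is laid out carefully.
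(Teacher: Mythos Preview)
Your core argument is correct and matches the paper's: the blocks of $\br_P(c)\,\FF C_G(P)\,\br_P(c)$ are exactly the nonzero $\br_P(c)f$ with $f$ a block of $\FF C_G(P)$, so the uniqueness hypothesis forces $\br_P(c)\leq b_P$ for a unique block $b_P$. The paper's proof is just this, preceded by the observation that $\br_P(c)\neq 0$ for every $P\leq D$ (since $A$ is a permutation $\cO D$-module and $\br_D(c)\neq 0$); you use this fact implicitly but never state it, and it is worth making explicit.

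Where you diverge from the paper is in your second paragraph and your ``main obstacle'': you devote substantial effort to establishing the Brauer-pair inclusions $(P,b_P)\leq(D,b_D)$. The paper does not do this at all, because in its recapitulation of the definition (Section~\ref{7}) the inclusion $(P,b_P)\leq(D,b_D)$ is presented as a \emph{consequence} (``Recall, we have\ldots'') of the conditions $\br_D(\ell)\neq 0$ and $\br_P(\ell)\leq b_P$, not as an additional requirement to be checked. So your discussion of coherence of the $b_P$ is superfluous for this lemma: once you have produced the $b_P$ with $\br_P(c)\leq b_P$, you are done by the paper's definition, and the Brauer-pair containments come for free from the standard theory you cite.
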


\begin{proof}
Since $A$ is a permutation
$D$-algebra and $\br_D(c) \neq 0$,
each $\br_P(c) \neq 0$. Let $b_P$
be a block of $\FF C_G(P)$ such
that $\br_P(c) b_P \neq 0$. Then
$\br_P(c) b_P$ is a block of
$\br_P(c) \FF C_G(P) \br_P(c)$.
By the uniqueness hypothesis,
$\br_P(c) \leq b_P$.
\end{proof}

We now prove Theorem \ref{1.4}.
Let $H$ and $c$ be as in
Theorem \ref{8.3}. Put $A = c \cO G c$
as an interior $D$-algebra. By part (3)
of Theorem \ref{8.3}, $A =\cO H c$.
By the other two parts of that theorem,
together with Proposition \ref{8.1}, $A$
is a uniform almost-source algebra
of $\cO H c$. Given $P \leq D$, then
$A(P)$ is isomorphic to the block
algebra $\FF C_G(P) \br_P(c)$. Hence,
via Lemma \ref{8.4}, $A$ is an
almost-source $D$-algebra of
$\cO G b$. The proof of
Theorem \ref{1.4} is complete.


\begin{thebibliography}{EMG}

\bibitem[1]{Cra11}
D.\ A.\ Craven, {\it The Theory of Fusion Systems},
(Cambridge University Press, 2011).

\bibitem[2]{Gel19}
M.\ Gelvin, An observation on the module
structure of block algebras, {\it Comm.\ 
Algebra} {\bf 47}, (2019), 5286-5293.

\bibitem[3]{GR15}
M.\ Gelvin and S.-P.\ Reeh, Minimal
characteristic bisets for fusion systems,
{\it J.\ Algebra}, {\bf 427} (2015), 345--374.

\bibitem[4]{HL00}
M.\ E.\ Harris, M.\ Linckelmann, Splendid
derived equivalences for blocks of finite
$p$-solvable groups, {\it J.\ London
Math.\ Soc.\ } (Ser.\ 2) {\bf 62} (2000), 85-96.

\bibitem[5]{Lin18}
M.\ Linckelmann, {\it The Block Theory of
Finite Group Algebras}, Vols. 1, 2,
(Cambridge University Press, 2018).

\bibitem[6]{Pui86}
L.\ Puig, Local fusions in block source algebras,
{\it J.\ Algebra} {\bf 104} (1986), 358--369.

\bibitem[7]{PZ07}
L.\ Puig, Y.\ Zhou, A local property of basic
Morita equivalences, {\it Math.\ Zeit.\ }
{\bf 256} (2007), 551-562.

\end{thebibliography}
\end{document}